\documentclass[12pt]{article}

\usepackage{amsmath,amssymb,amsthm,lscape,young,hyperref,cancel,makeidx}
 \usepackage{verbatim}

\usepackage{geometry}
\usepackage{graphicx}
\usepackage{epstopdf}
\usepackage{mathabx}
\usepackage[all,arc]{xy}

\setcounter{tocdepth}{4}
\setcounter{secnumdepth}{4}

\newtheorem{theorem}{Theorem}[section]
\newtheorem{proposition}[theorem]{Proposition}
\newtheorem{lemma}[theorem]{Lemma}

\theoremstyle{definition}

\theoremstyle{remark}
\newtheorem{remark}[theorem]{Remark}

\numberwithin{theorem}{section}

\font\cyr=wncyr10
\newcommand{\Sh}{\hbox{\cyr X}}
\renewcommand{\i}{\mathrm{i}}

\newcommand{\cD}{\mathcal{D}}


\def\eps{{\epsilon}}


\def\bE{{\mathbb {E}}}

\def\bZ{{\mathbb {Z}}}

\def\bR{{\mathbb {R}}}
\def\bC{{\mathbb {C}}}
\def\bP{{\mathbb {P}}}
\def\bO{{\mathbb {O}}}

\def\bT{{\mathbb {T}}}
\def\bI{{\mathbb {I}}}



\def\pB{{\mathcal B}}

\def\pE{{\mathcal E}}
\def\pF{{\mathcal F}}

\def\pI{{\mathcal I}}

\def\pL{{\mathcal L}}

\def\pO{{\mathcal O}}
\def\pP{{\mathcal P}}
\def\pQ{{\mathcal Q}}

\def\pU{{\mathcal U}}
\def\pV{{\mathcal V}}

\DeclareMathOperator{\diag}{diag}






\newcommand{\D}[2]{\frac{D #1}{d#2}}

\renewcommand{\ker}{\mathop{\mathrm{Ker}}\nolimits}



\begin{document}
\title{Monads for Instantons and Bows}

\author{Sergey A. Cherkis\thanks{
Department of Mathematics, University of Arizona, 
Tucson, AZ 85721-0089, USA, 
\tt cherkis@math.arizona.edu}
\and
Jacques Hurtubise\thanks{
Department of Mathematics,
McGill University,
Burnside Hall,
805 Sherbrooke St.W.,
Montreal, Que. H3A 2K6,
Canada,
\tt jacques.hurtubise@mcgill.ca}}
\date{}
\maketitle

\abstract{Instantons on the Taub-NUT space are related to `bow solutions' via a generalization of the ADHM-Nahm transform. Both are related to complex geometry, either via the twistor transform or via the Kobayashi-Hitchin correspondence. We explore various aspects of this complex geometry, exhibiting equivalences. For both the instanton and the bow solution we produce two monads encoding each of them respectively.  Identifying these monads we establish the one-to-one correspondence between the instanton and the bow solution.}

  \newpage\tableofcontents\newpage

\section{Introduction} 
The general paradigm in the understanding of any anti-self-dual (ASD)  field equation  on a compact K\"ahler manifold $X$ is given by the Kobayashi-Hitchin correspondence, which goes, roughly, as follows:
$$\begin{matrix} \begin{pmatrix} {\text{Solutions to the}}\\ {\text{anti-self-duality}}\\ \ \text{equations}\\ \ \text{on}\ X \end{pmatrix} & \leftrightarrow & \begin{pmatrix} \text{Holomorphic stable}\\ \ \text{vector bundles} \\ \ \text{on}\ X  \end{pmatrix}\end{matrix}$$
The anti-self-duality equations contain the conditions for integrability for a complex structure; thus the rightward arrow above is simply a forgetful map. The leftward arrow involves solving a variational problem, and the solubility of this problem requires stability. This, being the rough picture, in particular cases, there are often associated auxiliary structures, which modify both the equations and the stability condition.

When the base manifold $X$ is hyperk\"ahler, there is a third element, which is the twistor space $Z$ of the manifold $X$. This space is diffeomorphic to the product $X\times \bP^1$; the $\bP^1$ parametrizes the various complex structures of the hyperk\"ahler structure, and the projection  $X\times \bP^1\rightarrow \bP^1$ is holomorphic. The correspondence expands to:

$$\begin{matrix} \begin{pmatrix} {\rm \ Sols\ to\ the}\\ {\rm \ ASD }\\ \ {\rm equations}\\ \ {\rm on\ X}  \end{pmatrix} & \leftrightarrow & \begin{pmatrix} {\rm \ Holomorphic}\\ \ {\rm vector\ bundles} \\ \ {\rm on\ X}\times \bP^1 \\ \ {\rm (plus\ conditions)}\end{pmatrix}& \leftrightarrow & \begin{pmatrix} {\rm \ Holom.\ stable}\\ \ {\rm vector\ bundles} \\ \ {\rm on\ X} \end{pmatrix}\end{matrix}$$
The righthand map is simply restriction to a fibre. 

We are interested in the picture that holds for the Taub-NUT manifold. This manifold is not compact; the boundary conditions  will dictate the compactification geometry that appears on the holomorphic side of the correspondence.  The purpose of this paper  is to exhibit the trio of data above in the case of the Taub-NUT manifold, and to link it to another trio of data, by showing the equivalence between ASD instantons on the Taub-NUT manifold and the `bow solutions' of \cite{Cherkis:2010bn} (containing certain solutions to Nahm's equations). A persistent theme throughout is the encoding of the structures into various versions of a monad, an algebro-geometric structure that gives the relevant bundles as a cohomology: one has a sequence of  bundles
$$ A{\buildrel{\alpha}\over{\longrightarrow}}B{\buildrel{\beta}\over{\longrightarrow}}C$$
with $\beta\circ\alpha = 0$; the relevant bundle is the quotient
$\ker(\beta)/{\mathrm {Im}}({\alpha})$

In its holomorphic version, the instanton-bow equivalence has on the  instanton side a holomorphic bundle, though this time equipped with some extra data coming from the boundary behaviour. On the bow side, the holomorphic data is a solution to (part of)  Nahm's equations, along, again, with some extra linear data; these are versions of the Nahm complexes introduced by Donaldson for $SU(2)$ monopoles \cite{Donaldson:1985id}, and extended to other gauge groups in \cite{Hurtubise:1989wh}, and then to the case of calorons in \cite{Charbonneau:2006gu}. This  expands the correspondence
$$\begin{matrix} \begin{pmatrix} \text{Solns to}\\ \text{the ASD }\\ \ \text{equation}\\ \ \text{on}\ X  \end{pmatrix} & \leftrightarrow & 
\begin{pmatrix} \text{Holom.}\\  \text{vector bundles} \\ \ \text{on}\ X\times \bP^1 \\ \ \text{(+\ conditions)}\end{pmatrix}& \leftrightarrow & 
\begin{pmatrix} \text{Holom. stable}\\ \ \text{vector bundles} \\ \ \text{on}\ X \end{pmatrix}\\ \\
\quad \text{Up}\uparrow\ \ \downarrow\text{Down}&& \updownarrow&&\updownarrow\\ \\
\begin{pmatrix} \text{Bow Solns:}\\ \text{Solns to Nahm's Eqs}\\ \text{+\ Linear Data} \end{pmatrix} & \leftrightarrow & \begin{pmatrix} {\rm \ Spectral }\\ \ {\rm data\  } \\ \ {\rm on\ }  T\bP^1 \\ \ {\rm (+\ conditions)}\end{pmatrix}& \leftrightarrow & \begin{pmatrix} {\rm \ Holomorphic\  }\\ \ {\rm \ bow \ complex} \\ \ {\rm data\ on\ \bP^1}\end{pmatrix}
\end{matrix}$$
In the various portions of this diagram, we shall encounter monads; these will encode our objects as subquotients of simple objects, typically related by  matrices satisfying certain constraints. This point of view \cite{Horrocks, ADHM,Nahm:1979yw,Nahm1983} is quite fruitful. The right hand side of the diagram is holomorphic, and somewhat simpler. We will first explore this part of the picture, but beforehand, give more precise definitions of the picture's components.  

We will throughout this paper, concentrate on the case of $SU(2)$ and $U(2)$ instantons. The cases of unitary groups of higher rank has some supplementary complications, which in essence are already present in the study of $SU(n)$ and $U(n)$ monopoles, as in \cite{HurtubiseMurray}. One can find a treatment in  \cite{Takayama:2016}. The moduli spaces of bows and instantons on the Taub-NUT space are isometric to the Coulomb branches of the moduli spaces of vacua of quantum field theories \cite{Nakajima:2016guo}.  This leads to potential applications of our work in quantum field theory \cite{Nakajima:2016guo, Braverman:2016wma},  gauge theory mirror symmetry \cite{Gaiotto:2008ak,Cherkis:2011ee}, brane dynamics \cite{Witten09}, and geometric Langlands correspondence \cite{Witten:2009at,Braverman:2017ofm}.

\section{Objects of Study}

\subsection{The Taub-NUT manifold}

The Taub-NUT manifold $X_0$ is a hyperk\"ahler manifold diffeomorphic to $\bR^4$.
Its geometry is   closely tied to that of the Hopf map $\bR^4\xrightarrow[]{\pi} \bR^3$ (a circle bundle away from the origin): fixing a complex structure and linear complex coordinates $(\xi, \psi)$ on $\bR^4\simeq\mathbb{C}^2$, the Hopf map is given by 
$$\pi:(\xi,\psi)\mapsto (t_1+it_2, t_3) = \left(\xi\psi, \frac{|\psi|^2-|\xi|^2}{2}\right).$$

The  fibres of this map  are orbits under the action by complex scalars of unit length: $(\xi,\psi)\mapsto(\lambda\xi,\lambda^{-1}\psi),\ |\lambda|=1.$ Away from the origin these fibres are circles. 

The Taub-NUT metric has  these orbits tending  asymptotically to circles of a constant length: explicitly there are local coordinates\footnote{
The local coordinate $\theta$ is identified with  $ \theta+2\pi$ and is a local coordinate over a cone in $\mathbb{R}^3$ spanned by a connected contractible region in its unit sphere.  Note, that although $\theta$ and $\vec{\omega}\cdot d\vec{t}$ are local, the one-form $d\theta+\vec{\omega}\cdot d\vec{t}$ is a global one-form dual to the isometry generating vector field $\frac{\partial}{\partial\theta}.$
} 
$(t_1,t_2, t_3,\theta)\in \bR^3\times [0,2\pi)$ in which the action of $S^1$  by a linear shift in $\theta$ is isometric,  and the metric is locally given by the Gibbons-Hawking ansatz:
\begin{align}\label{TNmetric}
ds^2 = V(t) d\vec{t}\phantom{|}^{2} + \frac{(d\theta+\vec{\omega} \cdot d\vec{t}\,)^2} {V(t)},
\end{align}
with 
$$V(t) = \ell + \frac{1} { 2|\vec{t}|},$$
where $\ell>0$ is a fixed parameter determining the asymptotic size of the $S^1$
and the local one-form $\vec{\omega}d\vec{t}$ appearing in the metric is related to $V$ by 
$\frac{\partial}{\partial t_i}  V = \epsilon_{ijk}\frac{\partial}{\partial t_j}\omega_k.$

  Away from the origin, there is a complex version of this picture, when one fixes one of the complex structures on $X_0= \bR^4$: one can project $X_0\setminus\{0\}$ further to the unit sphere, and the orbits there are given by the action of $\bC^*$; it is simply the map 
  $\bC^2\backslash 0 \rightarrow \bP^1$.
  
  \subsubsection{The twistor space}\label{Sec:TwistorSpace}
    
  The Euclidean three-space has a minitwistor space given by the total space $\bO(2)$ of the bundle 
$\pO(2)$ over $\bP^1$. If $\zeta$ is the natural coordinate on $\bP^1$ and $\eta\frac{\partial}{\partial\zeta}\in T\mathbb{P}^1$, then $\eta$ is the corresponding fibre coordinate on $\bO(2).$  The twistor correspondence between the points $(t_1,t_2,t_3)$ in $\mathbb{R}^3$ and real sections of $\pO(2)$ called the twistor lines is given by 
$$\eta = (t_1+\i t_2) - 2  t_3\zeta - (t_1-\i t_2)\zeta^2.$$
The variable $\zeta$ parametrises the oriented directions in $\bR^3$; it also parametrizes the complex structures on the Taub-NUT manifold. These twistor  lines are invariant under the real structure involution $(\eta, \zeta)\mapsto (-\bar\eta/\bar\zeta^2,-1/\bar\zeta)$.

The space $\bO(2)$ has over it a family of line bundles $L^\ell$,  with exponential transition function $\exp(-\ell\eta/\zeta)$, so that one has coordinate patches ${V_0= \{\zeta\neq\infty\}}$ and   $V_1=  \{\zeta\neq 0\}$ on $\bO(2)$, with respective coordinates $(\mu, \eta,\zeta)$ and  $(\mu',\eta',\zeta')$ related by
$$(\mu', \eta',\zeta') = ( \exp(-\ell \eta/\zeta) \mu, -\eta/\zeta^2, 1/\zeta).$$ 
 The twistor space $Z_0$ of the Taub-NUT manifold $X_0$ is the sub-bundle of conics $\xi \psi=\eta$ in the bundle $L^\ell(1) \oplus L^{-\ell}(1)$ over $\bO(2)$, with $\xi, \psi$ the tautological sections of  $L^\ell(1),L^{-\ell}(1)$, see \cite{Hitchin79}. One has projections:
\begin{align} 
Z_0&\rightarrow \bO(2) \rightarrow \bP^1, \\
( [\xi,\psi], \eta,\zeta)&\mapsto (\eta,\zeta)\mapsto\nonumber \zeta.
\end{align}
Over $\eta\neq 0$, the fibre of the first map is a $\bC^*$; over $\eta=0$, the fibre is   a chain of two complex lines.

 There is a  fibrewise compactification $Z'_0$ of $Z_0$ over $\bO(2)$ that can be  obtained in three ways:

1) From $\bP(L^\ell(1)  \oplus \pO) $ by blowing up the line $\xi =0, \eta =0$. One obtains a bundle of $\bP^1$s over the complement of $\eta =0$, and over $\eta=0$, there is a bundle of chains of two $\bP^1$s intersecting at a point.
\medskip

1') From $\bP(L^{-\ell}(1)  \oplus \pO) $ by blowing up the line $\psi=0, \eta =0$. One obtains a bundle of $\bP^1$s over the complement of $\eta =0$, and over $\eta=0$, there is a bundle of chains of two $\bP^1$s intersecting at a point.
\medskip

2) As a family of quadrics $\xi \psi= \eta  z^2$ in $\bP(L^\ell(1) \oplus L^{-\ell} (1) \oplus \pO)$ over $\bO(2)$.
\medskip

These are isomorphic; for example, one can get from 1) to 1') by the map 
\begin{align} 
\bP(L^\ell(1)  \oplus \pO) &\rightarrow \bP( L^{-\ell}(1) \oplus \pO)= \bP(\pO(2) \oplus L^{\ell}(1)),\nonumber\\
 ( [\xi,a], \eta,\zeta)&\mapsto (  [ \psi, \hat a], \eta,\zeta))= ([\eta a , \xi], \eta,\zeta)).\nonumber 
 \end{align}

We now have two spaces $Z'_0$ and $ Z_0$ over $\bO(2)$. The complement $ {Z'_0}\backslash Z_0$ is a  union of two divisors $\Gamma_0$, defined by $\psi^{-1} = 0$, and $\Gamma_\infty$, defined by $\xi^{-1} = 0$; each of these divisors maps isomorphically to $\bO(2)$. The divisor defined by $\eta=0$ is the union of two $\bP^1$-bundles $\Delta_\psi, \Delta_\xi$ over the $\zeta$-line, defined respectively on $Z_0$  by $\psi = 0, \xi =0.$ Note, that on $Z_0'$, $\Delta_\psi + \Gamma_\infty$ is linearly equivalent to $\Gamma_0$; likewise, $\Delta_\xi + \Gamma_0$ is linearly equivalent to $\Gamma_\infty.$

\subsubsection{Fixing a complex structure} \label{twist}
The twistor space $Z_0$ of the Taub-NUT is diffeomorphic to $X_0\times \bP^1$.  Let us now fix a complex structure, say $\zeta = 0$, i.e. restrict our attention to $X_0 = \{\zeta= 0\} \subset Z_0$. This amounts to  fixing the vertical direction in $\bR^3$ corresponding to $\zeta=0$. 

Choosing such a direction gives the parallel family of lines $\bR_\eta=\{ (t_1,t_2,t_3)\in \bR^3| \eta= t_1+\i t_2\}$, and, over them in $X_0= \bR^4=\mathrm{TN}\rightarrow\mathbb{R}^3$, for $\eta\neq 0$, the family of cylinders (conics) $\bC^*_\eta$ given in complex coordinates $(\xi,\eta)\in \bC^2=\bR^4$ by $\eta = \xi\psi$. For $\eta = 0$, the fibre is an intersecting pair  of two complex lines.   
 
On the partial compactification $X'_0$, there are two divisors, which we  denote
$C_0$ and $C_\infty$, the proper transforms of the divisors $\xi =0$ and $\xi =\infty$. The fibre over $\eta = 0$ is two  projective lines  $D_\xi$ (which intersects $C_0$)  and $D_\psi$ (which intersects $C_\infty$).  For $\eta\neq 0$, the curves $\bC^*_\eta$ extend to projective lines $B_\eta$.

This can be compactified to a surface $X$. The way we choose to do this is to  add on a chain of two projective lines $F_\xi\cup F_\psi$ over the point $\eta =\infty.$ Alternately, to obtain the same surface $X$, take the surface $\bP^1\times \bP^1$, with coordinates $\eta, \xi$, and blow up two points  $(0,0)$ and $(\infty,\infty)$.

We then have a hexagon of $-1$ curves: 
\begin{itemize}
\item $D_\psi$: $\eta =0$, $\psi = 0$, $\xi$ varying, self intersection $-1$;
\item $D_\xi$: $\eta =0$, $\xi = 0$, $\psi$ varying, self intersection $-1$;
\item $C_0$: $\eta$ varying, $\xi = 0$, $\psi = \infty$, self intersection $-1$;
\item $F_\psi$: $\eta= \infty$, $\xi$ varying, $\psi =\infty$, self intersection $-1$;
\item $F_\xi$: $\eta= \infty$, $\psi$ varying, $\xi =\infty$, self intersection $-1$;
\item $C_\infty$: $\eta$ varying, $\xi = \infty$, $\psi=0$, self intersection $-1$.
\end{itemize}
\begin{figure}[htb]
\begin{center}
    \includegraphics[width=0.35\textwidth]{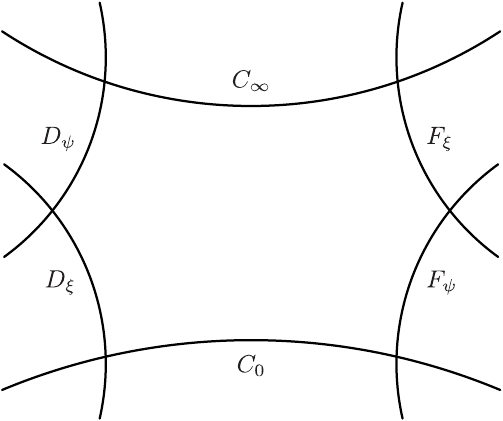}
\label{Inter}
\end{center}
\end{figure}
These curves intersect each other in a cycle, in the order given, with multiplicities one. The complement of $X_0$ in $X$ is the union
$C_0\cup F_\xi  \cup F_\psi \cup C_\infty$. The divisor ot $\eta$ is $D_\psi+D_\xi - F_\psi-F_\xi$; that of $\xi$ is $D_\xi-F_\xi+C_0-C_\infty$; and that of $\psi$ is $D_\psi-F_\psi-C_0+C_\infty$.

\subsection{Instantons on the Taub-NUT}\label{Sec:ITN}

\subsubsection{Charges and degrees}

We consider $SU(2)$ instantons on the Taub-NUT. These are $SU(2)$ bundles on $X_0$, equipped with  an  $SU(2)$ connection $\nabla$, whose curvature has finite $L^2$ norm and  satisfies the anti-self-duality equation $*F=-F$. We have one additional assumption that there is a ray in the base $\mathbb{R}^3$ of the fibration $X_0\rightarrow\mathbb{R}^3$ along which the holonomy of the connection $\nabla$ around the circle fibre is asymptotically generic, i.e. with distinct eigenvalues at least $\epsilon>0$ apart.  As proved in \cite[Thm.22]{Cherkis:2016gmo}, this implies that the asymptotic holonomy around the circle fibre is the same in all directions, up to conjugation, with its eigenvalues
\begin{align}
&\exp\left(\mp 2\pi\i\mu(\vec{t}\,) \right);&
&\text{with}&
\mu(\vec{t}\,) &=\frac{\hat{\lambda} }{\ell}-\frac{\hat{\rho}/\ell}{2|\vec{t}\,|}+O(|\vec{t}\,|^{-2}),
\end{align}
and some real constants $\hat{\lambda}$ and $\hat{\rho}.$ 
Note, that according to this the quantity $\hat\lambda/\ell$ is defined up to an integer.  
However, the next order term $\hat{\rho}/(2\ell{|\vec{t}\,|})$ is a invariantly defined.     Furthermore, $\hat{m}:=\hat{\rho}- \hat{\lambda} /\ell$ is an integer, in essence a first Chern class of the line bundle $\pO(\hat{m})$ one gets on a two-sphere near infinity by parametrizing the bundle over the fibers by sections in which the $\theta$-component of the connection is $i\mu$. This depends on the $\mu$ chosen, so that $\lambda/\ell, m$ are only defined up to integer shifts:
  $(\lambda/\ell, m)\mapsto (\lambda/\ell +N, m -N)$.
  (Note, that for the Hopf fibration $\pi:S^3\rightarrow S^2$ the lift $\pi^*(\mathcal{O}(m))\rightarrow S^3$ of any line bundle $\mathcal{O}(m)$ on the two sphere is trivial. Another way of saying this is that any trivialization of the line bundle over $S^3$ along each fiber of the Hopf fibration represents that bundle as a pullback $\pi^*(\mathcal{O}(m)).$ 
Changing these trivializations shifts the value of $m$ by an integer.)
We normalise by \footnote{
Here $\{a\}=a-\lfloor a\rfloor$ signifies the difference between $a$ and the floor of $a$, i.e. 
the largest integer $\lfloor a\rfloor$ not exceeding $a$.} 
$\lambda :=\ell\{\hat{\rho} \}$ and $m :=\lfloor\hat{\rho}\rfloor.$  Our genericity condition ensures that   $\lambda/\ell$ is neither an integer, nor a half integer. If $\lambda/\ell<1/2$, we stop. If $\lambda/\ell>1/2$, we shift further by $\lambda/\ell\mapsto \lambda/\ell-1$, and so $-\lambda/\ell\mapsto -\lambda/\ell +1$. There is now one asymptotic eigenvalue of the connection between $0$ and $1/2$, and it is that one that we call $\lambda/\ell$.
  
 Using the trivialization corresponding to this choice, the asymptotic form of the connection \cite[Thm. 22]{Cherkis:2016gmo} is 

$$\nabla_\theta = \frac{d}{d\theta} + \i \ \frac{\lambda -\frac{m }{2|\vec{t}\,|}}{\ell+\frac{1}{2|\vec{t}\,|}} {\rm diag}(-1,1)  + O(|\vec{t}\,|^{-2}),$$
with $\lambda/\ell\in(0,\frac12)$ and $m\in \bZ$ are now uniquely defined. The integer $m$ will be called the {\it magnetic charge}.

The process of going to this asymptotic  trivialization is of interest in itself.  Its various ambiguities, their physics interpretation, and associated topological invariants are discussed from the string theory point of view in \cite{Witten09}. Let us begin by noting that our bundle over $\bR^4$ is trivial, since $\mathbb{R}^4$ is contractible. The eigenlines of the holonomy over the sphere near infinity have distinct  eigenvalues, tending to constants at infinity; choosing one of them gives a map $S^3\rightarrow \bP^1(\bC)$,
 i.e. an element of $\pi_3(S^2) = \bZ$, as well as determining an eigenbasis. Under the Hopf map $S^3 = SU(2)\rightarrow S^2= \bP^1(\bC)$, one has an isomorphism $\pi_3(S^3) = \pi_3(S^2)$, and so this passage to an eigenbasis (a diagonal basis) can be made effective, as a map from  (a neighbourhood of) the three-sphere  at infinity to $SU(2) = S^3$. We thus have a natural topological invariant $n_0$, which we will call the {\it instanton number}, given by this element of $\pi_3(S^3)$. We note that further transformations, in an already diagonal basis, are essentially achieved by maps $S^3\rightarrow S^1$, and so are homotopically trivial. Thus the instanton charge $n_0$ remains invariant under these gauge transformations and is, therefore, well defined.
  
\subsubsection{Abelian solutions and compactification} 
We note that, unlike flat space, the Taub-NUT comes equipped with a family of $U(1)$ instantons which are not flat. One has a basic instanton with a globally defined connection one-form
$$a_s =  s \frac{d\theta +\omega}{V},$$
for $s\in \bR$. Again, choices of trivialization  at infinity allow integer shifts in $s$, so that one can  enforce $s\in (-\ell/2,\ell/2]$, while introducing   a magnetic charge, gives a family of connections of the form:
 $$ a_{s,m} = (s - \frac{m}{2 |\vec{t}\,|}) \frac{d\theta +\omega}{V} +m \omega.$$
(Note that $a_s = a_{s,0}$.) Now let us consider the   complex structure in the hyperk\"ahler family of the Taub-NUT, given as the surface $X_0= \bC^2$, with coordinates $(\xi,\psi)$. As we saw, $X_0$ was fibered by a family of curves $L_\eta$ over $\bC$ given by $\xi\psi = \eta$; for $\eta\neq 0$ this is a cylinder $\bC^*$, and for $\eta= 0$ this is a union of two lines $\xi=0$ and $\psi = 0$. These compactify in $X_0'$ by adding two points to each fibre, corresponding respectively to   limits $\xi\rightarrow 0$, $\psi\rightarrow 0$.

A  connection $ a_{s,m} $ defines an  integrable $\overline\partial$-operator, and so one gets from it a  holomorphic bundle over $X_0$; we want to see how it  extends to $X_0'$. Let us consider what happens near $\xi = 0$. One does this, following, e.g., Biquard \cite{Biquard97}. The leading asymptotic of this unitary connection on $\xi\neq 0$ has the form $(s/\ell) d\theta$, where $\theta = \arg(\xi)$; one can pass to a holomorphic gauge by a gauge transformation $g = (\xi\overline\xi)^{s/2\ell}$, which we use as a clutching function to extend the bundle to $\xi = 0$.  At $\psi = 0$ (and so, generically, at $\xi = \infty$), the clutching function, in a similar fashion is $g = (\xi\overline\xi)^{-s/2\ell}$. The end result is a   line bundle over $X_0'$. The result is trivial over each $B_\eta$ for $\eta$ large, as the induced curvature is small; it is  then trivial over each line $B_\eta,\eta\neq 0$.  We then extend to $X$ in the following fashion: we extend from $C_\infty\cap X_0' = \bC $ to $ C_\infty=\bP^1$ as a trivial bundle, and choose a trivialization; taking this trivialization, we can glue in a trivial bundle given on a neighbourhood of the fibres $F_\psi\cup F_\xi$ at infinity, and so get our bundle on all of $X$. As in \cite[Sec.~5]{Cherkis:2016gmo},   and as we have degree 0 on $C_\infty, F_\psi,F_\xi$, this means that we have degree $-m$ along $C_0$.
Line bundles on $X$, a rational surface, form a discrete set, and are classified by their first Chern classes; if one is looking for a line bundle with degree 0 on $B_\eta,\eta\neq 0 , F_\psi, F_\xi$, and degree $-m$ on $C_0$, the only candidate is the line bundle $\pO(-mD_\xi)$; it has degree $ -m$ on $D_\psi$, and degree $ m$ on $D_\xi$.

\subsubsection{Compactifying the $SU(2)$ instanton}

For the $SU(2)$ instanton, we proceed as in the Abelian case, using the asymptotic trivializations in which the  components of the connection are  diagonal up to order $|\vec{t}\,|^{-1}.$  What is new here along $C_0, C_\infty$ is that the $ \lambda_\pm$ eigenspaces give holomorphic sections in the unitary trivialisations with very different growth rates as one goes into $\xi = 0$; the $\lambda_-$ eigenspace gives sections which decay, whereas the $\lambda_+$ eigenspace gives sections which blow up. It is thus only the negative eigenspace  which gives a meaningful subspace, and so the structure one has in the bundle over $C_0$ is a flag rather than a pair of subbundles. Likewise, at infinity, one has a flag, this time corresponding to the $\lambda_+$ eigenspace (of decaying solutions) consisting of a subbundle   over $C_\infty$. Compactifying to all of $X$, as for the line bundle,  one has along $C_\infty$ a trivial bundle, with a trivial subline bundle, and along $C_0$, a bundle with a subbundle of degree $-m$. Finally, we note that the bundle over $L_\eta$ for $\eta$ large is trivial, since a positive subbundle (destabilising subbundle) requires curvature, which is tending to zero.

We thus obtain a bundle over $X$, of degree zero, trivial on the generic lines $B_\eta$, as well as on $C_\infty$, and with flags (subline bundles) of degree $-m$ and $0$ along $C_0$ and  $C_\infty$ respectively. It has  a second Chern class $k$, which, one can show to be  the number of {\it jumping lines} (number of lines at which the holomorphic structure jumps, i.e. is non trivial) in the ruling $B_\eta$, counted with multiplicity. This then tells us that $k$ must be positive.

It is interesting to consider the Abelian $SU(2)$ solution 
$A_{s,m}=\left(\begin{smallmatrix}
a_{s,m}&0\\
0&-a_{s,m}
\end{smallmatrix}\right)
$ 
given in the complex world after compactification as a sum $\pO(mD_\psi)\oplus \pO(-mD_\psi)$ of two $U(1)$ instantons of opposite magnetic charge. This has second Chern class $m^2$; on the other hand, we had already had an instanton number  $n_0$ for $A_{s,m}$, given as   the  degree of the gauge transformation on the infinity of $\bR^4$ that  makes the connection `Abelian near infinity', as the connection is already Abelian,  $n_0(A_{s,m})= 0$.   
 
 \subsection{The bow}
    \begin{figure}[htb]
\begin{center}
    \includegraphics[width=0.45\textwidth]{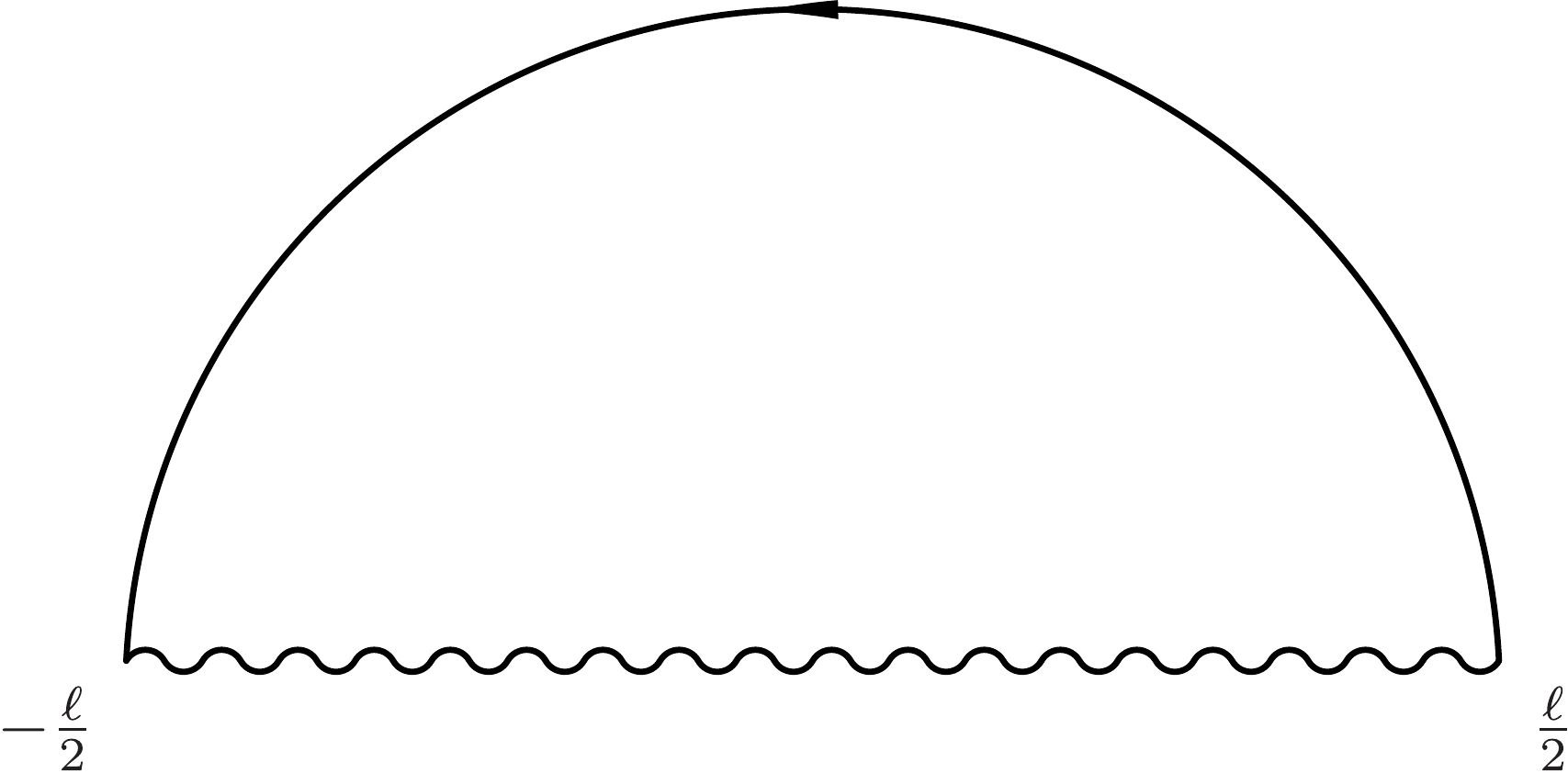}
\caption{The Taub-NUT bow.}
\label{Fig:BowTN}
\end{center}
\end{figure}  
 Bows are generalizations of quivers \cite{Cherkis:2008ip} consisting of oriented intervals and oriented edges, each edge beginning at the end of some interval and ending at the beginning of some (possibly the same) interval. The bow 
relevant for the Taub-NUT space of Eq.~\eqref{TNmetric} consists of a single interval $\left[-\frac{\ell}{2},\frac{\ell}{2}\right]$ parameterized by $s$ and a single edge connecting its ends, with its head $h$ at $s=-\frac{\ell}{2}$ and its tail $t$ at $s=\frac{\ell}{2}$, see Fig.~\ref{Fig:BowTN}.

Any {\em bow representation} \cite{Cherkis:2010bn} corresponding to the $SU(2)$ instanton, studied above, consists of 
\begin{itemize}
\item
two points at $s=\lambda_-:=-\lambda$ and at $s=\lambda_+:=\lambda>0$ on the bow interval, 
\item 
Hermitian bundles $N_0\rightarrow[-\ell/2,\lambda_-]\cup[\lambda_+,\ell/2]$ and $N_1\rightarrow[\lambda_-,\lambda_+]$ of respective ranks $k$ and $k+m,$
\item
chosen Hermitian injections $i_-: N_0|_{\lambda_-}\rightarrow N_1|_{\lambda_-}$ and $i_+: N_0|_{\lambda_+}\rightarrow N_1|_{\lambda_+}$, for $m\geq 0$, or $i_-: N_1|_{\lambda_-}\rightarrow N_0|_{\lambda_-}$ and $i_+: N_1|_{\lambda_+}\rightarrow N_0|_{\lambda_+}$, for $m\leq 0$;
\item
if $m=0,$ one-dimensional\footnote{In general, for higher rank instanton structure group, the dimension of $W_\lambda$ equals to the multiplicity of the corresponding $\lambda$ eigenvalue of the holonomy at infinity.} auxiliary Hermitian spaces $W_{\lambda_-}$ and $W_{\lambda_+}$.
\end{itemize}
\begin{figure}[htb]
\begin{center}
    \includegraphics[width=0.45\textwidth]{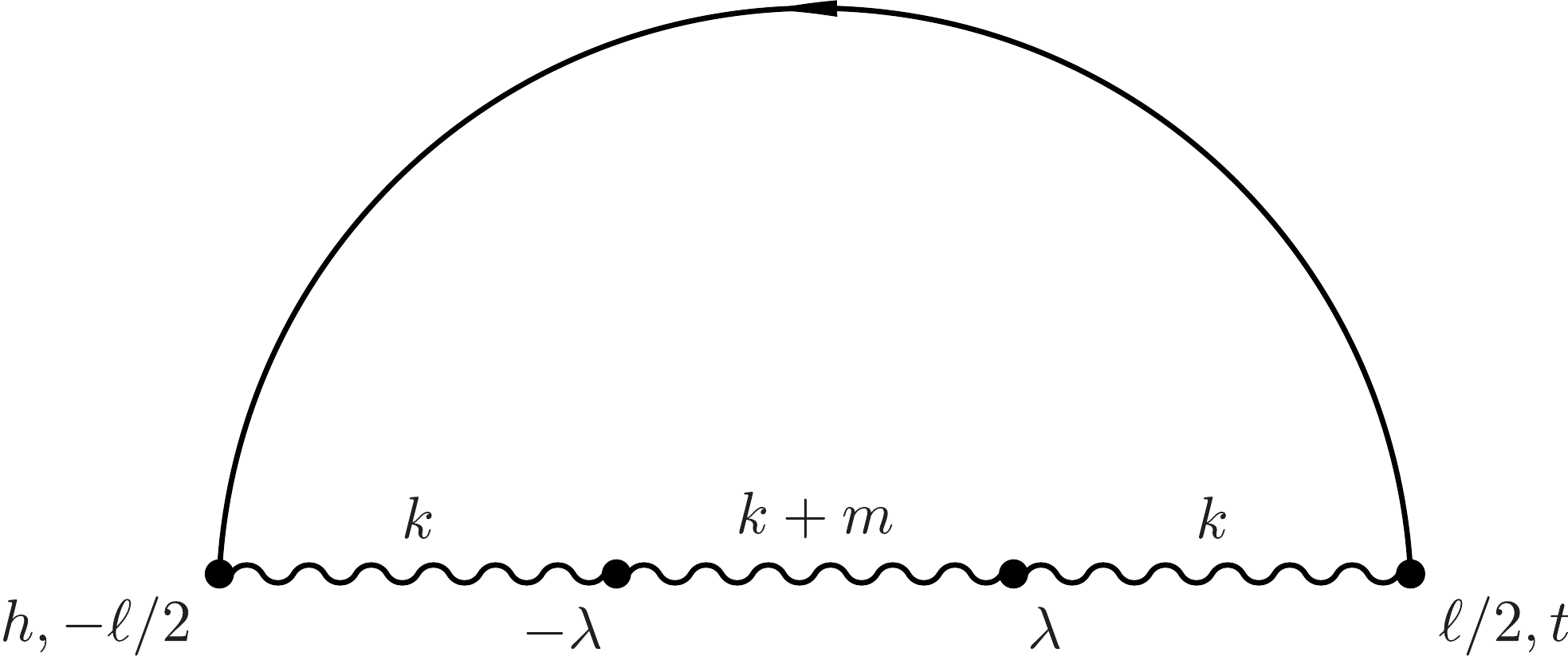}
\caption{Bow representation for the $SU(2)$ instanton on the Taub-NUT.}
\label{Fig:Bow}
\end{center}
\end{figure}

The bow representation has an affine space of {\em bow data} associated with it; this space is a direct sum of 
\begin{description}
\item[Nahm data] associated with the interval.  It is an affine space of quadruplets $(\nabla_s, T_1,T_2,T_3)$ consisting of a connection $\nabla_s$ and three endomorphisms $T_1,T_2,$ and $T_3$ on the Hermitian bundles $N_0$ and $N_1,$
\item[Bifundamental data] associated with the edge, is a vector space of pairs $(B_{t,h},B_{h,t})$ of linear maps 
\begin{align*} 
B_{t,h} : &N_0|_{-l/2}\rightarrow N_0|_{l/2},& 
B_{h,t}:&N_0|_{l/2} \rightarrow N_0|_{-l/2}.
\end{align*}
\item[Fundamental data] associated with each $\lambda$-point $s=\lambda_\pm$ is present only if $m=0.$ It is a vector space of pairs 
$(I_\pm,J_\pm)$ of maps 
\begin{align*}
I_+: &W_{\lambda_+}\rightarrow N_0|_{\lambda_+}=N_1|_{\lambda_+},&
J_+: &N_0|_{\lambda_+}=N_1|_{\lambda_+}\rightarrow W_{\lambda_+},\\
I_-: &W_{\lambda_-}\rightarrow N_0|_{\lambda_-}=N_1|_{\lambda_-},&
J_-: &N_0|_{\lambda_-}=N_1|_{\lambda_-}\rightarrow W_{\lambda_-}.
\end{align*}
\end{description}
The Nahm data satisfy certain analytic conditions as well as matching conditions spelled out in \cite{Second}.   Among all bow data of a given representation of particular importance are the {\em bow solutions}.
  These can be introduced via the hyperk\"ahler reduction (by the action of the gauge group) as the data satisfying the moment map conditions. Equivalently, the data has a Dirac-type operator associated to it and the solution is the data satisfying the  condition of reality of the square of the associated bow Dirac operator \cite{Cherkis:2008ip,Cherkis:2010bn}.  Here, we simply state  these conditions.  These will involve\footnote{
  These conditions can be obtained as a result of the direct `Down transform' of 
\cite{Third}, however, here we take an alternative approach and prove that the monad arising from the bow solution corresponds to that of the instanton.}
\begin{enumerate}
\item
the conditions in the interior of the intervals, given by the Nahm equations,
\item  
the matching conditions at the $\lambda$-points, and
\item 
the conditions at the ends $s=\pm\ell/2$ of the bow interval.  
\end{enumerate}

 \subsubsection{Nahm's equations}

Nahm's equations are ordinary differential equations, discovered by Nahm in his early work on monopoles \cite{Nahm1983}. They relate three functions $T_i(s), i=1,2,3,$ on the line with values in the Hermitian $n\times n$ matrices by 
$\i\frac{dT_1}{ds} = [T_2,T_3],$ 
$\i\frac{dT_2}{ds} = [T_3,T_1],$ 
$\i\frac{dT_3}{ds} = [T_1,T_2].$
One can put in a gauge freedom by replacing the derivative $\frac{d}{ds}$ with a $u(n)$  covariant  derivative
$\nabla$; the equations become 
\begin{equation} \i\nabla T_i =\sum_{j,k} \epsilon_{ijk} T_j T_k.\end{equation}
These are reductions to one dimension of the anti-self-duality equations on $\bR^4$.

One can rewrite these equations with a spectral parameter, which is in fact the twistor parameter $\zeta\in\mathbb{P}^1.$ Setting 
\begin{align*}
A^N(\zeta,s) &= T_1 +\i T_2 -2 T_3\zeta - (T_1-\i T_2)\zeta^2,&
M^N(\zeta,s) & =  -T_3- (T_1-\i T_2)\zeta,
\end{align*}
one has a Lax pair for $\zeta\neq\infty.$ 
Alternatively, one can consider the Lax pair $A^S:=-A^N/\zeta^2$ and $M^S:=M^N-A^N/\zeta$ for $\zeta\neq 0,$ so that
\begin{align*}
A^S(\zeta,s) &= T_1 -\i T_2 +2T_3\frac{1}{\zeta} - (T_1+\i T_2)\frac{1}{\zeta^2},&
M^S(\zeta,s) & =  T_3- (T_1+\i T_2)\frac{1}{\zeta}.
\end{align*}
In either case, the Nahm equations are equivalent to  the Lax equation
$$[\nabla+M(\zeta, s),A(\zeta,s)]  = 0.$$
The moduli of solutions to Nahm's equations encode many different moduli spaces of solutions to the anti-self-duality equations; see e.g. \cite{Jardim} for a review.  Of particular importance are the boundary conditions.  The ones we will want to study arise naturally from the bow and involve the fundamental and bifundamental data. These solutions of the Nahm equations, together with the matching fundamental and bifundamental linear maps comprise the {\em bow solutions} of \cite{Cherkis:2008ip,Cherkis:2010bn}.

\subsubsection{Boundary conditions: Fundamental and Bifundamental Data}\label{Bow}
A bow solution is a decuplet $(\nabla, T_1, T_2, T_3, B_{t,h},B_{h,t},I_-,J_-,I_+,J_+)$ satisfying the following conditions.
\medskip

\noindent {\it Nahm conditions (associated to the subintervals)} 

 \begin{itemize}
\item 
On the Hermitian bundle $N_0$ of rank $k$ over the intervals $[-l/2,  \lambda_-)\cup (\lambda_+, l/2]$, a solution $A^0(\zeta,s)=A^N(\zeta,s)$ to the Nahm equations which  is smooth.
\item 
On the Hermitian bundle $N_1$ of rank $k+m$ over the interval
$(\lambda_-, \lambda_+)$, a solution $A^1(\zeta,s)=A^N(\zeta,s)$ to the Nahm equations is smooth.
\item 
The connection matrices of $\nabla$ are smooth in the interior of both intervals and  have finite limits at the boundary points.
\end{itemize}
\bigskip

\noindent {\it Fundamental conditions (associated to the $\lambda$-points)} 
\medskip

\noindent  {\it When $m>0$:}
 \begin{itemize}
 \item At both boundary points $\lambda_\pm$, the solution $A^N(\zeta,s)$ has a one-sided limit from the `small' (rank $k$) side, and is analytic near the boundary on the `large' (rank $k+m$) side, with at most a simple pole at the boundary point.
 \item   At both boundary points $\lambda_\pm$, an injection
$i_\pm\colon N_0\rightarrow N_1$, respecting the unitary structures,  decomposes $N_1$ at the boundary points into $\mathrm{Im}(i_\pm)\oplus  \mathrm{Im}(i_\pm)^\perp$. We call $\mathrm{Im}(i_\pm)$ the `continuing' components of the solution. One asks that there be an extension of
this decomposition to a unitary trivialization on the interior of the
intervals (in the vicinity of the $\lambda$-point) such that 
\begin{equation*}
T^1_i(s)=\begin{pmatrix}a^1_i(s)                & b^1_i( s)\\
                        c^1_i(s)&-\frac{\i \rho_i}{2(s-\lambda)}+d^1_i(s)                 \end{pmatrix},
                         \end{equation*}
where $\lambda=\lambda_-$ or $\lambda_+.$ The top left blocks are $k\times k$, the bottom right block is
$m\times m$; $a^1_i,b^1_i,c^1_i, d^1_i$ are analytic at $s=\lambda$, and $\{\rho_i\}_{i=1}^3$ form a fixed  $m$-dimensional irreducible representation  of $su(2)$. 
Furthermore, the  solutions on the two intervals match on the continuing components  by 
\begin{equation*}
a^1_i(0) = T^0_i(0).
\end{equation*}
In the same way, the connection coefficients match:
\begin{equation*}
a^1_0(0) = T^0_0(0).
\end{equation*}

\item At both boundary points, some extra data, consisting of a
unitary trivialization $v_\pm$ of the highest weight space of the irreducible representation as follows.
\end{itemize}
We normalise trivializations of $N_0, N_1$ at the boundary points: we assume that the injection of $N_0$ into $N_1$ maps the basis of $N_0$ into the   first $k$ basis vectors of $N_1$ and that under this, our bases match; the unitary complement of $N_0$ in  $N_1$  is then given by the last $m$ vectors of the basis. On this, the irreducible representation in the polar part of $d_i$, plus the trivialization of the highest weight space, means that there is a unique way to trivialize it, so that one has the standard representation of $SU(2)$.  Once this is done, one has a smaller  group of gauge transformations which acts at $s=\lambda_\pm$ by the identity on the last $m$ vectors, and by $U(k)$ on the first vectors, with the group elements matching on $N_0$ and $N_1$. This smaller gauge group is the group of gauge transformations of the bow representation.
\bigskip

\noindent {\it When m=0:}
\medskip

\begin{itemize}
\item At both boundary points $\lambda_\pm$, a unitary isomorphism
$i_\pm\colon N_0\rightarrow N_1$ identifies the two fibres. One asks that the solutions $A^0, A^1$ have respective limits at the $\lambda$-point at $s=0$, where again $s$ is a local parameter with the point $\lambda_\pm$ corresponding to
$s=0.$
\item The additional condition at both boundary points $\lambda_\pm$ is a  decomposition
\begin{align}
A^1(\zeta,\lambda_-)-A^0(\zeta,\lambda_-) &= (I_- -J_-^\dagger\zeta)(J_- +I_-^\dagger\zeta),\\
A^0(\zeta,\lambda_+)-A^1(\zeta,\lambda_+) &= (I_+ -J_+^\dagger\zeta)(J_+ +I_+^\dagger\zeta).
\end{align}
\item The connection is continuous at the boundary under the identification.
\end{itemize}
Again, the fibers of the two vector bundles  at each boundary point $\lambda_\pm$ are identified by $i_\pm$, so that the gauge transformations allowed are continuous at the boundary points.
\bigskip

\noindent {\it When $m<0$:} 
\medskip

One has the same boundary behaviour as for $m>0$, but now  at each boundary point $\lambda_\pm$ the roles of the two intervals are reversed, so that one still has finiteness from the small side, which is now that where the rank is $k+m$, and, from the large side, where the rank  is $k$,  poles with residues forming an irreducible 
representation of $su(2)$ of dimension $-m$, and continuing components of rank $k+m$ which match with the small side.
\bigskip

\noindent{\it Bifundamental conditions (associated to the edges)}
\medskip

The bifundamental data consist of complex linear maps between the fibres of $N_0$ at $-l/2$ and $l/2$:
\begin{align} B_{t,h} : &N_0|_{-l/2}\rightarrow N_0|_{l/2},\\ 
B_{h,t}:&N_0|_{l/2} \rightarrow N_0|_{-l/2}.\end{align}
The notation $t$ (for the edge's `tail') refers to the point $l/2$, and $h$ (for the edge's `head') to the point $-l/2$. With this, the matrices $A^0(\zeta,l/2), A^0(\zeta,-l/2)$ are required to satisfy:
\begin{align}
A^0(\zeta,l/2) =& (B_{t,h} +\zeta B_{h,t}^\dagger)(B_{h,t} -\zeta B_{t,h}^\dagger),\\ A^0(\zeta,-l/2)=& (B_{h,t} -\zeta B_{t,h}^\dagger)(B_{t,h} +\zeta B_{h,t}^\dagger).\end{align}

\subsubsection{Bow complexes}  \label{Bow-complexes}

There is a partial, holomorphic version of the Nahm data, which following Donaldson \cite{Donaldson:1985id},
is referred to as a Nahm complex, and in our case forms a {\em bow complex}. Essentially, one restricts to $\zeta = 0$. In our case, this consists of:
\medskip

\noindent{\it Nahm data}

 \begin{itemize}
\item A complex  bundle $N_0$ of rank $k$ over the intervals $[-l/2,  \lambda_-]\cup [\lambda_+, l/2]$, equipped over
$[-l/2,  \lambda_-)\cup (\lambda_+, l/2]$,  with a connection 
$ \frac{d}{ds} + \alpha^0(s)$, and a section $\beta^0(s)$ of $\mathrm{End}(N_0)$ which is covariant constant 
$$\frac{d\beta}{ds} + [\alpha(s),\beta(s)] = 0;$$
\item A complex bundle $N_1$ of rank $k+m$ over the interval
$(\lambda_-, \lambda_+)$, equipped with a smooth complex connection
$\alpha^{1}(s)$  and a smooth covariant constant section
$\beta^{1}(s)$ of $\mathrm{End}(N_1)$ ; \end{itemize}
\bigskip

\noindent{\it Fundamental data}
\medskip

\noindent {\it When $m>0$}:
\medskip

 \begin{itemize}
 \item At the boundary points $\lambda_\pm$, the connection $\alpha^0(s)$, and a section $\beta^0(s)$ have finite limits, and from the ``large'' side, the connection $\alpha^1(s)$, and a section $\beta^1(s)$ are analytic near the boundary points, with at most a  pole of order one at the boundary point.
 \item At the boundary points $\lambda_\pm$,   injections
$i_\pm\colon N_0\rightarrow N_1$ and   surjections
$\pi_\pm\colon N_1\rightarrow N_0$, such that
$\pi_\pm i_\pm = Id$, so that one can decompose $N_1$ as $
\mathrm{Ker}(\pi_\pm)\oplus \mathrm{Im}(i_\pm)$. One asks that there be an extension of
this decomposition to a trivialization on the interior of the
interval (in the vicinity of each $\lambda$-point) such that one can write the connection $\alpha_{1}(s)$
and the endomorphism $\beta_{1}(s)$ in block form near the boundary points as
\begin{equation*}
\alpha_{1}(s)=\begin{pmatrix}U(s)                & s ^{\frac{m-1} {2}}W(s)\\
                         s^{\frac{m-1} {2}}V(s)&T(s)                 \end{pmatrix},\quad
\beta_{1}(s)=\begin{pmatrix}P(s)                & s^{\frac{m-1} {2}}Q(s)\\
                        s^{\frac{m-1} {2}}R(s)&S(s) \end{pmatrix},
\end{equation*}
where $s$ is a local parameter with the point $\lambda_\pm$ corresponding to
$s=0$. The top left blocks are $k\times k$, the bottom right block is
$m\times m$; $U,W,V,P,Q,R$ are analytic at $s=0$, and $T,S$ are meromorphic
with simple poles at $s=0$, and residues 
\begin{gather}
T_{-1}= \mathrm{diag}\Bigl({\frac{-(m-1)}{2}},{\frac{2-(m-1)}{2}},\dots,{\frac{(m-1)}{2}}\Bigr),\\
S_{-1}=\Sh:= \begin{pmatrix}0&0&0&\dots&0&0\\ 1&0&0&\dots&0&0\\0&1&0&\dots&0&0\\
\dots&\dots&\dots&&\dots&\dots\\
0&0&0&\dots&1&0 \end{pmatrix}.
\end{gather}
Furthermore,
\begin{equation*}
U(0) = \alpha^{0}(0),\quad P(0) = \beta^{0}(0).
\end{equation*}

\item At both boundary points, some extra data, consisting of a
trivialization (choice of vectors $v_-, v_+$) of the $\frac{-(m-1)}{2}$
eigenspace of $T_{-1}$.
\end{itemize}

\bigskip

\noindent {\it When $m=0$}:
 \medskip
 
 \begin{itemize}
 \item At the boundary points $\lambda_\pm$, isomorphisms
$i_\pm\colon N_{ 0}\rightarrow N_1$, $\pi_\pm= i_\pm^{-1}$
with the gluing condition that
$\beta^{0}-\pi_\pm\beta^{1}i_\pm$ has rank one at the boundary point.

\item At the boundary points $\lambda_\pm$, extra data consisting of decompositions $ (I_-, J_-)$, $  (I_+,J_+)$ of the rank one boundary difference matrices  $\beta^{ 0}-\pi_-\beta^{1}i_-$, $\beta^{0}-\pi_+\beta^{1}i_+$ into products of a column and a row vector:
\begin{equation}
\beta^{0}-\pi_-\beta^{1}i_-= I_-\cdot J_-,\quad \beta^{0}-\pi_+\beta^{1}i_+ = I_+\cdot J_+\end{equation}
\item The connection $\alpha$ is continuous at the boundary under the identification.
\end{itemize}

\bigskip

\noindent {\it When $m<0$}:
 \medskip
 
  One has the same boundary behaviour as for $m>0$, but now with at each boundary point $\lambda_\pm$, the roles of the `small' and `large' intervals interchanged in the above above.

\bigskip

\noindent{\it Bifundamental data}
\medskip

The edge data consist of complex linear maps between the fibres of $N_0$ at $-l/2, l/2$:
\begin{align} 
B_{t,h} : N_0|_{-l/2} & \rightarrow N_0|_{l/2},\\ 
B_{h,t}: N_0|_{l/2} & \rightarrow N_0|_{-l/2}.
\end{align}
There are decompositions
\begin{equation}
\beta(-l/2) = B_{h,t}B_{t,h},\quad \beta( l/2) = B_{t,h}B_{h,t}.
\end{equation}

 All of this data is of course to be considered modulo complexified  gauge transformations; again, we can normalise the bases at the boundary points so that $N_0$ injects into $N_1$ as the first $k$ vectors, that there is a well defined complementary space, with a fixed trivialisation on it (exploiting the cyclicity of the residue of $\beta$); the gauge transformations at these boundary points act by $Gl(k,\mathbb{C})$ on the first $k$ vectors, and trivially on the others. 
 
 \subsection{The Bow Monad}
 The bow monad described below is directly related to the Up transform of \cite{Second} via the {\em bow Dirac operator}.  Each bow solution  
 $$(\nabla, T_1, T_2, T_3, B_{t,h},B_{h,t},I_-,J_-,I_+,J_+),$$ 
 gives rise to a family of Dirac-type operators
\begin{multline}
{\mathbf D}_t^\dagger=
\begin{pmatrix}
-D^\dagger & Z^\dagger\\
Z & D
\end{pmatrix}
+\delta(s-\lambda_-)\begin{pmatrix} J_{-}^\dagger \\ I_{-}\end{pmatrix}
+\delta(s-\lambda_+)\begin{pmatrix} J_{+}^\dagger \\ I_{+}\end{pmatrix}\nonumber\\
+\left(\delta(s-h)\begin{pmatrix}\bar{b}_{ht} & B_{th}^\dagger \\  -b_{th} & B_{ht} \end{pmatrix}
+\delta(s-t)\begin{pmatrix}  B_{ht}^\dagger & -\bar{b}_{th} \\ 
 -B_{th} & - b_{ht} \end{pmatrix}\right),
\end{multline}
with $Z:=T_1+\i T_2-t_1-\i t_2$  and $D=\frac{d}{ds}-\i(T_0-t_0)+T_3-t_3.$ 
This family is parameterized by a point on the Taub-NUT with  coordinates $(t_1+\i t_2=\xi\psi,t_3=(|\psi|^2-|\xi|^2)/2,b_{ht}=\xi,b_{ht}=\psi).$
We denote the pair $N_0, N_1$ of bundles with isometric injections by $N$. We also regard $t_1,t_2,$ and $t_3$ as ($s$-independent) endomorphisms of Hermitian line bundle $e$ over the bow (with a fixed trivialization), with $b_{t,h}:e_h\rightarrow e_t$ and $b_{h,t}:e_t\rightarrow e_h.$ 
This Dirac-type operator acts on the direct sum $\Gamma_{L^2}(S\otimes N\otimes e^*)\oplus W_{\lambda_-}\oplus W_{\lambda_+}\oplus N_h\otimes e_t^*\oplus N_t\otimes e_h^*$ of
\begin{itemize}
\item
 $L^2$ sections of the bundle $S\otimes N\otimes e^*$ that are continuous in the continuing components at the $\lambda$-points.  
\item
the auxiliary spaces $W_{\lambda_-}$ and $W_{\lambda_+}$ (these are present only if $m=0$). 
\end{itemize}
The main properties of the operators of this family is that ${\mathbf D}_t^\dagger {\mathbf D}_t$ is proportional to the identity in the $S$ factor and  ${\mathbf D}_t^\dagger {\mathbf D}_t$ is strictly positive (away from the codimension at least two isolated strata in the Taub-NUT space).   Thus, $\mathrm{Ker}\,  {\mathbf D}_t=0.$ The crux of the Up transform is that the instanton bundle $E\rightarrow X_0$ emerges as the index bundle  of this family, i.e. $E=\mathrm{Ker}\,{\mathbf D}_t^\dagger.$

This form of the bow Dirac operator is amenable to the Hodge decomposition $\mathbf{D}_t^\dagger=
\left(\begin{smallmatrix} 
-\delta_0^\dagger  \\  
\delta_1
\end{smallmatrix} \right),
$ with
\begin{align}
\delta_0&=
\begin{pmatrix}
D\\
-Z\\
-J\\
\begin{smallmatrix} 
-b_{ht} & -B_{ht}\\
-B_{th} & b_{th}  
\end{smallmatrix} 
\end{pmatrix}:
\Gamma_{H^1}(N\otimes e^*)\rightarrow 
\Gamma_{L^2}(N\otimes e^*)^{\times 2}\oplus W\oplus (N_h\otimes e_t^*
\oplus N_t\otimes e_h^*),
\\
\delta_1&=(Z, D)+\delta(s-\lambda) I+[\delta(s-h)(-b_{th},B_{ht})-\delta(s-t)(B_{th},b_{ht})]:\nonumber\\
&\Gamma_{L^2}(N\otimes e^*)^{\times 2}\oplus W\oplus (N_h\otimes e_t^* \oplus N_t\otimes e_h^*)\rightarrow\Gamma_{H^{-1}}(N\otimes e^*).
\end{align}
The fact that we began with a solution of a bow representation ensures that $\mathbf{D}_t^\dagger\mathbf{D}_t$ is real (i.e. commutes with the action of quaternionic identities on the $S$ factor) and strictly positive. This, in turn, implies $\delta_1\delta_0=0$ and $\delta_0^\dagger\delta_0=\frac{1}{2}(\delta_0^\dagger\delta_0+\delta_1\delta_1^\dagger)>0$.  Thus, $\mathrm{Ker}\, \mathbf{D}_t^\dagger$ can be identified with the middle cohomology of the Dolbeault complex
\begin{align}\label{FirstBowMonad}
0\rightarrow A\xrightarrow{\delta_0}B\xrightarrow{\delta_1}C\rightarrow 0.
\end{align}
Moreover, since $\delta_0^\dagger\delta_0=\delta_1\delta_1^\dagger=\frac{1}{2}(\delta_0^\dagger\delta_0+\delta_1\delta_1^\dagger)>0,$ $\mathrm{Ker} \delta_0=0=\mathrm{Cok} \delta_1,$ so this complex is exact in its first and last terms. This is the infinite-dimensional monad construction of the bundle $E:=\mathrm{Ker} \mathbf{D}_t^\dagger\rightarrow\mathrm{TN}_k$.

\subsubsection{Twistorial Bow Monad}
The above discussion relates the Dolbeault complex to the Dirac operator of the Up transform.  It is, however, confined to a particular choice $\zeta=0$ of a complex structure on the Taub-NUT space.  In order to make our discussion twistorial, we have to extend it to the full twistor sphere.  To achieve this consider the Hodge  decomposition
\begin{align}
\left(\begin{array}{cc} -1 & -\bar{\zeta}\\ -\zeta & 1 \end{array}\right){\mathbf D}^\dagger_t=\begin{pmatrix} -\delta_0^\dagger \\ \delta_1\end{pmatrix},
\end{align}
now, with $\delta_0$ and $\delta_1$ depend on $\zeta.$
We consider the resulting $\zeta$-dependent Dolbeault complex:
\begin{equation}\label{Eq:Complex}
{\cal C}_\zeta: 0\rightarrow {\cal A}^0\xrightarrow{\delta_0}{\cal A}^1\xrightarrow{\delta_1}{\cal A}^2\rightarrow 0,
\end{equation}
with 
${\cal A}^0=\Gamma(S\otimes N\otimes e^*), {\cal A}^1=\Gamma(S\otimes N\otimes e^*)\oplus (N_t\otimes e_L^*)\oplus (N_h\otimes e_R^*)\oplus W,$ and ${\cal A}^2=\Gamma'(S\otimes N\otimes e^*).$ The latter space consists of distributions of the form $f(s)+\sum_{\lambda\in\Lambda_0}\delta(s-\lambda) a_\lambda.$

Now, the differentials $\delta_0$ and $\delta_1$ are first order in $\zeta,$ while the spaces ${\cal A}^0, {\cal A}^1,$ and ${\cal A}^2$ are (at this stage) still $\zeta$-independent.

Our remaining goal is to find a finite-dimensional versions of the above monads \eqref{FirstBowMonad} and \eqref{Eq:Complex} and compare them with those directly associated with the instanton.
 
\section{Holomorphic Geometry of Calorons}
 
 Our complex space $X$ is a blow-up of $\bP^1\times\bP^1$. The space $\bP^1\times\bP^1$ in turn, is a compactification of $\bR^3\times S^1$, and instantons on this space, with boundary conditions similar to those for the Taub-NUT, called calorons, are closely related to instantons on the Taub-NUT. In both cases, we have, asymptotically at least, circle bundles, though with one being a Hopf bundle and the other a trivial bundle.  We will see that on the holomorphic level, our instantons on the Taub-NUT are in some sense and in a first approximation obtained by  identifying two calorons over an open set.  
 
 Likewise, from the Nahm's equations point of view, the solutions associated to the Taub-NUT are very similar to the caloron solutions; the main distinction being that the caloron solutions do not have the bifundamental data, but are simply solutions on the circle.
 
 As a warm-up (and because we will be using the geometry of the caloron bundles later on) we discuss these  solutions to the ASD equations  on the direct  product $\bR^3\times S^1$ with flat metric $\ell d\vec{t}^2+d\theta^2/\ell$ (instead of a Taub-NUT). This will allow us to recapitulate some relevant material, in particular from  \cite{Charbonneau:2006gu, Charbonneau:2007zd}.  It will give us some insight into a relatively simpler case, allowing a certain simplification of the more roundabout approach of \cite{Charbonneau:2006gu}, where the problem is studied in detail.  After this warmup, we will return to the Taub-NUT case in the next section, highlighting the differences, and in particular, for the bow side of the picture, incorporating the bifundamental data.
 
 \subsection{The bundle on twistor space}
 
As for the Taub-NUT, in the present case of the caloron, we  just discuss the $SU(2)$ case.  For calorons, we have a twistor space $\bT$ that is a $\bC^*$ bundle over the total space $\bO(2)$ of the line bundle $\pO(2)$ over $\bP^1$. Concretely, if $\zeta$ is the natural holomorphic parameter for the projective line, we can cover $\bO(2)$ by two open sets $U_0= \{\zeta\neq\infty\}$ and $U_1=  \{\zeta\neq 0\}$; one has then on $U_0$ coordinates $(\eta,\zeta)$ and on $U_1$ coordinates $(\eta',\zeta')$ related by 
$$( \eta',\zeta') = (-\eta/\zeta^2, 1/\zeta)$$ 
on the overlap.  If $\ell$ is a positive real constant (with $\sqrt{\ell}$  reciprocal of the radius of the circle), the bundle $\pi: \bT\rightarrow \bO(2)$  is the complement of the zero section of the line bundle $L^{\ell}, $ with exponential transition function $\exp(-\ell\eta/\zeta)$, so that one has coordinate patches $V_0= \{\zeta\neq\infty\}$ and   $V_1=  \{\zeta\neq 0\}$ on $\bT$, each isometric to $\mathbb{C}^*\times\mathbb{C}\times\mathbb{C}$, with  coordinates $(\xi, \eta,\zeta)$ and  $(\xi',\eta',\zeta')$ related by
$$(\xi', \eta',\zeta') = ( \exp(-\ell \eta/\zeta) \xi, -\eta/\zeta^2, 1/\zeta).$$ 

The  complex structures on $\bR^3\times S^1$ are parametrized by $\zeta\in \bP^1$. 
Now consider an $SU(2)$ instanton bundle on $\bR^3\times S^1$, equipped with a chosen framing at infinity in, say, the positive $t^3$-direction in $\bR^3\times S^1$, with instanton charge $k$ and monopole charge $m$. The instanton, since it has anti-self-dual curvature, gives an integrable holomorphic structure for the fibre over each $\zeta\in \bP^1$, and globally, a holomorphic bundle $\pE$ on the twistor space; this bundle is equipped with a real involution lifting a natural real involution on $\bT$.  Following, e.g., Biquard \cite{Biquard91,Biquard97}, and as we shall see for the Taub-NUT, the boundary conditions allow an extension of the holomorphic bundle $\pE$ to a  (fibrewise over $\bO(2)$, and so partial) compactification  $\overline{\bT} = \bP(L^{\ell}\oplus \pO)\rightarrow \bO(2)$ given by adding  two natural divisors $\Gamma_0=\{\xi=0\}$, $\Gamma_\infty=\{\xi=\infty\}$, as in \cite{Charbonneau:2006gu}.  As we shall see below, the bundle $\pE$  over the compactification then has  a flag of subbundles $0=\pE^0_0\subset \pE^0_1\subset \pE^0_2 =\pE$ over  $\Gamma_0$ and $0=\pE_\infty^0\subset \pE_\infty^1\subset \pE_\infty^2 =\pE$ over  $\Gamma_\infty$; here the index $i$ in $\pE^{\cdot}_i$ denotes the rank of $\pE^{\cdot}_i$. This compactification follows, in essence, from the work of Biquard \cite{Biquard91}. Define sheaves of meromorphic sections 
\begin{equation} 
\pE_{p,q}^{s,t}=\left\{\sigma\Big|
\begin{array}{c}\xi^{ p}\sigma\ {\rm finite\ at}\ \Gamma_0\ {\rm with\ values\ in}\ \pE^0_q,\\ \xi^{-s} \sigma\ {\rm finite\ at}\ \Gamma_\infty\ {\rm with\ values\ in}\ \pE_\infty^t
\end{array}
\right\}.
\end{equation}

We then look at  the bundle $\pF$ on $\bO(2)$ obtained as the direct image of the bundle $\pE$,  projecting from the $\bC^*$ bundle $\bT$ over $\bO(2)$ as opposed to the $\bP^1$-bundle; $\pF$ is of infinite rank. One   can use the
flags $\pE^0_q $ along $\Gamma_0$, $\pE_\infty^n$ along $\Gamma_\infty$
to define for $p\in \bZ$ and $q= 0,1$ subbundles $\pF^0_{p,q},
\pF_\infty^{m,n}$ of $\pF$ as
\begin{align*}
\pF^0_{ p,q} &=\{\sigma\in \pF\mid \xi^{p}\sigma\text{ finite at }{C}_0
                         \text{ with value in } \pE^0_q  \}, \\
\pF_\infty^{s,t} &=\{\sigma\in \pF\mid \xi^{-s}\sigma \text{ finite at } {C}_\infty
                           \text{ with value in } \pE_\infty^t \}.
\end{align*}
We now have infinite flags
\begin{equation}\label{infiniteflags}\begin{gathered}
\cdots\subset \pF^0_{-1,0}\subset \pF^0_{-1,1} \subset \pF^0_{0,0} \subset \pF^0_{0,1} \subset \pF^0_{1,0} \subset \pF^0_{1,1} \subset\cdots\phantom{-.}\\
\cdots\supset \pF_\infty^{2,0}\supset \pF_\infty^{1,1} \supset \pF_\infty^{1,0} \supset \pF_\infty^{0,1} \supset \pF_\infty^{0,0} \supset \pF_\infty^{-1,1} \supset\cdots.
\end{gathered}\end{equation}

We summarize some results from  \cite{Charbonneau:2006gu}: The direct images $R^1\pi_*(\pE_{p,q}^{s,t})$ can be computed as the quotients $\pF/(\pF^0_{p,q}+\pF_\infty^{s,t})$. The direct images  
$$R^1\pi_*(\pE_{p,0}^{-p+1,0})=\pF/(\pF^0_{p,0}+\pF_\infty^{-p+1,0}), \ R^1\pi_*(\pE_{p,1}^{-p,1}) = \pF/(\pF^0_{p,1}+\pF_\infty^{-p,1})$$ are supported respectively over two {\it spectral curves}  $S_0$, $S_1$ in $\bO(2)$. When $\pE$ is twisted by $L^s$, for $s$ in an interval, these sheaves are the sources of the flows for Nahm's equations, by the well known correspondence of solutions to Lax pair type equations with flows of line bundles on a curve \cite{Nahm:1982jt, Hitchin:1983ay, Griffiths}; these flows are the Nahm transform of the caloron. Generically, one has a partition of the intersection $S_0\cap S_1$ of the two curves into two divisors $S_{10}  $ and $S_{01}$, and an identification of our quotients:
\begin{align*}
{\scriptstyle \pF}/{\scriptstyle(\pF^0_{p,0}+\pF_\infty^{-(p-1),0})} &=
 L^{p\ell+\lambda_-}(2k+m)[-S_{10}]|_{S_0} =
L^{(p-1)\ell+\lambda_+}(2k+m)[-S_{01}]|_{S_0},\\
{\scriptstyle \pF}/{\scriptstyle(\pF^0_{p,1}+\pF_\infty^{-p,1})} &=
L^{p\ell+\lambda_+}(2k+m)[-S_{01}]|_{S_1} =
L^{p\ell+\lambda_-}(2k+m)[-S_{10}]|_{S_1},\\
\end{align*}
The quotients  fit into a description of $\pF$ by the exact sequence
\begin{equation}\label{description-of-F}
 \begin{matrix}
0&\rightarrow&\pF&\rightarrow&
\begin{matrix}\vdots\\ \pF/(\pF^0_{p-1,1}+\pF_\infty^{-p+1,0}) \\ \oplus\\
\pF/(\pF^0_{p,0}+\pF_\infty^{-p,1}) \\ \oplus\\ \pF/(\pF^0_{p,1}+\pF_\infty^{-p,0}) \\ \vdots
\end{matrix}
&\rightarrow &
\begin{matrix}
\vdots\\ \oplus\\ \pF/(\pF^0_{p,0}+ \pF_\infty^{-(p-1),0})\\
\oplus\\ \pF/(\pF^0_{p,1}+\pF_\infty^{-p,1})\\ \oplus\\ \vdots
\end{matrix}
&\rightarrow &0.
\end{matrix}
\end{equation}
Generically, these become 
\begin{equation*}\label{sequence}
0\rightarrow
\pF\rightarrow\begin{matrix}
\vdots\\ L^{(p-1)\ell+\lambda_+}(2k+m)\otimes{\cal I}_{S_{01}}\\ \oplus\\
L^{p\ell+\lambda_-}(2k+m)\otimes{\cal I}_{S_{10}}\\ \oplus\\
L^{p\ell+\lambda_+}(2k+m)\otimes{\cal I}_{S_{01}}\\ \vdots\end{matrix}\nonumber
\rightarrow
\begin{matrix}
\vdots\\ L^{(p-1)\ell+\lambda_+}(2k+m)[-S_{01}]|_{S_0}\\
=L^{p\ell+\lambda_-}(2k+m)[-S_{10}]|_{S_0}\\ \oplus\\
L^{p\ell+\lambda_+}(2k+m)[-S_{01}]|_{S_1}\\ = L^{p\ell+\lambda_-}(2k+m)[-S_{10}]|_{S_1}\\
\vdots\end{matrix}
\rightarrow0.
\end{equation*}
Here $ \pI_{S_{0,1}},  \pI_{S_{1,0}}$ are the ideal sheaves of $S_{0,1}, S_{1,0}$. 

There is a natural shift operator $\Xi$ on this sequence, given by multiplication by the coordinate $\xi$ (more properly by the tautological section $\hat \xi$ of $L^{\ell}$); it acts on $\pF$ as an automorphism, and acts on the quotients by moving them two steps down, changing $p-1$ to $p$, and inducing isomorphisms, since  on the compactification   $\hat \xi$ 
has a zero over the divisor $\Gamma_0$, and a pole over the divisor $\Gamma_\infty$. 

These bundles will correspond to solutions of Nahm's equations on a circle; in this picture,  a shift in $p$ corresponds to a flow around the circle for Nahm's equations.  Equivalently,  the Nahm flow around the full circle shifts the relevant line bundles on the spectral curve by $L^{\ell}$; that this closes into a flow on the circle requires a multiplication by the tautological section $\hat \xi$ of $L^{-\ell}$.
  
We note that we can rebuild $\pE$ from $\pF$ and the shift operator   $\Xi$; along the surfaces $\xi = c,\ c\neq 0,\infty$, for example, one has 
$$\pE|_{\xi= c} = \pF/ (\Xi-c\bI)\pF.$$

\subsection{Restricting to a fibre $\zeta = 0$ of the twistor space: from bundles to monads, $m>0$ case}

\subsubsection{A chain of equivalent objects}

Let us now restrict to the surface $\zeta = 0$ in twistor space. The general Hitchin-Kobayashi, or Narasimhan-Seshadri, correspondence should tell us that the moduli of solutions on the full twistor space will correspond to moduli of holomorphic objects on this fibre. We had a shift operator $\Xi$, acting on $\pF$ by automorphisms. There is also a `half shift', moving the sequence down by one step, which corresponds to a Hecke transform of the bundle $\pE$ both at $\xi = 0$ and $\xi=\infty$. It interchanges the magnetic charge by $m\mapsto -m$, and so we need only consider $m\geq 0$. We consider first  the case $m>0$, as the data for $m>0$ and $m=0$  are somewhat different. 
 Our purpose in this section is to exhibit a chain of equivalences:

\begin{theorem} \cite{Charbonneau:2006gu,Charbonneau:2007zd,Nye-Singer} One has sets of equivalent data:
\begin{enumerate}
\item Framed holomorphic bundles $E$ on $\bP^1\times \bP^1$, as above;
\item Sheaves $P^{i,j}_{k,l}, Q^{i,j}_{k,l}$ on $\bP^1$;
\item  A 7-tuple of matrices $A, B, C, D_2, A',B',C'$ (modulo the action of $Gl(k,\bC)$), satisfying algebraic relations \eqref{monad-cal} and nondegeneracy conditions \eqref{gencon1},\eqref{gencon2},\eqref{gencon3}, and \eqref{gencon4};
\item  A monad $ V_1 {\buildrel{\alpha }\over{\rightarrow} }V_2 {\buildrel{\beta}\over{\rightarrow}}  V_3$ of standard vector bundles on $\bP^1\times \bP^1$, whose cohomology $\ker( \beta)/{\rm {Im}}(\alpha)$ is the bundle $E.$
\end{enumerate}

\end{theorem}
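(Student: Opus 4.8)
The plan is to establish the four descriptions as equivalent by going once around the cycle $1\to 2\to 3\to 4\to 1$, constructing at each arrow an explicit and invertible correspondence of moduli, so that the composite closes up to the identity. The objects of the middle two descriptions have already been prepared above: the infinite-rank direct image $\pF$ with its infinite flags $\pF^0_{p,q},\pF_\infty^{s,t}$ of \eqref{infiniteflags}, the spectral curves $S_0,S_1\subset\bO(2)$, the shift operator $\Xi$, and the quotient sheaves appearing in \eqref{description-of-F}. The content of the theorem is that these three avatars carry exactly the same information as the framed bundle $E$.

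First I would treat $1\to 2$. Starting from a framed holomorphic $E$ on the fibre $\zeta=0$, spread it out over the twistor space and form $\pF$ together with the two infinite flags; the boundary flags $\pE^0_q$ along $\Gamma_0$ and $\pE^t_\infty$ along $\Gamma_\infty$ supplied by the boundary conditions are precisely what is needed to define $\pF^0_{p,q}$ and $\pF_\infty^{s,t}$. Restricting $S_0,S_1$ to the fibre over $\zeta=0$ turns them into finite divisors on the $\eta$-line, so the quotients $\pF/(\pF^0_{p,q}+\pF_\infty^{s,t})$ become torsion sheaves on $\bP^1$; these are the $P^{i,j}_{k,l},Q^{i,j}_{k,l}$, indexed by the flag positions $(p,q,s,t)$. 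Conversely the sheaves, glued along \eqref{description-of-F}, rebuild $\pF$, and then $E$ is recovered fibrewise from the reconstruction $\pE|_{\xi=c}=\pF/(\Xi-c\bI)\pF$.

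Next, $2\to 3$ is the linearization. Each spectral sheaf has finite-dimensional cohomology, and the operators of multiplication by the fibre coordinate $\eta$, the shift $\Xi$ (multiplication by the tautological section $\hat\xi$ of $L^{-\ell}$), together with the boundary gluing maps, act on these cohomology groups as finite matrices. Reading these off in a fixed basis, well defined up to a change of basis in $\GL(k,\bC)$, produces the $7$-tuple $A,B,C,D_2,A',B',C'$. The commutation of $\eta$ with $\Xi$ and the prescribed factorizations of the boundary-difference matrices become the algebraic relations, while the requirements that the sheaves have the correct length and support away from the diagonal degeneracies of $S_0\cap S_1$ become the nondegeneracy conditions; inverting, the matrices present the sheaves as cokernels of explicit matrix pencils in $\eta$. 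For $3\to 4\to 1$ I would assemble from the matrices a Beilinson-type monad $V_1\to V_2\to V_3$ on $\bP^1\times\bP^1$ with coordinates $(\eta,\xi)$, the terms being sums of standard line bundles $\pO(a,b)$ with multiplicities dictated by $k$ and $m$ and the maps $\alpha,\beta$ affine-linear in $(\eta,\xi)$ with the matrices as coefficients; the relations give $\beta\circ\alpha=0$, the nondegeneracy conditions give fibrewise injectivity of $\alpha$ and surjectivity of $\beta$, hence local freeness of $\ker(\beta)/\mathrm{Im}(\alpha)$, and a Chern-class, generic-line, and boundary-flag computation identifies this cohomology with the framed $E$.

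The hard part is the linearization step and, above all, the closing of the cycle as an equivalence rather than a one-directional construction: one must show that the nondegeneracy conditions on the matrices are \emph{exactly} equivalent to local freeness of the monad cohomology and to the prescribed framing and flag data, so that the four constructions are mutually inverse on moduli. Tracking the framing and the half-shift Hecke transform (the involution $m\mapsto -m$ used to reduce to $m\ge 0$) coherently through all four descriptions, and checking that the supports of $P^{i,j}_{k,l},Q^{i,j}_{k,l}$ match the jumping lines with the correct multiplicities, is where the bookkeeping is most delicate; it is the transversality/genericity hypotheses on $S_0\cap S_1$ that make the partition into $S_{10},S_{01}$, and hence the bijection, clean.
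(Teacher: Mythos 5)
Your overall architecture is exactly the paper's: the chain $1\to2\to3\to4\to1$, with the sheaves $P,Q$ arising as quotients $F/(F^0_{p,q}+F_\infty^{s,t})$ of the infinite-rank direct image $F$ with its infinite flags; the matrices arising by resolving the $P,Q$ via the pushed-down resolution \eqref{resolution-diagonal} of the diagonal, so that each sheaf is the cokernel of an explicit pencil (your ``matrix pencils'', the paper's $Z_0=\eta-B$ etc.\ in \eqref{matrices-caloron}); the relations \eqref{monad-cal} coming from commutativity of the resolution diagram \eqref{resolution-PQ} and the nondegeneracy conditions \eqref{gencon1}--\eqref{gencon4} from irreducibility/local freeness and surjectivity on sections; and the monad \eqref{agmonad} assembled from those resolutions and row/column-reduced to the standard monad for bundles trivial on the lines at infinity. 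The loop is closed, as you propose, by the reconstruction $E|_{\xi=\xi'}=F/(\Xi-\xi'\bI)F$ and the folded sequence \eqref{caloron-sequence-projected}.

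The one step that would fail as literally written is the start of $1\to2$: you propose to ``spread out'' the framed holomorphic bundle $E$ on the fibre $\zeta=0$ over the whole twistor space and then restrict the spectral curves $S_0,S_1$ back to the fibre. Item 1 of the theorem is an arbitrary framed holomorphic bundle on $\bP^1\times\bP^1$ satisfying the stated flag and triviality conditions; such a bundle has no a priori holomorphic extension to the twistor space $\bT$, and producing one (for bundles coming from instantons) is exactly the hard analytic direction of the Kobayashi--Hitchin/twistor correspondence, which cannot be invoked inside a proof of a purely holomorphic equivalence. The detour is also unnecessary: the direct image $F$, its flags $F^0_{p,q}$, $F_\infty^{s,t}$ (cut out by the sub-bundles $E^0_1$ along $C_0$ and $E^1_\infty$ along $C_\infty$ together with orders of vanishing in $\xi$), the torsion quotients $Q$ supported on the jumping lines of the ruling, and the shift $\Xi$ (multiplication by $\xi$) are all defined fibrewise on $\bP^1\times\bP^1$, with no reference to $S_0$, $S_1$, or $\bT$ --- this is how the paper, following Charbonneau--Hurtubise, proceeds. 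With that substitution your argument coincides with the paper's proof.
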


The precise definitions of the items on this list are given below.

\subsubsection{Holomorphic Data I: Bundles $E$ on $\bP^1\times \bP^1$}

We begin with the first item on the list, the bundle corresponding to a caloron; this is a holomorphic bundle over $\bC\times \bC^*$.
The restriction on the twistor space to $\zeta = 0$  corresponds to fixing a complex structure $\bR^3\times S^1 = \bC\times \bC^*\ni(\eta=t_1+i t_2,\xi=\exp(t_3+i\theta))$. As we have seen, the twistor space has a partial compactification to a $\bP^1$-bundle over $\bO(2)$, giving on  $\zeta = 0$, a product $\bP^1\times \bC$; the limits $\xi = 0, \xi =\infty $ in the $\bP^1$ correspond, respectively, to the limits $t_3\rightarrow -\infty, +\infty$ in $\bR^3$.  One is compactifying a cylinder by adding two points;  in the neighbourhood of one of these points, say as $t_3\rightarrow -\infty,$ one again copies the approach of e.g., Biquard \cite{Biquard91}, finding solutions to the Cauchy-Riemann equations which are asymptotic to a constant at $t_3= -\infty$, i.e. at $\xi=0$. This extends $E$ to a bundle at the punctures.  The asymptotics of the instanton tell us in addition that there is a sub line bundle $E^0_1$ along the added divisor $C_0$ corresponding to  the negative eigenbundle of the asymptotic connection component matrix  
$A_\theta$. In the same way, at the other end of the cylinders, one extends along the divisor $C_\infty$, obtaining a bundle with a subline bundle $E_\infty^1$ corresponding to the positive eigenvalue of the component $A_\theta.$

Our bundles also came with an asymptotic framing at $t_3\rightarrow \infty$, giving a   trivialization of the bundle $E$ along $C_\infty$ (the divisor cut out on $\bP^1\times \bP^1$ by $\xi = \infty$). This is compatible with the subbundle, so one can suppose that the subbundle $E_\infty^1$  corresponds to the first vector of the framing. 

Following \cite{Charbonneau:2006gu}, we compactify further to $\bP^1\times\bP^1$ by going to $\eta = t_1+ it_2 = \infty$. This is done in a way which respects the framing along $C_\infty$, extending the trivialization to $C_\infty\cup  \{\eta = \infty\}$. The flag along $C_\infty$ extends, in such a way that the degree of $E_\infty^1$ is zero; on the other hand $E^0_1$ has degree $-m$.  This is where an asymmetry between the divisors $C_0, C_\infty$ is introduced. Let $D , F$ denote the divisors $\eta= 0$, $\eta = \infty$.

In terms of our spaces (and their coordinates) 
\begin{equation}
\begin{matrix} \bP^1\times \bP^1; \  (\eta, \xi)& \hookleftarrow &\overline{\bT}|_{\zeta = 0}; \ (\eta,\xi)&\hookrightarrow& \overline{\bT}; \ (\eta,\xi, \zeta)\\
\downarrow\pi&&\downarrow\pi&&\downarrow\pi\\
\bP^1; \ (\eta)&\hookleftarrow &\bC;\ (\eta)&\hookrightarrow &\bO(2); \ (\eta,  \zeta).\end{matrix}\end{equation}

 We have, on the twistor side, our first set of holomorphic data \cite{Charbonneau:2006gu}. 
This consists of:

\begin{itemize} \item A holomorphic bundle $E$ on $\bP^1\times \bP^1$, with $c_1(E) = 0, c_2(E) = k$; 
\item  Sub-bundles $E^0_1 = \pO(-m) \rightarrow E$ along $C_0$, and $E_\infty^1= \pO  \rightarrow E$ along $C_\infty$.
\item A trivialisation of $E$ along $C_\infty\cup  F$, such that along $C_\infty$, the subbundle $E_\infty^1$ is the span of the first subspace of the trivialisation, and at $C_0\cap  F$, the   subbundle $E^0_1$ is the second vector of the trivialization.
\end{itemize}

\subsubsection{Holomorphic Data II: Sheaves   on $\bP^1$}

For the second item, let us now look at the restriction of the infinite flags, and their quotients. Set 
\begin{align}  P_{p,0}^{-p,1}&= R^1 \pi_*(E_{p,0}^{-p,1}) = F/(F^0_{p,0}+F_\infty^{-p,1}),\\
 P_{p,1}^{-p,0}& = R^1 \pi_*(E_{p,1}^{-p,0})= F/(F^0_{p,1}+F_\infty^{-p,0}), \nonumber\\
Q_{p,0}^{-p+1,0}  &= R^1 \pi_*(E_{p,0}^{-p+1,0})= F/(F^0_{p,0}+F_\infty^{-p+1, 0}), \nonumber\\
 Q_{p,1}^{-p ,1}&= R^1 \pi_*(E_{p,1}^{-p ,1})=  F/(F^0_{p,1}+F_\infty^{-p,1}).\nonumber\end{align}
 
 The diagram (\ref{description-of-F}) above restricts over $\bP^1$ to:

\begin{equation}\label{sheaf-sequences} 
\begin{matrix}
0&\rightarrow& \pO &\rightarrow& P_{p,0}^{-p,1}& \rightarrow& Q_{p,0}^{-p+1,0}&\rightarrow&0,\cr
0&\rightarrow& \pO(-m)&\rightarrow& P_{p,0}^{-p,1}& \rightarrow& Q_{p,1}^{-p ,1}&\rightarrow&0,  \cr
0&\rightarrow& \pO &\rightarrow& P_{p,1}^{-p,0}&  \rightarrow& Q_{p,1}^{-p ,1}&\rightarrow&0, \cr
0&\rightarrow& \pO(m) &\rightarrow& P_{p,1}^{-p,0}&  \rightarrow& Q_{p+1,0}^{-p,0}&\rightarrow&0.  \cr
\end{matrix}
\end{equation}

The $Q$s are torsion sheaves, supported away from $\eta=\infty$; the sheaves $P$ are then of rank one, though they may have torsion at the support of the $Q$s. Note, that the shift homomorphism $\Xi$ maps   $Q_{p+1,i}^{-p-1 ,i}$ to $Q_{p,i}^{-p ,i}$  isomorphically; likewise, it 
maps   $P_{p+1,i}^{-p-1,j}$ to  $P_{p,i}^{-p,j}$ isomorphically. In addition, since $E$ itself is locally free, there is a property of {\it irreducibility} of the sheaves in (\ref{sheaf-sequences}): 
\bigskip

\noindent{\bf Irreducibility Condition}\label{IrredCond} (\cite[page following lemma 9]{Charbonneau:2006gu}) 

1. {\it There are no skyscraper subsheaves $\bC_x$ of the $P $, $Q$ mapped to themselves by the maps above, and}

2. {\it there are no subsheaves of $P$, $Q$ mapped to themselves by the maps above, with common   skyscraper quotients $\bC_x$.}

\medskip

In short, the diagram does not have a `triangular structure', with either subobjects or quotient objects that  are resolution diagrams of torsion sheaves. The reason is that the existence of these would yield    sheaves $E$ which are not locally free, but are torsion free; the triangular structure arises from the sequence $E\rightarrow E^{**}\rightarrow E^{**}/E.$

Summarizing  from \cite{Charbonneau:2006gu}, one thus has our second set of holomorphic data:
\begin{itemize} \item Sheaves $P_{p,0}^{-p,1},P_{p,1}^{-p,0} ,Q_{p,0}^{-p+1,0}, Q_{p,1}^{-p,1}$ on $\bP^1$, fitting into sequences (\ref{sheaf-sequences}), with   $Q_{p,0}^{-p+1,0}$, $Q_{p,1}^{-p ,1}$ torsion, of length $k, k+m$, respectively, and supported away from infinity. The  $P,Q$ satisfy appropriate irreducibility conditions, given above.
\item A shift isomorphism $\Xi$ inducing isomorphisms between the  $P_{p+1,0}^{-p-1,1},P_{p+1,1}^{-p-1,0}$, $Q_{p+1,0}^{-p ,0}, Q_{p+1,1}^{-p-1,1}$ and $P_{p,0}^{-p,1}, P_{p,1}^{-p,0},$ $Q_{p,0}^{-p+1,0},$ $Q_{p,1}^{-p,1}$, commuting with the natural maps.
\item A trivialization of $P_{p,0}^{-p,1}$ and of $P_{p,1}^{-p,0}$ along $\eta = \infty$.
\item A genericity condition: the maps
\begin{equation}
 P^{0,0}_{0,1}\oplus P^{0,1}_{0,0} \xrightarrow{\begin{pmatrix}  r_{+,0}& -\xi'\Xi^{-1}r_{-,0}\\r_{+,1}&-r_{-,1}\end{pmatrix}} Q_{1,0}^{0,0}\oplus Q_{0,1}^{0,1}
 \end{equation} 
 are surjective for all $\xi'$.
\end{itemize}

To see how to get the equivalence between I and II, one has a sequence 
$$ 0\rightarrow F\rightarrow \mathop{\oplus}_{p\in \bZ} P_{p,0}^{-p,1}\oplus P_{p,1}^{-p,0}\rightarrow  \mathop{\oplus}_{p\in \bZ} Q_{p,0}^{-p+1,0}\oplus Q_{p,1}^{-p ,1}\rightarrow 0$$ defining $F$; sections of $F$ are then sequences 
$$ (..., s_{p-1,0},s_{p-1,1},s_{p,0},s_{p ,1},...)\label {sequence-p}$$
of the sum of the $P_{p,0}^{-p,1}\oplus P_{p,1}^{-p,0}$ which match when one maps them to the sum of the $ Q_{p,0}^{-p+1,0}\oplus Q_{p,1}^{-p ,1}$ under (\ref{sheaf-sequences}). The subspaces $F_\infty^{p,j}$ are then obtained as terminating sequences (i.e., zero after a certain point as one increases $p$); the subspaces $F^0_{p,j}$ are then obtained as initiating sequences (i.e., zero after a certain point as one decreases $p$).

One can  obtain 
$E$ along the line $\xi= \xi', \xi'\neq 0,\infty$ as 
$$E|_{\xi= \xi'} = F/ (\Xi-\xi'\bI)F.$$
Along $\xi=\infty$, $E$ is the quotient $F_\infty^{1,0}/F_\infty^{0,0}$, with subline bundle $F_\infty^{0,1}/F_\infty^{0,0}$; along $\xi=0$, $E$ is the quotient $F^0_{1,0}/F^0_{0,0}$, with subline bundle $F^0_{0,1}/F^0_{0,0}$. Over $\bP^1\times \bP^1$, one can obtain $E|_{\xi=\xi'}$ from 
\begin{equation}\label{caloron-sequence}
0\rightarrow  E^{0,0}_{0,1}\oplus E^{0,1}_{0,0} 
\xrightarrow{\begin{pmatrix}    i_{+,0}& -\xi'\xi^{-1} i_{-,0}\\i_{+,1}&-i_{-,1}\end{pmatrix}} E_{1,0}^{0,0}\oplus E_{0,1}^{0,1}\rightarrow E|_{\xi= \xi'}\rightarrow 0,
\end{equation} where the $i_{\pm,0}$ and $i_{\pm,1}$ are the natural inclusions. Taking direct images,  the sequence for $F$ `folds up' into  :
\begin{equation}\label{caloron-sequence-projected}
0\rightarrow  E|_{\xi= \xi'}\rightarrow P^{0,0}_{0,1}\oplus P^{0,1}_{0,0} \xrightarrow{\begin{pmatrix}  r_{+,0}& -\xi'\Xi^{-1}r_{-,0}\\r_{+,1}&-r_{-,1}\end{pmatrix}} Q_{1,0}^{0,0}\oplus Q_{0,1}^{0,1}.
\end{equation} 
In particular, this diagram gives us the genericity property.

Globally, all these fit together as follows: one has a variety $V$ defined as 
$$V = \{ (\eta, \xi,\xi')\in \bP^1\times \bP^1 \times \bP^1|\xi=\xi'\},$$
denoting by $\widetilde E,\ \widetilde E^{ij}_{kl}$, etc. the lifts   of $E,\ E^{ij}_{kl}$, etc. to $ \bP^1\times \bP^1 \times \bP^1$ via the projection onto the first two factors, we have
\begin{equation}\label{caloron-sequence-2}
0\rightarrow  \widetilde E^{0,0}_{0,1}\oplus \widetilde E^{0,1}_{0,0} \xrightarrow{\begin{pmatrix}r_{+,0}& -\xi'\xi^{-1}r_{-,0}\\r_{+,1}&-r_{-,1}\end{pmatrix}} \widetilde E_{1,0}^{0,0}([\bP^1 \times \bP^1\times \{\infty\}])\oplus \widetilde E_{0,1}^{0,1}\rightarrow \widetilde E|_V\rightarrow 0,
\end{equation} 
and taking direct images  to $\bP^1\times \bP^1$ (the last two factors of $\bP^1 \times \bP^1\times\bP^1$), we obtain
\begin{equation}\label{caloron-sequence-projected-2}
0\rightarrow  E \rightarrow \pi^*P_{0,0}^{0,1}\oplus \pi^*P_{0,1}^{0,0} \rightarrow \pi^*Q_{1,0}^{0,0}(C_\infty)\oplus \pi^*Q_{0,1}^{0,1},
\end{equation} 
where $\pi(\eta,\xi') = \eta$. From \cite[Theorem 7]{Charbonneau:2006gu}, we have:
\begin{proposition}  Holomorphic data I and II are equivalent.\end{proposition}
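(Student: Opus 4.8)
The plan is to establish the equivalence of Holomorphic Data I and II by showing that each can be canonically reconstructed from the other, using the direct image functor $R^1\pi_*$ and its inverse (reconstruction via the shift operator $\Xi$). The forward direction (I $\Rightarrow$ II) is essentially a matter of definitions together with cohomology computations: given a bundle $E$ on $\bP^1\times\bP^1$ with the flag structure along $C_0,C_\infty$, one forms the sheaves $\pE^{s,t}_{p,q}$ of meromorphic sections with prescribed growth, takes their direct images under the $\bC^*$-bundle projection $\pi$, and verifies that the resulting sheaves $P^{i,j}_{k,l},Q^{i,j}_{k,l}$ fit into the four exact sequences \eqref{sheaf-sequences}. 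The key computation here is that $R^1\pi_*$ of a line bundle twisted by $L^{s}$ produces a torsion sheaf on $\bP^1$ precisely when the relevant cohomology jumps, giving the $Q$'s as torsion sheaves of length $k$ and $k+m$ respectively. The trivialization along $\eta=\infty$ is inherited from the framing of $E$ along $C_\infty\cup F$, and the shift $\Xi$ descends from multiplication by the tautological section $\hat\xi$. The genericity condition follows from the local freeness of $E$ via the projected sequence \eqref{caloron-sequence-projected}.

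\emph{First I would} verify the forward direction carefully, checking that the irreducibility conditions on the sheaves $P,Q$ are the exact shadow of the requirement that $E$ be locally free (as the excerpt already explains, the failure of irreducibility corresponds to the appearance of a torsion-free but non-locally-free sheaf via $E\to E^{**}\to E^{**}/E$). \emph{Then I would} construct the reverse map (II $\Rightarrow$ I): given the sheaves and the shift isomorphism $\Xi$, one reassembles the infinite-rank direct image $\pF$ as the kernel in the sequence
\begin{equation*}
0\rightarrow F\rightarrow \mathop{\oplus}_{p\in\bZ} P_{p,0}^{-p,1}\oplus P_{p,1}^{-p,0}\rightarrow \mathop{\oplus}_{p\in\bZ} Q_{p,0}^{-p+1,0}\oplus Q_{p,1}^{-p,1}\rightarrow 0,
\end{equation*}
recovers the infinite flags $\pF^0_{p,j},\pF_\infty^{s,t}$ as the initiating and terminating sequences respectively, and then reconstructs $E$ fibrewise by $E|_{\xi=\xi'}=\pF/(\Xi-\xi'\bI)\pF$ for $\xi'\neq 0,\infty$, together with $E|_{\xi=\infty}=\pF_\infty^{1,0}/\pF_\infty^{0,0}$ and $E|_{\xi=0}=\pF^0_{1,0}/\pF^0_{0,0}$ at the two special fibres. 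The globalized sequence \eqref{caloron-sequence-2} over the incidence variety $V$ packages these fibrewise descriptions into a single bundle on $\bP^1\times\bP^1$, and taking direct images recovers the flag and framing data.

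\emph{The hard part will be} verifying that the two constructions are mutually inverse and, in particular, that the reconstructed object is genuinely a holomorphic \emph{bundle} (locally free) rather than merely a torsion-free sheaf. This is exactly where the irreducibility condition does its work: one must show that the cohomology $\ker(\beta)/\mathrm{Im}(\alpha)$ of the reconstruction, computed fibrewise over each $\xi'$, has constant rank and no jumping of the local structure beyond what is permitted. The genericity condition---that the displayed map $P^{0,0}_{0,1}\oplus P^{0,1}_{0,0}\to Q_{1,0}^{0,0}\oplus Q_{0,1}^{0,1}$ be surjective for all $\xi'$---is precisely what guarantees that the quotient $E|_{\xi=\xi'}$ is a bundle (and that the defining sequence \eqref{caloron-sequence-projected} has no higher cohomology obstruction) uniformly in $\xi'$. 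I expect the main technical obstacle to be controlling behaviour at the boundary divisors $C_0$, $C_\infty$ and at $\eta=\infty$ simultaneously, ensuring the flag degrees ($0$ along $C_\infty$, $-m$ along $C_0$) and the framing compatibility are preserved in both directions.

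\emph{Since} the excerpt explicitly cites \cite[Theorem 7]{Charbonneau:2006gu}, I would organize the proof so that the bulk of the functorial bookkeeping---the exactness of the direct image sequences, the length counts of the torsion sheaves, and the identification of the reconstruction as an inverse---is quoted from that reference, with the new content here being the verification that the genericity and irreducibility conditions stated above are the correct translations of local freeness and framing in the present normalization. The proof thus reduces to checking compatibility of conventions rather than reproving the correspondence from scratch.
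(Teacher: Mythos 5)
Your proposal takes essentially the same approach as the paper: the forward direction via the direct images $R^1\pi_*$ of the sheaves $\pE^{s,t}_{p,q}$, the reverse direction by reassembling $F$ from the $P$'s and $Q$'s, recovering the flags as initiating/terminating sequences and reconstructing $E$ fibrewise as $F/(\Xi-\xi'\bI)F$ (globalized by the sequence over the incidence variety), with irreducibility tied to local freeness via $E\to E^{**}\to E^{**}/E$ and genericity read off from \eqref{caloron-sequence-projected}. The paper, like you, delegates the detailed verification to \cite[Theorem 7]{Charbonneau:2006gu}, so your plan is a faithful match with no gaps.
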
 

\subsubsection{Holomorphic Data III: Matrices, up to the action of $Gl(k)$} 

To go on to our third set of holomorphic data, we use a natural resolution of the diagonal $\Delta$ in $\bP^1\times \bP^1$:
\begin{equation} \label{resolution-diagonal}
0\rightarrow \pO(-1,-1)\rightarrow \pO\rightarrow \pO_\Delta\rightarrow 0.
\end{equation}
Lifting our sheaves $P, Q$ to the diagonal and pushing down, we have resolutions:
\begin{equation} \label{resolution-PQ}\xymatrix{ &....&...& &....&\\
\pO(-1)^{k }\ \  \ar[dr]^{X_{+,1}  } \ar[r]^{W_+}\ar[ur]^{X_{+,0} }&   \pO^{k+1}\ar[dr]^{Y_{+,1}}\ar[ur]^{Y_{+,0}}\ar[rr]&   &P_{p,0}^{-p,1}\ar[dr]^{r_{+,1}}\ar[ur] &&\\  
 & \pO(-1)^{k+m} \ar[r]^{Z_1 } &\pO^{k+m}\ar[rr]& &Q_{p,1}^{-p,1}\\
 \pO(-1)^{k+m}\ \  \ar[dr]^{X_{-,0} }\ar[r]^{W_-}\ar[ur]^{X_{-,1} }& \pO^{k+m+1} \ar[dr]^{Y_{-,0} }\ar[ur]^{Y_{-,1} }\ar[rr]&& P_{p,1}^{-p,0}\ar[ur]_{r_{-,1}}\ar[dr]^{r_{-,0}} \\ 
 & \pO(-1)^{k } \ar[r]^{Z_0} &\pO^{k}\ar[rr]& &Q_{p+1,0}^{-p ,0}\\
 \pO(-1)^{k }\ \ \ar[dr]^{X_{+,1}  } \ar[r]^{W_+}\ar[ur]^{X_{+,0} }&   \pO^{k +1}\ar[dr]^{Y_{+,1}}\ar[ur]^{Y_{+,0}}\ar[rr]&   &P_{p+1,0}^{-p-1,1}\ar[dr] \ar[ur]_{r_{+,0}}&&\ar[uuuu]_{\Xi}\\
 &....&...&  &....&
}
\end{equation}
Again, on this diagram, there is a shift isomorphism $ \Xi$, which moves the diagram two steps up. The entries of the maps $W, X,Y, Z$ are  matrices, that can be normalized (see \cite{Charbonneau:2006gu}):
\bigskip

\begin{equation} 
\begin{matrix}
X_{+,1}&= & \begin{pmatrix}A\\A'\end{pmatrix},&&Y_{+,1}&=& \begin{pmatrix}A&C_2\\A'&C'_2\end{pmatrix},\\ \\
W_+&=& \begin{pmatrix}\eta-B\\-D_2\end{pmatrix},&&Z_1 &=&\begin{pmatrix}\eta-B&C_1e_+\\ (e_-)^TB'&(\eta-\Sh)+C'_1e_+\end{pmatrix},\\ \\
X_{-,1}&=& \begin{pmatrix}1&0\\0&1\end{pmatrix},&& Y_{-,1}&=& \begin{pmatrix}1&0&-C_1\\0&1&-C'_1\end{pmatrix},\\ \\
X_{-,0}&=& \begin{pmatrix}1&0\end{pmatrix},&&Y_{-,0}&=& \begin{pmatrix}1&0&0\end{pmatrix},\\ \\
W_-&=& \begin{pmatrix}\eta-B&0\\ -(e_-)^TB'&\eta-\Sh\\0&-e_+ \end{pmatrix},&&Z_0&=&\begin{pmatrix}\eta-B\end{pmatrix},\\ \\
X_{+,0}&= &\begin{pmatrix}\xi\end{pmatrix},&&Y_{+,0}&=& \begin{pmatrix}\xi&0\end{pmatrix}. 
\end{matrix}\label{matrices-caloron}
\end{equation}

Here $A,B,C,D_2,A',B',C'$ are matrices of size $k\times k,k\times k, k\times 2,2\times k, m\times k,1\times k, m\times 2$ respectively. Subscripts denote columns or rows, where appropriate.
We let   $\Sh$ denote the downward $k \times k$ shift matrix, with ones just below the diagonal;  $e_-=(1,0,...,0)$, and  $e_+=(0,0,...,0,1)$. Setting $D_1 = e_+A'$, the commutativity of the diagram (\ref{resolution-PQ}) expresses the monad conditions for the original bundle $E$:

\begin{align}
  [A,B] + CD&=0,                     \nonumber                      \\
  \label{monad-cal}
  (e_-)^TB'A + \Sh A' - A'B -C'D&=0,   \\
   -e_+ A'+   \begin{pmatrix}1&0\end{pmatrix}   D &=0.   \nonumber                                  
  \end{align}
There are, in addition, following non-degeneracy conditions; these are the same as for the monads for $E$, $K_0$, $K_{0\infty}$,$K_\infty$ of Charbonneau-Hurtubise \cite[Theorem 5]{Charbonneau:2006gu}:
 
 \begin{align}
&\begin{pmatrix}A-\xi\\B-\eta\\D\end{pmatrix}
             \text{ injective for all }\xi,\eta\in \bC,\label{gencon1}\\
&\begin{pmatrix}\eta-B&A-\xi& C\end{pmatrix}
           \text{ surjective for all }\xi,\eta\in \bC,\label{gencon2}\\
            \nonumber\\
&\begin{pmatrix}Y_{+,1},Z_1\end{pmatrix}
             \text{ surjective for all }\eta \in \bC,\label{gencon3}\\
\nonumber\\
&\widetilde N= \begin{pmatrix}\begin{pmatrix}A\\A'\end{pmatrix}&\begin{pmatrix}C_2\\ C'_2\end{pmatrix}&M\begin{pmatrix}C_2\\ C'_2\end{pmatrix}&\cdots&
           M^{m-1}\begin{pmatrix}C_2\\ C'_2\end{pmatrix}\end{pmatrix} \text{ is an isomorphism},\label{gencon4}
\end{align}
where
\begin{equation}
M= -Z_{-,1}(0)= \begin{pmatrix} B& -C_1e_+   \\
                        (e_-)^TB' &\Sh-C_1'e_+\end{pmatrix}.
\end{equation}
The first two conditions are linked to the irreducibility of complex of $P,Q$ and so, to the eventual local freeness of the sheaf $E$. The third is linked to the surjectivity of the map $P_{p,0}^{-p,1}\rightarrow Q_{p,1}^{-p ,1}.$ (The other surjectivities are automatic). The invertibility of the final matrix $\tilde N$ is linked to the fact that the map $P_{p,0}^{-p,1}(m-1)\rightarrow Q_{p,1}^{-p ,1}$ should induce an isomorphism on sections. See \cite[Lemma 7]{Charbonneau:2006gu}.

The various normalizations involved in the process use the framing condition present in the previous sets of data, and reduce the freedom of choice to an action of $Gl(k)$. 

\begin{proposition} \cite[Theorem 5]{Charbonneau:2006gu}. Holomorphic data II and III are equivalent.\end{proposition}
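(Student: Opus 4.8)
The plan is to move between the two descriptions by resolving the sheaves of Data II on $\bP^1$ and reading the seven matrices of Data III off the entries of the resolving maps, using the resolution of the diagonal \eqref{resolution-diagonal} as the organizing tool. The basic observation is that a torsion sheaf on $\bP^1$ supported away from $\eta=\infty$ is a finite-dimensional $\bC[\eta]$-module, i.e. a vector space together with the endomorphism ``multiplication by $\eta$''; a length-$\ell$ such sheaf has a canonical two-term resolution $0\to\pO(-1)^{\ell}\xrightarrow{\eta-B}\pO^{\ell}\to Q\to0$, with $B$ the matrix of that endomorphism. Applying this to the torsion sheaves $Q_{p,1}^{-p,1}$ (length $k+m$) and $Q_{p+1,0}^{-p,0}$ (length $k$) yields the rows $Z_1,Z_0$ of \eqref{resolution-PQ}, while the rank-one sheaves $P_{p,0}^{-p,1},P_{p,1}^{-p,0}$ (rank one, possibly with embedded torsion) acquire resolutions of the shape $0\to\pO(-1)^{a}\xrightarrow{W}\pO^{a+1}\to P\to0$ producing the maps $W_\pm$.

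First I would construct the forward map (Data II $\Rightarrow$ Data III). Each of the four short exact sequences \eqref{sheaf-sequences} lifts to a morphism of two-term resolutions, and stacking these lifts together with the shift isomorphism $\Xi$ assembles the entire ladder \eqref{resolution-PQ}; the horizontal maps $X_{\pm,\cdot},Y_{\pm,\cdot}$ are exactly the chain maps realizing the inclusions $\pO,\pO(\pm m)\hookrightarrow P$ and the surjections $P\twoheadrightarrow Q$. The trivialization of $P$ along $\eta=\infty$, together with the freedom to change bases of the resolving free modules, lets me normalize the entries to the explicit shape \eqref{matrices-caloron}. In particular, the condition that the polar residue be a \emph{fixed} irreducible representation of $su(2)$ is what forces the cyclic/companion normal form governed by the shift matrix $\Sh$ and the covectors $e_\pm$. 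After this normalization the only remaining automorphisms are simultaneous conjugations by $Gl(k,\bC)$, which is precisely the gauge quotient appearing in Data III.

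Next I would verify that commutativity of \eqref{resolution-PQ} is equivalent to the algebraic relations \eqref{monad-cal}: demanding that the horizontal resolving differentials intertwine the vertical chain maps is an entry-by-entry comparison reproducing the three relations of \eqref{monad-cal}. For the converse (Data III $\Rightarrow$ Data II) I would take matrices satisfying \eqref{monad-cal}, write down the maps $W,X,Y,Z$ in the form \eqref{matrices-caloron}, and \emph{define} $P,Q$ as the cokernels; the relations \eqref{monad-cal} then force the diagram to commute and the four sequences \eqref{sheaf-sequences} to be exact, while $\Xi$ and the framing are built in. The nondegeneracy conditions translate sheaf-theoretically: \eqref{gencon1} and \eqref{gencon2} give the injectivity and surjectivity that amount to irreducibility of the complex (hence local freeness of $E$), \eqref{gencon3} is the surjectivity $P_{p,0}^{-p,1}\to Q_{p,1}^{-p,1}$, and invertibility of $\widetilde N$ in \eqref{gencon4} is the statement that $P_{p,0}^{-p,1}(m-1)\to Q_{p,1}^{-p,1}$ is an isomorphism on global sections. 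That the two constructions are mutually inverse up to $Gl(k)$ follows from uniqueness of such resolutions of a fixed sheaf up to isomorphism of the resolving complex.

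The main obstacle is the bookkeeping around the order-$m$ pole and its irreducible $su(2)$ residue. Unlike the plain Beilinson/ADHM dictionary, here one must show that the residue can be brought to the standard cyclic form encoded by $\Sh$ and $e_\pm$, that this normalization is preserved by $\Xi$ and is compatible across all four sequences of \eqref{sheaf-sequences} simultaneously, and --- most delicately --- that the condition \eqref{gencon4} is genuinely equivalent to the \emph{section-level} isomorphism $P_{p,0}^{-p,1}(m-1)\xrightarrow{\sim}Q_{p,1}^{-p,1}$ rather than to a mere sheaf surjection. This is where the powers $M,M^2,\dots,M^{m-1}$ of the companion matrix $M=-Z_{-,1}(0)$ enter, and verifying that equivalence carefully is the crux; once the normal form is in place the remaining steps are linear-algebra verifications.
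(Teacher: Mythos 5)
Your proposal follows essentially the same route as the paper (which itself only sketches the argument and delegates details to Theorem 5 of Charbonneau--Hurtubise): resolve the sheaves $P,Q$ of Data II via the resolution of the diagonal \eqref{resolution-diagonal}, normalize the resulting chain maps to the matrices \eqref{matrices-caloron} using the framing at $\eta=\infty$ and basis freedom (leaving only $Gl(k)$), read the relations \eqref{monad-cal} off commutativity of \eqref{resolution-PQ}, and translate the nondegeneracy conditions \eqref{gencon1}--\eqref{gencon4} into irreducibility/local freeness, surjectivity of $P_{p,0}^{-p,1}\to Q_{p,1}^{-p,1}$, and the section-level isomorphism for $P_{p,0}^{-p,1}(m-1)\to Q_{p,1}^{-p,1}$. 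Your identification of the crux --- that \eqref{gencon4} must be shown equivalent to an isomorphism on global sections rather than a mere sheaf surjection --- is precisely the point the paper defers to Lemma 7 of the same reference, so your plan matches the intended proof.
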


 \subsubsection{Holomorphic data IV: Monads over $\bP^1\times \bP^1$}

Of course this implies that data I and III are equivalent; one can  see this directly from how the $P$ and $Q$ resolutions   give a monad for $E$, that is a complex $A\buildrel{a}\over{\rightarrow} B \buildrel{b}\over{\rightarrow}C$ with $E$ identified as $\mathrm{Ker}(b)/\mathrm{Im}(a)$. Recall that the sections of $E$ along $\xi=c$ should be given as sections 
 $$ (..., p_{k-1,0}^{-k+1,0},p_{k-1,1}^{-k+1,0},p_{k,0}^{-k,1},p_{k ,1}^{-k,0},...)$$ 
 of $F$ that under a shift $k-1\rightarrow k$ are scaled by $c$.  This can be represented as sections  $p_{ +},p_{ -}$ of $P_+ = P_{0,0}^{0,1}, P_-= P^{0,0}_{0,1}$ lying in the kernel of the restriction map $r$ to  $Q_0= Q_{0,0}^{1,0},Q_1= Q_{0 ,1}^{0,1}$ given by:
 $$r(p_{ +},p_{ -})= ( r_{+,0}(p_{ +})-cr_{-,0}(p_{ -}),r_{+,1}(p_{ +})-r_{-,1}(p_{ -}))$$
 Varying $c$ and  replacing $c$ by $\xi$ amounts to lifting to $\bP^1\times\bP^1$.
 Let us write our resolutions of  both $P$'s and $Q$'s and the maps between them   induced by the  $r_{\pm, 0}, r_{\pm, 1}$  schematically as 
$$ \begin{matrix} \pO(-1)\otimes (U_{P_+}\oplus U_{P_-} )& \buildrel {W}\over {\longrightarrow} & \pO\otimes(V_{P_+}\oplus V_{P_-} )&\longrightarrow&P_+\oplus P_-\\
\downarrow X&&\downarrow Y &&\downarrow r\\
 \pO(-1)\otimes(U_{Q_0}\oplus U_{Q_1})& \buildrel {Z}\over {\longrightarrow} & \pO\otimes(V_{Q_0}\oplus V_{Q_1})&\longrightarrow &Q_0\oplus Q_1\end{matrix}.$$
 A section in the kernel of $r$ (that is, a section of the bundle $E$) gets represented by a section $v_P$ of $ \pO\otimes(V_{P_+}\oplus V_{P_-})$ which is mapped by $Y$ not necessarily to zero, but to an element $Z(u_Q)$ in the image of $Z$; i.e. $Y(v_P)-Z(u_Q) = 0$. These must then be considered modulo trivial $(v_P, u_Q)$, which are of the form
 $(W(u_P), X(u_P))$. In short, and more properly putting in the twists of equation (\ref{caloron-sequence-2}), sections of $E$ are represented by a monad on $\bP^1\times \bP^1$ :
 
\begin{equation}   \label{agmonad} \begin{matrix}   U_{P_+}(-1,0)\\\oplus \\  U_{P_-}(-1,0) \end{matrix}\xrightarrow{\begin{pmatrix}W_+&0\\0&W_-\\X_{+,0}&X_{-,0}\\X_{+,1}&X_{-,1}\end{pmatrix}}
 \begin{matrix}  V_{P_+}(0,0)\\ \oplus \\V_{P_-}(0,0)\\ \oplus  \\  U_{Q_0}(-1,1)\\ \oplus\\    U_{Q_1} (-1,0)\end{matrix}\xrightarrow{\begin{pmatrix}Y_{+,0}&Y_{-,0}&-Z_0&0\\Y_{+,1}&Y_{-,1}&0&-Z_1\end{pmatrix}}\begin{matrix} V_{Q_0}(0,1)\\ \oplus\\  V_{Q_1}(0,0)\end{matrix} .\end{equation}
  Here, if $V$ is a vector space, $V(i,j)$ denotes $V\otimes \pO(i,j)$; the matrices are those of (\ref{resolution-PQ}).  Expanding, as in (\ref{matrices-caloron}), 
 $${\begin{pmatrix}W_+&0\\0&W_-\\X_{+,0}&X_{-,0}\\X_{+,1}&X_{-,1}\end{pmatrix}} = \begin{pmatrix} \eta-B&0&0\\-D_2&0&0\\ 0& \eta-B&0 \\ 0& (e_-)^TB'& \eta-\Sh   \\0&0&-e_+ \\ \xi &1&0\\A& 1&0 \\A'&0&1   \end{pmatrix},$$
 
\begin{multline*}
\begin{pmatrix}Y_{+,0}&Y_{-,0}&-Z_0&0\\Y_{+,1}&Y_{-,1}&0&-Z_1\end{pmatrix}=\\
= \begin{pmatrix}\xi&0&1&0&0& -\eta+B&0&0\\ A&C_2 &1&0&-C_1&0& -\eta+B&-C_1e_+\\
 A'&C'_2&0&1&-C'_1& 0& (e_-)^TB'&\Sh-\eta -C'_1e_+\end{pmatrix}.\end{multline*}
 The matrices satisfy the monad relations (\ref{monad-cal}) and the genericity constraints \eqref{gencon1},\eqref{gencon2},\eqref{gencon3},\eqref{gencon4}.
 Essentially by row-reducing and column-reducing, one can show that this monad is equivalent to the smaller monad, which is more or less the standard one for bundles on $\bP^1\times\bP^1$ which are trivial on the lines $\{\infty\}\times\bP^1$ and $\bP^1\times \{\infty\}:$
 $$\xymatrix{ \pO(-1,0)^k\ar[rr]^-{\begin{pmatrix} \xi - A\\ \eta -B\\ D\end{pmatrix}}&{\phantom{A}}&\pO(-1,1)^{k}\oplus \pO^{k+2}\ar[rr]^-{\begin{pmatrix} \eta-B& A-\xi&C \end{pmatrix}}&{\phantom{\eta-B A-\xi }} 
 &\pO(0,1)^k}.$$
  
 In a similar way, one can recreate a subsheaf $E_0$ of sections of $E$ with values in the first subspace of the flag along $C_0$, and similarly a subsheaf $E_\infty$ of sections of $E$ with values in the first subspace of the flag along $C_\infty$; this gives our flags along $C_0, C_\infty$.  One can also recreate the trivialization, and get our holomorphic data I.

 \subsection {Nahm complex over the circle, and monads}
 
  The final set of holomorphic data that can be derived from the bundle $E$ is a Nahm complex: following \cite{Charbonneau:2006gu},  the Nahm complexes that we  consider over the circle, viewed as the real line with $s, s+l$ identified, are defined as in subsection \ref{Bow-complexes}, 
with the difference that there is no bifundamental data.  Instead, the fibres at $s$ and at $s+\ell$ are identified.  Thus, the Nahm complex solution is defined on the circle.

 The Nahm complex admits an action of a group of gauge transformations $g$ which are smooth away from the jump points $\lambda_-$ and $\lambda_+,$  and satisfy the appropriate compatibility conditions at the boundary points; using these, one can put the Nahm complex locally, into a  normal form:
 
\begin{lemma}\cite[Prop 1.15]{Hurtubise:1989wh}
\label{lemma:normalform}
\begin{itemize}  
\item Away from the $\lambda$-points, one can gauge  the connection and endomorphism to $\alpha = 0, \beta =B$ constant. This extends to the boundary points, if one is
on $N_{ 0}$ and, in the cases $m=0,1$, on $N_1$  also. 
\item At the $\lambda$-point (translated to $s=0$), over $N_1$, for $m>1$, one can gauge transform  the connection and endomorphism to the
block form
\begin{align}\label{alphaForm}
\alpha_{1} &= \frac{1}{s}\begin{pmatrix}0&0\\0&\mathrm{diag}({\frac{-(m-1)}{2}},{\frac{2-(m-1)}{2}},\dots,{\frac{(m-1)}{2}})\end{pmatrix},\\
\label{betaForm}
\beta_{1}&=\begin{pmatrix}B&s^{\frac{m-1}{2}}C_1e_+\\
s^{\frac{m-1}{2}}(e_-)^TB'&-s^{-1}\Sh + \tilde C'_1e_+\end{pmatrix}.
\end{align}
Here $B$ is $k\times k$, $B'$ is $1\times k$, $C_1 $ is $k\times 1$ ; all these are constant matrices. Also 
$ \tilde C'_1$ is $m\times 1$, with $ (\tilde C'_1)_i= s^{m-i}(C'_1)_i$, and $(C'_1)_i$ constants.
\end{itemize}
\end{lemma}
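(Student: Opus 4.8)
The plan is to handle the two bullets separately: the first is the standard trivialization of a regular connection on an interval, while the second is a resonance analysis near the simple pole, organized by the $\mathfrak{sl}_2$ structure of the residues.

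\medskip

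\noindent\textit{First bullet.} Away from the $\lambda$-points $\alpha$ is smooth, so I would solve the linear parallel-transport equation for $\frac{d}{ds}+\alpha$, producing on any compact subinterval a unique smooth invertible $g(s)$ which, used as a complexified gauge transformation, sends $\alpha$ to $0$. The covariant-constancy equation $\frac{d\beta}{ds}+[\alpha,\beta]=0$ then reduces to $\frac{d\beta}{ds}=0$, so $\beta=B$ is constant. On $N_0$ (and on $N_1$ when $m=0,1$, where the residue data is trivial and there is no pole) $\alpha,\beta$ have finite limits, so $g$ extends continuously to the endpoint, which gives the first bullet.

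\medskip

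\noindent\textit{Normalizing $\alpha$.} For $m>1$ the connection has a genuine simple pole with semisimple residue $0\oplus H$, where $H=\diag\big(\tfrac{-(m-1)}2,\dots,\tfrac{(m-1)}2\big)$ is the Cartan generator of the $m$-dimensional irreducible $\mathfrak{sl}_2$; thus $\alpha$ cannot be trivialized and the target is \eqref{alphaForm}. After a constant gauge transformation placing the residue in block-diagonal form (it already is, by the boundary conditions), I would seek $g(s)=1+s\,g_1+s^2 g_2+\cdots$ clearing the regular and off-diagonal part of $\alpha$ order by order. At order $s^k$ the solvability is governed by $\operatorname{ad}_{0\oplus H}+k\cdot\mathrm{id}$, which fails to be invertible exactly at the integer-spaced weight differences of $H$; I would show the full regular part can nonetheless be removed, so that $\alpha$ reaches $\tfrac1s(0\oplus H)$ precisely, the off-diagonal blocks (carrying the prefactor $s^{(m-1)/2}$) being subleading and cleared in the same recursion, while the surviving invariants are pushed into $\beta$.

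\medskip

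\noindent\textit{Normalizing $\beta$.} With $\alpha=\tfrac1s(0\oplus H)$ fixed, I would integrate covariant-constancy blockwise for $\beta=\left(\begin{smallmatrix}P&Q\\ R&S\end{smallmatrix}\right)$. One gets $\frac{dP}{ds}=0$, so $P=B$ is constant; $\frac{dQ}{ds}=\tfrac1s QH$ and $\frac{dR}{ds}=-\tfrac1s HR$, whose solutions are weight-graded powers of $s$ in which only the highest- and lowest-weight couplings (selected by $e_+,e_-$) survive, each carrying the prefactor $s^{(m-1)/2}$; and $\frac{dS}{ds}=-\tfrac1s[H,S]$, whose meromorphic solution has residue $\Sh$, yielding the pole $-s^{-1}\Sh$ together with a resonant correction in the highest-weight column $e_+$, where the weight count $h_m-h_i=m-i$ reproduces $(\tilde C'_1)_i=s^{m-i}(C'_1)_i$. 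I would finish by using the residual gauge group, the stabilizer of $\tfrac1s(0\oplus H)$ respecting the weight grading and the boundary normalization, to remove the remaining freedom and reduce the data to the constant parameters $B,B',C_1,C'_1$ of \eqref{betaForm}.

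\medskip

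\noindent\textit{Main obstacle.} The crux is the resonance bookkeeping: since the residue has integer-spaced eigenvalues, one cannot gauge away all regular terms naively, and must track, via the $\mathfrak{sl}_2$-representation structure of the residues, precisely which weight components are obstructed, and verify that the leftover obstruction lands in the highest-weight slot picked out by $e_+$. Organizing the cohomology of the normalization operator by this $\mathfrak{sl}_2$-decomposition, as in \cite[Prop.\ 1.15]{Hurtubise:1989wh}, is exactly what guarantees that the surviving data is precisely $B,B',C_1,C'_1$.
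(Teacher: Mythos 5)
A preliminary remark: the paper contains no proof of this lemma at all --- it is imported verbatim, with citation, from \cite[Prop.~1.15]{Hurtubise:1989wh}, and the text proceeds immediately to use it. So your proposal can only be measured against that reference (whose argument indeed runs along the recursive, $\mathfrak{sl}_2$-organized lines you sketch), not against an internal argument of this paper.

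Your first bullet is correct, and your weight-by-weight integration of covariant constancy for $\beta$, once $\alpha=\tfrac1s(0\oplus H)$ has been achieved, is also essentially right: the $O(s^{(m-1)/2})$ boundary requirement on the off-diagonal blocks selects exactly the extreme-weight column and row, giving $s^{(m-1)/2}C_1e_+$ and $s^{(m-1)/2}(e_-)^TB'$, and the resonant column $(\tilde C'_1)_i=s^{m-i}(C'_1)_i$ comes out as you say. The genuine gap is the step you dispose of with ``I would show the full regular part can nonetheless be removed.'' That step is the entire content of the lemma, and it is \emph{false} for a connection with residue $0\oplus H$ taken in isolation: a constant term of $\alpha$ supported on the first superdiagonal of the $m\times m$ block is an $\mathrm{ad}_{0\oplus H}$-eigenvector of eigenvalue $-1$, hence lies exactly in the obstruction space $\ker(\mathrm{ad}_{0\oplus H}+1)$ of your order-by-order recursion. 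Concretely, for $m=2$ the connection $\tfrac1s\diag(-\tfrac12,\tfrac12)+cE_{12}$ has solutions containing logarithms, so it is not gauge-equivalent to its polar part; the same phenomenon occurs on every superdiagonal at the matching order in $s$. (The off-diagonal blocks, by contrast, are genuinely non-resonant thanks to the $s^{(m-1)/2}$ prefactor, so that part of your recursion is fine.) Consequently no recursion on $\alpha$ alone can prove the claim: one must use the coexistence of the covariant-constant $\beta$ with residue $\Sh$ --- completing, with $H$, the irreducible $\mathfrak{sl}_2$ pair --- together with the vanishing conditions on the off-diagonal blocks, to show that the resonant components of $\alpha$ are either forced to vanish or can be traded into $\beta$, where resonant terms do legitimately survive (that is exactly what $\tilde C'_1e_+$ is). You name this structure in your ``main obstacle'' paragraph, but you never actually bring it to bear on the obstructions. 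The same criticism applies, more mildly, to the last step: after $\alpha$ is fixed, covariant constancy still permits entries $c_{ij}s^{j-i}$ in the $S$-block for all $i-1\le j<m$, and their elimination (together with the normalization of the residue to exactly $-\Sh$) by covariant-constant gauge transformations regular at $s=0$ is invoked but not carried out. As it stands, your text is a correct plan whose two decisive steps are missing.
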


Let us denote the union of our two vector bundles $N_0$ and $N_1$ and their glueings at the boundary as one rather unusual bundle $N$ over the circle, whose rank happens to change across $\lambda_\pm$, so that there is a `large' interval $[\lambda_-, \lambda_+]$, and a `small' interval $[\lambda_+, \lambda_-+l]$. The Nahm construction, in its holomorphic geometric version, gives an infinite dimensional monad
\begin{equation} 
\widetilde{H}_{1}(V) \xrightarrow{\cD_1 = \left(\begin{smallmatrix}  d_s + \alpha -(t_3+\i\theta)\\  \beta- \eta\end{smallmatrix}\right)} L^2(V)^{\oplus 2}\xrightarrow{\cD_2 = \left(\begin{smallmatrix}  \eta-\beta,&d_s + \alpha -(t_3+\i\theta)\end{smallmatrix}\right)} \widetilde{H}_{-1}(V). \end{equation}
The function spaces much be chosen with a bit of care, so that both the derivative operator and multiplication by a function that has a pole at the $\lambda$-points are well defined. The  space $\widetilde{H}_{1}$ is thus a subspace of the standard Sobolev space ${H}_{1}$, for example. The Nahm complex through this infinite dimensional monad encodes a bundle $\widetilde E$ over $\bC\times \bC$, which is invariant under $\theta\mapsto \theta + 2\pi/\ell$, and so descends to  the quotient $\bC\times \bC^*$ by this action. 
\begin{proposition}\cite{Charbonneau:2007zd}

 1) Holomorphic data I-IV and V are equivalent.

2) Under this equivalence, the bundles $E$ and $\widetilde E$ that their respective monads encode are isomorphic.

\end{proposition}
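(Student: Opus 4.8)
The plan is to compute the cohomology of the infinite-dimensional Nahm monad explicitly and match it, term by term, with the finite monad \eqref{agmonad} of Holomorphic data IV. First I would use the gauge freedom and Lemma \ref{lemma:normalform} to put the Nahm complex into normal form: on the small interval set $\alpha = 0$ and $\beta = B$ constant, and on the large interval use the block form \eqref{alphaForm}--\eqref{betaForm} with the prescribed simple pole and $su(2)$-residue at each $\lambda$-point. Writing $c = t_3 + \i\theta$ and regarding $\xi$ as the holonomy of $d_s + \alpha - c$ around the $s$-circle of length $l$, the cohomology of the family becomes a function of $(\eta, \xi)$---exactly the parametrization on which $\widetilde E$ descends to $\bC \times \bC^*$.

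Next I would solve the equations defining the cohomology. A class in $\ker \cD_2 / \mathrm{Im}\,\cD_1$ is represented by a pair $(v_1, v_2)$ with $(\eta - \beta)v_1 + (d_s + \alpha - c)v_2 = 0$, taken modulo $\big((d_s + \alpha - c)u,\ (\beta - \eta)u\big)$. On the small interval $d_s + \alpha - c$ is invertible up to its circle monodromy, so the image of $\cD_1$ can be used to normalize the representative so that $v_1$ lies in the rank-$k$ kernel of $d_s + \alpha - c$ on $N_0$. On this rank-$k$ space the constant $\beta = B$ gives the $k \times k$ matrix $B$, the holonomy around the circle gives $A$, and the $\lambda$-point data give $C, D_2, A'$; the covariant constancy of $\beta$ together with the jump compatibility then reproduce precisely the monad relations \eqref{monad-cal} (the first of which, $[A,B]+CD=0$, expresses exactly the failure of the holonomy to commute with $\beta$ across the two jumps).

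The delicate analysis is at the $\lambda$-points. There the requirement that representatives lie in the restricted Sobolev space $\widetilde H_1$ rather than the full $H_1$---continuity of the continuing components together with the admissible pole order forced by the $su(2)$-residue---selects exactly the data carried by the shift matrix $\Sh$, the columns $C_1, C_1'$, and the highest-weight trivializations $v_\pm$. Matching the indicial behaviour of solutions of $d_s + \alpha - c$ against the diagonal residue of $\alpha_1$ in \eqref{alphaForm} shows that the $m$ solutions feeding the large block form a cyclic module under the matrix $M$ of \eqref{gencon4}, their independence being equivalent to the invertibility of $\widetilde N$ there; the remaining conditions \eqref{gencon1}--\eqref{gencon3} follow from the injectivity and surjectivity needed for the cohomology to be locally free of rank two. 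I expect this to be the main obstacle, since it requires a precise local model of the Nahm complex near the poles and a careful reconciliation of the analytic gluing with the algebraic shift $\Xi$.

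Finally, for part 2, both the finite monad \eqref{agmonad} and the reduced Nahm monad are now expressed through the same matrices with the same relations; since the bundle is recovered in each case as the $\xi$-family of cohomologies $\ker/\mathrm{Im}$, the identification of matrices furnishes a canonical isomorphism $E|_{\bC \times \bC^*} \cong \widetilde E$. Extending across $\xi = 0, \infty$ and $\eta = \infty$ then reduces to checking that the flags $E^0_1, E_\infty^1$ along $C_0, C_\infty$ and the framing along $C_\infty \cup F$ correspond to the small-side limits and highest-weight data of the Nahm complex, so that the isomorphism extends over all of $\bP^1 \times \bP^1$.
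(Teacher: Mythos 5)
Your overall route—put the Nahm complex in normal form via Lemma~\ref{lemma:normalform}, reduce the infinite-dimensional Nahm monad to a finite-dimensional one, extract matrices at the $\lambda$-points and from the monodromy, and then identify with the monad \eqref{agmonad}—is essentially the paper's. But your last step hides a genuine gap: you assert that after the reduction "both the finite monad \eqref{agmonad} and the reduced Nahm monad are now expressed through the same matrices with the same relations," i.e.\ a term-by-term identification. This fails, because the spaces are not even of the same dimension. The spaces $\widetilde V_\pm$ of covariant-constant solutions on the two intervals satisfying the boundary conditions have dimension $k+\mathrm{Int}(m/2)$: near a $\lambda$-point the solutions in the $m\times m$ block behave like $s^{(m-1-2j)/2}$, $j=0,\dots,m-1$, and only roughly half of these exponents are admissible in the restricted Sobolev space. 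By contrast the spaces $V_+ = H^0(\bP^1,P_+)$ and $V_-=H^0(\bP^1,P_-)$ appearing in \eqref{agmonad} have dimensions $k+1$ and $k+m+1$. So there is no literal matching of matrices between the two monads; they are genuinely different complexes.

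The paper's resolution of this mismatch is the substantive content of part 2: one recognizes $\widetilde V_+$ and $\widetilde V_-$ as the sections of the sheaves $P_{+,\nu}$, $P_{-,\nu}$ obtained from $P_\pm$ by allowing poles (or forcing zeros) of prescribed order at $\eta=\infty$, namely $\widetilde V_+ \cong V_{+,\mathrm{Int}(m/2)-1}$ and $\widetilde V_-\cong V_{-,-\mathrm{Int}(m/2)-1}$, and the isomorphism of $E$ with $\widetilde E$ is then \emph{mediated} by the zig-zag of maps \eqref{ajustment} (identity on $U_0,U_1,V_0,V_1$, nontrivial on the $\pm$ spaces), rather than given by an equality of monads. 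Concomitantly, your final claim—that the isomorphism extends over all of $\bP^1\times\bP^1$ merely by checking that flags and framings correspond—overreaches: the two monads correspond a priori to \emph{different} compactifications of the bundle along $\eta=\infty$ (the difference is invisible over $\bP^1\times\bC$ and lives entirely at that divisor), and it is exactly the adjustment \eqref{ajustment} that reconciles the two choices. Your local analysis at the $\lambda$-points (indicial exponents against the residue in \eqref{alphaForm}--\eqref{betaForm}, the cyclic structure under $M$, invertibility of $\widetilde N$ as parallel transport) is consistent with the paper, but without the dimension count and the mediation step the proof as written does not close.
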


\begin{proof} The first part is covered in   Charbonneau-Hurtubise \cite[section 5]{Charbonneau:2007zd}. The sheaves $P,Q$ contain all the information for the Nahm complex: 
\begin{itemize}
\item The bundles $N_0$, $N_1$ are simply $H^0(\bP^1, Q_0), H^0(\bP^1, Q_1)$, with the natural trivial connection; 
\item In this trivialization, the matrices $\beta_i$ are simply the matrices $Z_i$ above.
\item The glueing of $N_0, N_1$ is effected at $\lambda_\pm$   by the maps $$H^0(\bP^1, Q_0)\leftarrow H^0(\bP^1, P_\pm)\rightarrow H^0(\bP^1, Q_1).$$
\end{itemize}

We note, that 
the normal form becomes $\alpha_1= 0$, and $\beta_1 = Z_1$, 
if one acts by the singular gauge transformation 
$\diag(1,...,1,   z^{\frac{-(m-1)}{2}}, z^{\frac{2-(m-1)}{2}}, \dots,z^{\frac{(m-1)}{2}})$.  In this gauge $\beta_1 = Z_1$  equals to the matrix defined in (\ref{matrices-caloron}). This of course takes us out of the framework of the our monad of $L^2$ function spaces. 
The poles  here  appear to be  essentially put in by hand; the complex geometrical reason for having them only appears when one goes to the full twistor space, and is linked to the geometry of sections of $L^t$ on the spectral curves, as $t$ tends to zero. This is discussed in Hurtubise and Murray \cite{HurtubiseMurray}.

   There remains the global monodromy of  the connection $\alpha$.  For $m>1$, the normal forms at both ends of the `large' interval (on which the bundle has rank $k+m$) are conjugates by the matrix $\widetilde N$ defined above in equation \eqref{gencon4}; thus the parallel transport for the connection on the large interval will be $\widetilde N$.  The matrix $\widetilde N$ conjugates
 \begin{equation}\label{left-normal} Z_1 = {\begin{pmatrix} -B&C_1e_+\\(e_-)^TB'&-\Sh+C'_1e_+\end{pmatrix}}\end{equation} to
 \begin{equation}\label{right-normal}{\begin{pmatrix} -B&\tilde C_1e_+\\  -(e_-)^TD_2&-\Sh+\tilde C'_1e_+\end{pmatrix}}\end{equation}
 On the small interval one simply takes the identity map as the parallel transport.
 
 The case $m= 1$ is treated similarly. Conversely, given a Nahm complex, one can extract the matrices from the normal forms at the singular points, and the monodromy of the connection.

We exhibit  how the two sets of data define isomorphic bundles $E$ and $\widetilde E$. This is done in \cite{Charbonneau:2007zd}, but we now revisit it from a monad point of view. As we saw, the bundle $E$ was given as the cohomology of a monad (\ref{agmonad}). We want to show that the bundles $E$ and $\widetilde E$ are equivalent by exhibiting some morphisms of respective monads.
 To do this, we first reduce the infinite dimensional monad that encodes the  bundle $\widetilde E$   to a finite dimensional one, quite similar to  (\ref{agmonad}) that arises from the algebraic geometry. We begin by considering the situation at $t_3+i\theta = 0$.
 
We first note that in our complex, the elements $(\chi^1, \chi^2)$ in the kernel of $\cD_2$ can be modified by a coboundary so that $\chi^1$ is compactly supported in the intervals, at a distance $2\eps$ from the boundary. This amounts to solving $\chi^1  = (d_s  +  \alpha)u$ near the boundary, which one can do \cite{Donaldson:1985id}, as we argue momentarily, in spite of the pole of $\alpha$,  and then applying $\cD_1$ to $-u$ times an appropriate bump function.
 
Consider the spaces:
\begin{itemize}
\item $\widetilde V_+$ of solutions to $ (d_s  +  \alpha)v_+ = 0$  on the interval $(\lambda_--\ell,\lambda_-)$ which satisfy the boundary conditions at $\lambda_+$;
\item  $\widetilde V_-$  of solutions to $ (d_s  +  \alpha)v_- = 0$  on the interval $(\lambda_+ - \ell,\lambda_+ )$ which satisfy the boundary conditions at $\lambda_-$;
\item Subspaces $\widetilde  U_\pm$ of $\widetilde  V_\pm$, such that not only $v_\pm$, but also $\beta(v_\pm)$, satisfy the boundary conditions at $\lambda_\pm$; as $\beta$ has a pole at the boundary, this gives a space that is one dimension smaller.
\item $\widetilde U_0=\widetilde V_0$ of values $u_0 = u(\lambda_--\epsilon)$  at   a fixed point $\lambda_--\epsilon $  (outside of the support of $\chi^1$) of solutions on $(\lambda_+ -\ell , \lambda_-)$ to the 
equation 
\begin{equation}(d_s  +  \alpha)u (s) = \chi^1,\label{chione}\end{equation}  
with $u(\lambda_+ - \ell  +\epsilon) =0$; $\widetilde U_0$, of course, is just the fibre of $N$ at $\lambda_--\epsilon$;
\item $\widetilde U_1= \widetilde V_1$ of values $u_1 = u (\lambda_+-\epsilon)$ of  solutions  on $(\lambda_-, \lambda_+)$ to the 
equation $(d_s  +  \alpha)u(s) = \chi^1 $, with $u(\lambda_-+\epsilon) =0$. 
 
\end{itemize}
 
This assembles into a finite dimensional monad to which our infinite dimensional monad reduces; it is basically the same as (\ref{agmonad}), and indeed yields the same bundle. To see this, let us begin over $t_3+i\theta = 0$. Consider a solution   $(\chi^1, \chi^2)$ to $\cD_2(\chi^1, \chi^2) =0$ satisfying the boundary conditions, with $\chi^1 = 0$ near the boundary points. Write this as  $(\chi^1_0, \chi^2_0)$ on $(\lambda_+-l, \lambda_-)$ and as $(\chi^1_1, \chi^2_1)$ on $(\lambda_-, \lambda_+)$. The section  $  \chi^2_0 $  can be written as 
 $v_+ +(\beta_0-\eta) u $  on $(\lambda_+-l, \lambda_-)$ with $u$ solving \eqref{chione}; for it to extend past $\lambda_-$, one needs
 $$ev_{+,0} (v_+) + (\beta_0-\eta) u_0 = -ev_{-,0}(v_-)$$
 for some $v_-\in \widetilde V_-$; here the $ev$ denote evaluation, and $\beta_0$ is the evaluation of $\beta$ at the reference point in the interval.  In a similar vein, writing $\chi^2_1 $ as $v_-(s) +(\beta_1-\eta)u(s)$; again, for it to extend past $\lambda_+$, one has
 $$ev_{-,1}(v_-) + (\beta_1-\eta)u_1 =  -ev_{+,1}(v_+),$$
where of course we are extending our solutions periodically.  The effect of $\cD_1$ on this is to modify the $v_-, v_+$ by solutions of the form $(\beta-\eta)u_-, (\beta-\eta) u_+$ defined on the same intervals as $v_-, v_+$. $\cD_1$ also modifies the $(u_0, u_1)$ by 
adding to it $(-ev_{+,0}(u_+) - ev_{-,0}(u_-), -ev_{-,1}(u_-)- ev_{+,1}(u_+))$. 

Let us now return to 	demonstrating that one can solve  $(d_s+\alpha)u=\chi^1$ with $\chi^1$ in $L^2$, for a $u$   in the desired Sobolev space $H^1$ i.e. with a  $u$ and its derivative that are indeed square integrable at the $\lambda$-point. 
This is a three step argument employing the frame of \cite{Donaldson:1985id} adapted to the Nahm residue: 
1. The form \eqref{alphaForm} of the pole of $\alpha$ implies that there is a unique solution $w_0$  of $(d_s+\alpha)w=0$ of order $s^{\frac{m-1}{2}}$ at the $\lambda$-point.  The leading pole \eqref{betaForm} of $\beta$ in turn implies that $w_1:=\beta w_0, w_2:=\beta^2 w_0, \ldots, w_{m-1}:=\beta^{m-1}w_0$ are solutions of $(d_s+\alpha)w=0$ of respective orders $s^{\frac{m-3}{2}}, s^{\frac{m-5}{2}},\ldots,s^{-\frac{m-1}{2}}$ at the $\lambda$-point. 
2. Thus, the union of the set $\{v_j:=s^{-\frac{m-1-2j}{2}}w_j\}_{j=0}^{m-1}$  and by any frame in continuing components, form a completely regular frame at the $\lambda$-point.  
3. If $\chi^1_0, \chi^1_1, \ldots, \chi^1_{m-1}$ are the components of $\chi^1$ in this frame, then the components of $u$ are given by 
$u_j=s^{\frac{m-1-2j}{2}}\int_0^s t^{-\frac{m-1-2j}{2}} \chi_j(t)dt$ if $m-1-2j\leq0$ and
$u_j=s^{\frac{m-1-2j}{2}}\int_\epsilon^s t^{-\frac{m-1-2j}{2}} \chi_j(t)dt$ if $m-1-2j>0.$ Therefore, if $\chi^1$ is $L^2$ at the $\lambda$-point, then $u$ is indeed in $H^1.$

Moving this picture to an arbitrary $t_3+i\theta$ modifies our spaces in a simple fashion, in a way that only depends on $\xi = e^{t_3+i\theta}$: the solutions $v_\pm$ for general $\xi$ are just the solutions for $\xi= 0$ multiplied by $\xi$. This changes nothing in our formulae except the monodromy, inserting a factor of $\xi$ in one of our evaluation maps. Thus, we have  a finite dimensional 
monad, equivalent to the infinite-dimensional Nahm monad: 
 
$$  \begin{matrix} \widetilde U_+ \\ \oplus\\  \widetilde  U_-\end{matrix}\xrightarrow{\begin{pmatrix}\beta -\eta&0\\ 0&\beta -\eta \\ -\xi ev_{+,0}&-ev_{-,0}\\   -ev_{+,1}&-ev_{-,1}\end{pmatrix}}  \begin{matrix}\widetilde  V_{+}\\ \oplus\\ \widetilde  V_{-}\\ \oplus\\\widetilde   U_{0}\\ \oplus\\\widetilde     U_{1} \end{matrix} \xrightarrow{\begin{pmatrix}\xi ev_{+,0}& ev_{-,0}& \beta_0-\eta&0\\
  ev_{+,1}&ev_{-,1}&0&\beta_1-\eta\end{pmatrix}}
 \begin{matrix}  \widetilde  V_{ 0} \\ \oplus\\ \widetilde  V_{ 1} \end{matrix}. $$
 
 Now we can put in some trivializations and see what these maps become. The maps $ev_{\pm, 0}:  \widetilde V_\pm\rightarrow  \widetilde  U_0$ are onto, and so can be put in a standard form $(1,0)$. Likewise,  the map 
 $ev_{-,1}:  \widetilde V_-\rightarrow  \widetilde  U_1$ is an injection, and can be put in a standard form $(1,0)^T$. On the other hand, the remaining map  $ev_{+,1}$ then has to contain the information of the monodromy of the connection. We are then getting a monad that is quite close to that of (\ref{agmonad}).
  
Unfortunately, the spaces $\widetilde V_\pm$ both have dimension $k+ \mathrm{Int}(m /2)$, where $\mathrm{Int}$ denotes the integer part, while the spaces $  V_\pm$ have dimensions $k+1, k+m+1$. The link can be understood as follows. Recall that $V_\pm$ were the spaces of sections of $P_\pm$; these fit into   exact sequences 
 \begin{equation} 
\begin{matrix}
0&\rightarrow& \pO &\rightarrow& P_+& \rightarrow& Q_0&\rightarrow&0,\\
0&\rightarrow& \pO(m) &\rightarrow& P_-&  \rightarrow& Q_0&\rightarrow&0.  
\end{matrix}
\end{equation}
 These engender a whole sequence of extensions
  \begin{equation} 
\begin{matrix}
0&\rightarrow& \pO(\nu) &\rightarrow& P_{+,\nu}& \rightarrow& Q_0&\rightarrow&0,\\
0&\rightarrow& \pO(m+\nu) &\rightarrow& P_{-,\nu}&  \rightarrow& Q_0&\rightarrow&0, \end{matrix}
\end{equation}
 as sheaves of sections of $ P_{\pm}$ with poles of order $\nu$ allowed at infinity for $\nu>0$ or zeroes of order $-\nu$ forced at infinity for $\nu<0$. We note that the $P_{+,\nu}$ are all isomorphic, away from infinity, and in a fairly natural way.  Their spaces of sections $V_{+, \nu}$ are nested: $V_{+, \nu}\subset  V_{+, \nu+1}$, with the difference being just one extra pole allowed at infinity. The same naturally holds for the $P_{-,\nu}, V_{-, \nu} $. In this vein,  $\widetilde V_+$ should be identified with $V_{+, \mathrm{Int}(m/2)-1}$, and $\widetilde V_-$ should be identified with $V_{-, -\mathrm{Int}(m/2)-1}$. In short, 
 while $E$ is defined on $\bP^1\times \bC$ as a bundle by 
\begin{equation} 
0\rightarrow  E \rightarrow \pi^*P_{+}\oplus \pi^*P_{-} \rightarrow \pi^*Q_{0}(C_\infty) \oplus \pi^*Q_{ 1}, 
\end{equation} 
 the bundle $\widetilde E$ should be thought of as being defined by the (isomorphic over  $\bP^1\times \bC$) sequence
  \begin{equation} 
0\rightarrow \widetilde E \rightarrow \pi^*P_{+, \mathrm{Int}(m/2)-1}\oplus \pi^*P_{-, -\mathrm{Int}(m/2)-1} \rightarrow \pi^*Q_{0}(C_\infty) \oplus \pi^*Q_{ 1}. \end{equation} 
The difference between the two would only emerge on a compactification, where of course many choices are possible.
  
  On the level of monads, the isomorphism  of $E$ and $\widetilde E$ is mediated by the maps
\begin{equation}\label{ajustment} \begin{matrix} 
  V_+&\rightarrow&  V_{+, \mathrm{Int}(m/2)-1}&\leftarrow& V_{+, \mathrm{Int}(m/2)-1}= \widetilde V_+,\\
  V_-&\rightarrow& V_-&\leftarrow& V_{-, -\mathrm{Int}(m/2)-1}= \widetilde V_-,\\
  U_+&\rightarrow& U_{+, \mathrm{Int}(m/2)-1}&\leftarrow& U_{+, \mathrm{Int}(m/2)-1}= \widetilde U_+,\\
  U_-&\rightarrow& U_-&\leftarrow& U_{-, -\mathrm{Int}(m/2)-1}= \widetilde U_-,
  \end{matrix}\end{equation}
 with the maps on $U_0, U_1, V_0, V_1$ simply being the identity. Once one does this, and adjusts for choices of sign, and remembers the normal forms for the Nahm complex, the monads coincide. \end{proof}
 
 \subsection{The case m=0}
 
 This case is somewhat simpler: the first set of data is essentially the same:
 \subsubsection{Holomorphic Data I: Bundles $E$ on $\bP^1\times \bP^1$} 
 This consists of:

\begin{itemize} 
\item A holomorphic bundle $E$ on $\bP^1\times \bP^1$, with $c_1(E) = 0, c_2(E) = k$; 
\item Sub-bundles $E^0_1 = \pO\rightarrow E$ along $C_0$, and $E_\infty^1= \pO  \rightarrow E$ along $C_\infty$;  
\item A trivialisation of $E$ along $C_0\cup  \{\eta = \infty\}$, such that along $C_0$, the subbundle $E^0_1$ is the span of the first vector of the trivialisation, and at the intersection of $\{\eta = \infty\}$ with $C_\infty$, the subbundle $E_\infty^1$ is the span of the second vector.
\end{itemize}

Similarly, the passage to the second set of data is identical, as follows.  
\subsubsection{Holomorphic Data II: Sheaves   on $\bP^1$}

\begin{itemize} 
\item Sheaves $ P_{p,0}^{-p,1}, P_{p,1}^{-p,0}, Q_{p,0}^{-p+1,0}, Q_{p,1}^{-p,1}$ on $\bP^1$, fitting into sequences 
\begin{equation}\label{sheaf-sequences-2.1} 
\begin{matrix}
0&\rightarrow& \pO &\rightarrow& P_{p,0}^{-p,1}&\buildrel{r_{+,0}}\over{\rightarrow}& Q_{p,0}^{-p+1,0}&\rightarrow&0,\\
0&\rightarrow& \pO&\rightarrow& P_{p,0}^{-p,1}&\buildrel{r_{+,1}}\over{\rightarrow}& Q_{p,1}^{-p,1}&\rightarrow&0,\\
0&\rightarrow& \pO &\rightarrow& P_{p,1}^{-p,0}&\buildrel{r_{-,1}}\over{\rightarrow}& Q_{p,1}^{-p,1}&\rightarrow&0,\\
0&\rightarrow& \pO &\rightarrow& P_{p,1}^{-p,0}&\buildrel{r_{-,0}}\over{\rightarrow}& Q_{p+1,0}^{-p,0}&\rightarrow&0,
\end{matrix}
\end{equation}
with   $Q_{p,0}^{-p+1,0}$, $Q_{p,1}^{-p ,1}$ torsion, both of length $k$, satisfying the same irreducibility conditions as in the $m>0$ case.
\item  A shift isomorphism $\Xi$ inducing isomorphisms between the  $P_{p,0}^{-p,1}$, $P_{p,1}^{-p,0},$ $Q_{p,0}^{-p+1,0},$ $Q_{p,1}^{-p,1}$ and $P_{p+1,0}^{-p-1,1},P_{p+1,1}^{-p-1,0} ,Q_{p+1,0}^{-p ,0}, Q_{p+1,1}^{-p-1,1},$ respectively.
\item A trivialization of $P_{p,0}^{-p,1}$ and of $P_{p,1}^{-p,0}$ along $\eta = \infty$.
\end{itemize}
 Again, one can take   resolutions, and obtain  a diagram (\ref{resolution-PQ}); the matrices are given by the following:
 \subsubsection{Holomorphic Data III: Matrices, up to the action of $Gl(k)$} 
\begin{equation}
\begin{matrix}
X_{+,1}&= & \begin{pmatrix}A \end{pmatrix},&&Y_{+,1}&=& \begin{pmatrix}A&C_2 \end{pmatrix},\\ \\
W_+&=& \begin{pmatrix}\eta-B_0\\-D_2\end{pmatrix},&&Z_1 &=&\begin{pmatrix}\eta-B_1\end{pmatrix},\\ \\
X_{-,1}&=& \begin{pmatrix}1 \end{pmatrix},&& Y_{-,1}&=& \begin{pmatrix}1& -C_1 \end{pmatrix},
 \\ \\
X_{-,0}&=& \begin{pmatrix}1 \end{pmatrix},&&Y_{-,0}&=& \begin{pmatrix}1& 0\end{pmatrix},\\ \\
W_-&=& \begin{pmatrix}\eta-B_0 \\  -D_1 A^{-1}\end{pmatrix},&&Z_0&=&\begin{pmatrix}\eta-B_0\end{pmatrix},\\ \\
X_{+,0}&= &\begin{pmatrix}1 \end{pmatrix},&&Y_{+,0}&=& \begin{pmatrix}1&0\end{pmatrix}.\end{matrix}\label{matrices-caloron-2}\end{equation}

Note, that $A$ is invertible; this corresponds to the fact that the bundle $E$ is trivial along $C_0$. The matrices are determined up to a common action of $Gl(k,\bC)$. The commutativity of the diagram gives the constraint, equivalent to the monad condition:
 $$  [A,B_0] + CD=0. $$
 Also, one has that the matrices $B_0,B_1$ giving the sheaves $Q_0,Q_1$ differ by a matrix of rank one:
 $$B_1 =B_0 - C_1D_1.$$
One has genericity conditions; in addition to asking that $A$ be invertible, one stipulates:
  \begin{align}
\begin{pmatrix}A-\xi\\B-\eta\\D\end{pmatrix}
            &\text{ injective for all }\xi,\eta\in \bC,\label{gencon1.1}\\
\begin{pmatrix}\eta-B,&A-\xi,& C\end{pmatrix}
            &\text{ surjective for all }\xi,\eta\in \bC.\label{gencon2.1}
\end{align}
Again this is linked to the irreducibility of the $P,Q$s, and eventually to the local freeness of the bundle $E$.

 \subsubsection { Holomorphic data IV: Monads}
 
As in the case of $m>0$, a monad is built out of the resolutions for $P, Q$, and with matrices above. The formulae are the same as for $m>0$.
 
  \subsubsection { Holomorphic data V: a Nahm complex over the circle}
 
For $m= 0$, the constraints are simpler: the Nahm complexes over the circle that we consider are defined by
\begin{itemize}
\item A bundle $N_0$ of rank $k$ over the interval
$[\lambda_-, \lambda_+]$, equipped with a smooth connection
$d_{\alpha_{0}}$, and a
covariant constant smooth section $\beta_{0}$ of $\mathrm{End}(V_{ 0})$.
\item A bundle $N_1$ of rank $k$ over the interval
$[\lambda_+, 2\pi+ \lambda_-]$, equipped with a smooth connection
$d_{\alpha_{1}}$ and a covariant constant smooth section
$\beta_{1}$ of $\mathrm{End}(N_1).$
\item At the boundary points $\lambda_\pm$, isomorphisms
$i_\pm\colon N_0\rightarrow N_1$, $\pi_\pm= i_\pm^{-1}$
with the gluing condition that
$\beta_{0}-\pi_\pm\beta_{1}i_\pm$ has rank one at the boundary.
\item At both boundary points, extra data consisting of decompositions  of the rank one boundary difference matrices  
$\beta_{ 0}-\pi_-\beta_{1}i_-$, $\beta_{0}-\pi_+\beta_{1}i_+$ into products of pairs of a column and a row vector $(I_-, J_-)$ and $ (I_+,J_+)$:
\begin{equation}
\beta_{0}-\pi_-\beta_{1}i_-= I_-\cdot J_-,\quad \beta_{0}-\pi_+\beta_{1}i_+ = I_+\cdot J_+.\end{equation}
\end{itemize}

The procedure for passing from our other holomorphic data to the Nahm complex is similar to the case of $m>0$, but again simpler: the sections of $Q_i$ are associated to covariant constant sections of $V_i$; the sections of $P_\pm$ are associated to the covariant constant sections near the boundary points $\lambda_\pm$, with the sections of $P_\pm(-1)$ mediating the isomorphisms between $N_0$ and $N_1$ at these points:
$$H^0 (\bP^1, Q_0) \simeq H^0 (\bP^1, P_\pm(-1))\simeq H^0 (\bP^1, Q_1).$$
The maps  $H^0 (\bP^1, P_\pm )\rightarrow H^0 (\bP^1, Q_i)$ on the one hand, have one-dimensional kernels $V_i$; on the other hand $H^0 (\bP^1, P_\pm(-1))$ sits naturally inside $H^0 (\bP^1, P_\pm )$ as sections vanishing at infinity. We then have a decompositions $H^0 (\bP^1, P_\pm ) = H^0 (\bP^1, P_\pm(-1))\oplus V_i $ and so projections $\pi_{\pm,i}: H^0 (\bP^1, P_\pm ) \rightarrow H^0 (\bP^1, P_\pm(-1))$, with kernels $V_i$, and with $\pi_{\pm,1}=\pi_{\pm,0}$ on $H^0 (\bP^1, P_\pm(-1))$, so that $\pi_{\pm,1}-\pi_{\pm,0}$ is of rank one. Multiplication $m_\eta$ by the coordinate $\eta$ defines a map 
$$H^0 (\bP^1, P_\pm(-1))\rightarrow H^0 (\bP^1, P_\pm),$$
 and one has that the covariant constant sections $\beta_i$ of the Nahm complex are defined by $\beta_i = \pi_{\pm,i}\circ m_\eta$. This is the source of the rank one jumps from $\beta_0$ to $\beta_1$.

 One does not need to worry about the poles of covariant constant sections, since the Nahm data is regular.

In terms of matrices, the matrices $B_i$ then get translated into the covariant constant endomorphisms $\beta_i$. The rank one jumps of $\beta$ at the $\lambda_\pm$ are then given by the matrices $C_i, D_i$. For the connection on the circle, the sole invariant is the global holonomy, and this is given by the matrix $A$.

\section{Holomorphic Data for the Taub-NUT}

As noted above, the geometry of the Taub-NUT manifold $\bR^4$ is more closely tied to the Hopf map $\bR^4\rightarrow \bR^3$ (a circle bundle away from the origin) than to the trivial circle bundle over $\bR^3$. Let us recall the geometry of the Taub-NUT manifold  $X_0= \bR^4=\bC^2$ under the $\zeta = 0$ complex structure. 
First, the Hopf map to $\bR^3$ is given by 
$$(\xi,\psi)\mapsto (t_1+\i t_2, t_3) = (\xi\psi, \frac{|\psi|^2-|\xi|^2}{2}).$$
The  fibres of this map  away from the origin are orbits under the action by complex scalars of unit length.
In particular, fixing the direction in $\bR^3$ corresponding to $\zeta=0$, one has the parallel family of lines $L_\eta=\{ (t_1,t_2,t_3)\in \bR^3| \eta= t_1+\i t_2\}$, and, over them in $\bR^4$, the family of conics (cylinders for $\eta\neq 0$) given in complex coordinates $(\xi,\eta)\in \bC^2=\bR^4$ by $\eta = \xi\psi.$ In other words, the restriction $X_0 = \bR^4$ of the twistor space $Z$ to  $\zeta = 0$  is a family of conics $\xi \psi=\eta$. This was compactified above, first into a surface $X_0'$, by adding two points to each conic, so that one has a family of compact conics over $\bC$, and then to a closed surface $X$ by adding a conic  (two lines) over $\eta = \infty$. We refer to subsection \ref{twist}.

\subsection{Restricting to a fibre: from bundles to monads, the case $m>0$}
As for the caloron, our aim will be to exhibit a chain of equivalences:

\begin{theorem} One has equivalent set of  data:
\begin{enumerate}
\item Holomorphic bundle $E$ on $X$;
\item A collection of sheaves $P^{i,j}_{k,l}, Q^{i,j}_{k,l}$ on $\bP^1$;
\item  A tuple of matrices $A, B, C, D_2, A',B',C', B_{h,t}, B_{t,h}$;
\item  A monad $ V_1 {\buildrel{\alpha }\over{\rightarrow} }V_2 {\buildrel{\beta}\over{\rightarrow}}  V_3$ of standard vector bundles on $X$, whose cohomology $\ker( \beta)/\mathrm{Im}(\alpha)$ is the bundle $E.$
\end{enumerate}

\end{theorem}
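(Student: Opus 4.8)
The plan is to transpose the caloron chain of equivalences of Section~3 (data I--IV and the propositions relating them) to the Taub-NUT geometry, isolating the one genuinely new feature: the degeneration of the generic fibre $L_\eta\cong\bC^*$ to the pair of lines $D_\xi\cup D_\psi$ over $\eta=0$, which is the holomorphic shadow of the Hopf fibration and which produces the bifundamental maps $B_{t,h},B_{h,t}$ that are absent for the caloron. Over the open locus $\eta\neq 0$ the conic bundle $Z_0\to\bO(2)$ is a $\bC^*$-bundle closely modelled on the caloron twistor space $\bT$, so on this locus all of the constructions of Section~3 apply essentially unchanged; the genuinely new work is concentrated at the node $\eta=0$ and on the compactifying hexagon of $-1$-curves that makes $X$ a blow-up of $\bP^1\times\bP^1$ at $(0,0)$ and $(\infty,\infty)$. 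As in Section~3, the half-shift interchanging $m\mapsto -m$ lets me restrict to $m>0$, with the $m=0$ case treated separately exactly as there.

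For I~$\leftrightarrow$~II I would, following \eqref{infiniteflags}--\eqref{description-of-F}, form the infinite-rank direct image of $E$ along $Z_0\to\bO(2)$ restricted to $\zeta=0$, use the boundary flags $E^0_1=\pO(-m)\subset E$ along $C_0$ and $E_\infty^1=\pO\subset E$ along $C_\infty$ to cut out the nested subsheaves, and pass to quotients to obtain the sheaves $P^{i,j}_{k,l},Q^{i,j}_{k,l}$ on the $\eta$-line $\bP^1$ fitting into sequences of the shape \eqref{sheaf-sequences}, with the shift operator $\Xi$ (multiplication by the tautological section of $L^\ell$) acting with a zero along $C_0$ and a pole along $C_\infty$. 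The new analysis is at $\eta=0$: a section over the degenerate fibre $D_\xi\cup D_\psi$ is a pair of sections on the two components agreeing at the node, and tracking this gluing across the node is precisely what extracts the edge maps $B_{t,h},B_{h,t}$ together with the rank-one factorizations $\beta(-\ell/2)=B_{h,t}B_{t,h}$, $\beta(\ell/2)=B_{t,h}B_{h,t}$ of Subsection~\ref{Bow-complexes}.

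For II~$\leftrightarrow$~III and III~$\leftrightarrow$~IV I would resolve the $P,Q$ via the resolution of the diagonal \eqref{resolution-diagonal}, obtain the diagram \eqref{resolution-PQ}, normalize its maps to matrices of the shape \eqref{matrices-caloron} so that commutativity yields the monad relations \eqref{monad-cal}, and then assemble the monad \eqref{agmonad}, now over $X$ with the $\pO(i,j)$ replaced by the line bundles built from the hexagon. Adjoining the two edge matrices to the caloron $7$-tuple $A,B,C,D_2,A',B',C'$ produces the required tuple $A,B,C,D_2,A',B',C',B_{h,t},B_{t,h}$, the new entries being exactly the data of the identification of the two boundary fibres at the node. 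Closing the loop IV~$\to$~I is then the standard verification that the cohomology $\ker(\beta)/\mathrm{Im}(\alpha)$ is a holomorphic bundle $E$ on $X$ with $c_1=0$, $c_2=k$, trivial on the generic lines $L_\eta$ and on $F_\xi\cup F_\psi$, and carrying flags of degrees $-m$ and $0$ along $C_0$ and $C_\infty$.

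The main obstacle will be the analysis at the node $\eta=0$. Unlike the caloron, where the $\bC^*$ fibration is smooth everywhere, here one must control direct images across the singular fibre $D_\xi\cup D_\psi$ and on the two blow-ups, show that the gluing data is encoded exactly by the pair $(B_{t,h},B_{h,t})$ and by the factorizations of $\beta$ at $\pm\ell/2$, and re-establish the irreducibility/genericity conditions (including the invertibility of the analogue of $\widetilde N$ from \eqref{gencon4}) in the presence of the edge data, so that the resulting $E$ is locally free on $X$ rather than merely torsion-free. Everything away from $\eta=0$ should follow formally from Section~3, so I expect the real content of the argument to be the bookkeeping of the node, the compatibility of the two factorizations of $\beta$ with the flag degrees forced by the compactification, and the correct choice of standard bundles on the hexagon.
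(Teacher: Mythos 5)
Your framing---transpose the caloron chain and concentrate the new work at the node $\eta=0$---matches the paper's, and your account of data I and II (infinite-rank direct image, boundary flags, maps $\widehat B_{h,t},\widehat B_{t,h}$ whose compositions are multiplication by $\eta$) is consistent with what the paper does. But there is a genuine gap at the heart of II $\leftrightarrow$ III $\leftrightarrow$ IV: your plan to assemble ``the monad \eqref{agmonad}, now over $X$ with the $\pO(i,j)$ replaced by the line bundles built from the hexagon'' presupposes that the caloron's single-monad mechanism survives the transposition, and it does not. That mechanism rests on the shift $\Xi$ being an automorphism of $F$, which is what gives $E|_{\xi=c}=F/(\Xi-c\bI)F$ and the folded sequence \eqref{caloron-sequence-projected}. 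On $X$ the divisor of $\xi$ is $D_\xi-F_\xi+C_0-C_\infty$, so multiplication by $\xi$ is a map $E^{p,q}_{m,n}\to E^{p+1,q}_{m-1,n}(-D_\xi+F_\xi)$ rather than a shift isomorphism (your phrase ``zero along $C_0$ and pole along $C_\infty$'' omits exactly the $D_\xi,F_\xi$ part that causes the failure). Consequently one ruling's worth of data reconstructs $E$ only away from $\xi=0$, respectively $\psi=0$---this is the content of Propositions~\ref{gen1} and \ref{gen2}---so no caloron-shaped monad built from a single ruling can have cohomology $E$ on all of $X$; the paper even notes that Buchdahl-type monads on the blown-up surface exist but ``are not adapted to our purposes.''

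What is missing is the paper's central construction: use \emph{both} blowdown projections $\mu_\xi,\mu_\psi:X\to\bP^1\times\bP^1$, build a monad for each pushdown $E_\xi$ and $E_\psi$ (diagrams \eqref{resolution-PQ-TN-4} and \eqref{resolution-PQ-TN-5}), check that the two monads are isomorphic over $\eta\neq 0$, and then \emph{fuse} them into the single diagram \eqref{resolution-PQ-TN-9} by introducing the auxiliary sheaf $Q$ defined as the quotient \eqref{define-Q2} (equivalently \eqref{define-Q}). It is this fused central block---not the node-gluing of sections you describe---that houses the bifundamental matrices $B_{h,t},B_{t,h}$ at the level of the monad, forces the constraints $B_0=B_{h,t}B_{t,h}$, $B_1=B_{t,h}B_{h,t}$ together with the relations \eqref{monad-Taub}, and yields data IV as \eqref{resolution-PQ-TN-10}. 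With the fusion in hand, the injectivity/surjectivity statements of Proposition~\ref{genericity-maps} show that the cohomology of the fused monad is locally free and agrees with $E_\xi$ on $\xi\neq0$ and with $E_\psi$ on $\psi\neq0$, hence with $E$; without it, your chain has no route from the sheaf or matrix data back to a monad on $X$ whose cohomology is $E$, which is precisely items III and IV of the theorem.
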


Again, the precise description follows.

\subsubsection{Holomorphic Data I: Bundle $E$ on $X$}

Over the  Taub-NUT, a solution to anti-self-duality equations on a bundle $E$  gives us an integrable complex structure on  $E$  over $X_0$; as for the caloron, the asymptotic behaviour of the connection and its curvature give us an extension of this structure for $E$ over $X'_0$.  In addition, along the divisors $C_0, C_\infty$
one has a holomorphic subbundle corresponding to  the (negative) eigenbundle of the asymptotic  monodromy of the operator 
$\frac{\partial}{\partial \theta} + A_\theta$. 

We extend from $X'_0$ to $X$. Again, this is where an asymmetry is introduced between $C_0$ and $C_\infty$, following the example used in other cases, such as monopoles or calorons; one takes a trivialization at $t_3=-\infty$ (the points corresponding to the points $\eta\neq \infty$ in $C_\infty$) in which the eigenvectors of the monodromy form a basis, and extend this to $\eta=\infty$ in the divisor $C_\infty$. As the bundle is trivial on  $\pi^{-1}(U')$  for $U' = U-\{\eta =\infty\}$ for some neighbourhood $U$ of  $\eta=\infty$ in $C_\infty$, one then has a natural extension of the bundle as a trivial bundle on $\pi^{-1}(U)$ using the trivialization on $U$.  This gives a bundle that is trivial on $C_\infty$, and a subline bundle $E_\infty^1$ that is a trivial subline bundle. 

  The result is then a bundle on $X$, and  the subbundle $E^0_1$ along $C_0$ becomes an $\pO(-m)$; we can adjust our trivializations so that the subbundle $\pO(-m)$ corresponds to the second vector of our induced trivialization at $C_0\cap F_\xi$.

More specifically, the data consists of 
\begin{itemize}
\item A rank 2 holomorphic vector bundle $E$ on $X$, with $c_1 = 0, c_2 = k$,
\item A subbundle $E^0_1= \pO(-m)\hookrightarrow E$ along $C_0$, and another  subbundle ${E_\infty^1=\pO\hookrightarrow E}$ along $C_\infty,$
\item A trivialization  on $C_\infty\cup F_\xi\cup F_\psi $, with the subline bundle $E^1_\infty$ corresponding to the first vector; the flag $E^0_1$ corresponds to the second vector in the trivialization at 
$C_0\cap F_\xi$.
\end{itemize}

As for calorons, one can define the sheaves $E_{m,n}^{p,q},$ which one can push down by $\pi$ to $\bP^1$. Pushing down $E$ from $X-C_0-C_\infty$ to $\bP^1$, one gets a  sheaf $F$ of infinite rank. It is again filtered as above, by subsheaves $F^0_{p,q},
F_\infty^{p,q}$, and one has the quotients $P_{p,0}^{-p,1},P_{p,1}^{-p,0} ,Q_{p,0}^{-p+1,0}, Q_{p,1}^{-p,1}$ in the same way, fitting into sequences as before:

\begin{equation}\label{sheaf-sequences-2.2} 
\begin{matrix}
0&\rightarrow& \pO &\rightarrow& P_{p,0}^{-p,1}& \buildrel {i_1}\over{\rightarrow}& Q_{p,0}^{-p+1,0}&\rightarrow&0,\cr
0&\rightarrow& \pO(-m)&\rightarrow& P_{p,0}^{-p,1}&\buildrel {i_2}\over{\rightarrow}& Q_{p,1}^{-p ,1}&\rightarrow&0,  \cr
0&\rightarrow& \pO &\rightarrow& P_{p,1}^{-p,0}& \buildrel {i_3}\over{\rightarrow}& Q_{p,1}^{-p ,1}&\rightarrow&0, \cr
0&\rightarrow& \pO(m) &\rightarrow& P_{p,1}^{-p,0}& \buildrel {i_4}\over{\rightarrow}& Q_{p+1,0}^{-p,0}&\rightarrow&0.  \cr
\end{matrix}
\end{equation}

Again, the $Q_{p,0}^{-p+1,0}$ are supported over $k$ points, counted with multiplicity, and the $Q_{p,1}^{-p ,1}$ over $k+m$ points, also counted with multiplicity; the calculation is an application of the Grothendieck-Riemann-Roch theorem. The big difference is the shift operator $\Xi$, which does not define an isomorphism as it did for calorons. Indeed, let $\xi,\psi$ be our standard holomorphic coordinates on $X$. 
The  function  $\xi$ over  $X$ has a pole along $ C_\infty\cup F_\xi$, and a zero along $C_0\cup D_\xi$; likewise $\psi$ has a pole along $ C_0\cup F_\psi$, and a zero along $C_\infty\cup D_\psi$. Multiplication by  $\xi$ and by $\psi$ induce respective  morphisms 
\begin{align} \Xi: E_{m,n}^{p,q}\rightarrow& E_{m-1,n}^{p+1,q}( -D_\xi+ F_\xi),\\
\Psi:E_{m,n}^{p,q}\rightarrow& E_{m+1,n}^{p-1,q}( -D_\psi+ F_\psi).
\end{align}
Thus, the multiplication operators induce twists by divisors located above $\eta = \xi\psi = 0,\infty$; they no longer induce shift isomorphisms $p\rightarrow p\pm 1$ on the $P,Q$.  
\begin{equation}\label{shift3}\xymatrix { &Q_{p+1,0}^{-p,0}&\\
^{(-D_\xi)}Q_{p ,0}^{-p+1 ,0}\ar[ur]^{\Xi^{-1}}\ar[dr]^I &&^{(-D_\psi)}Q_{p+1,0}^{-p ,0} \ar[ul]^{I}\ar[dl]_{\Psi^{-1}}\\ 
&  Q_{p ,0}^{-p+1,0}&}\end{equation}
 Here $I$ denotes the map given by the natural inclusions.  The sheaves
 $^{(-D_\xi)}Q_{p ,0}^{-p+1 ,0}$,
 $^{(-D_\psi)}Q_{p+1,0}^{-p ,0}$ are the direct images $R^1\pi_*(E_{m-1,n}^{p+1,q}( -D_\xi+ F_\xi))$, $R^1\pi_*(E_{m+1,n}^{p-1,q}( -D_\psi+ F_\psi))$, respectively.
 For the $Q$s, the maps $\Xi^{-1}$, $\Psi^{-1}$ are isomorphisms, as the bundle $E$ is trivial over $\eta =\infty$, and   the $Q$s are supported away from infinity. One then has maps:
$$\widehat B_{h,t} = I\circ \Xi: Q_{p+1,0}^{-p,0}\rightarrow Q_{p ,0}^{-p+1,0},\quad \widehat B_{t,h} = I \circ\Psi: Q_{p ,0}^{-p+1,0}\rightarrow Q_{p+1,0}^{-p,0}.$$
The compositions $i\circ \Xi\circ i\circ\Psi$, $i\circ\Psi\circ i\circ \Xi$ are multiplication by $\eta$.
 
Now consider the diagram of maps
\begin{equation}\label{updown}\begin{matrix} \bP^1&\leftarrow&\bP^1\times X&\rightarrow&X\\
\downarrow &&\downarrow&&\downarrow\\
 \bP^1&\leftarrow&\bP^1\times \bP^1&\rightarrow&\bP^1\end{matrix}.\end{equation}
 One has the sheaves $P$ and $Q$, on the left hand side; pull them back to the central terms, and denote them by the same symbols. One can build over $\bP^1\times X$ a diagram
  \begin{equation} \label{sheaf-diagram}\xymatrix{ &Q_{0,1}^{0,1}(-F)\ar[r]^{\eta-\eta'}&Q_{0,1}^{0,1}\\P^{0,0}_{0,1}(-F)\ar[r]^{\eta -\eta' }\ar[ur]^{-i}\ar[dr]^{-i}&P^{0,0}_{0,1}\ar[ur]\ar[dr]  \\&Q_{ 1,0}^{0,0} (-F)\ar[r]^{\eta-\eta'}&Q_{ 1,0}^{0,0} \\
  Q_{ 1,0}^{0,0}( -C_0-F)^{k} \ar[ur]^{-i}\ar[ddr]^(0.7){ -\widehat B_{h,t}}\ar[r]_(0.5){\psi\bI} &  Q_{ 1,0}^{0,0}(-C_\infty ) \ar[ur]^{\xi\bI}\ar[dddr]^{\widehat B_{h,t}} &   \\
 &&   \\
Q_{0 ,0}^{ 1,0} (-C_\infty-F) \ar[dr]_{-i} \ar[uur]_(0.7){ -\widehat B_{t,h}}\ar[r]^(0.5){\xi\bI} &  Q_{0 ,0}^{ 1,0} (-C_0 ) \ar[uuur]_{\widehat B_{t,h}}\ar[dr]_{\psi\bI} \\
  &Q_{0,0}^{1,0}(-F)\ar[r]^{\eta-\eta'}&Q_{0,0}^{1,0}\\P^{0,1}_{0,0}(-F)\ar[r]^{\eta-\eta'}\ar[ur]^{-i} \ar[dr]^{-i} &P^{0,1}_{0,0}\ar[ur] \ar[dr] \\&Q_{0,1}^{0,1}(-F)\ar[r]^{\eta-\eta'}&Q_{0,1}^{0,1}.
}
\end{equation} 
\begin{remark} \label {monad diagrams} These diagrams will recur, and so some explanation of the notation is in order. We will think of them as defining monads.
Each bundle  of the monad is a direct sum of the bundles in a given column of \eqref{sheaf-diagram}.
 Accordingly, each arrow represents an entry in the matrix representing a map from the sum of each column to the sum of the next; all other entries are zero. The last line is a repeat of the first, and should be identified with it; this was done to  avoid too many crossing arrows. The unmarked arrows are the map $i$ induced by inclusion on the level of the $E^{i,j}_{k,l}$.  The coordinate $\eta'$ is the coordinate on the $\bP^1$ factor, and $\eta$ on $X$ factor. Note that the composition of the maps from the left-hand column to the middle column with the map from the middle column to the right column is zero. We will eventually see that the cohomology of this diagram at the middle column is the bundle $E$.\end{remark}

\begin{proposition} For the sheaves  of (\ref{sheaf-diagram}), the map between the  left hand side  and the middle is an   injection of sheaves, and  the restriction of the map between the middle column and the right hand column to  $P^{0,0}_{0,1}\oplus Q_{ 1,0}^{0,0}(-C_\infty )\oplus Q_{0 ,0}^{ 1,0} (-C_0 )\oplus P^{0,1}_{0,0}$ is a  surjection  for all $\psi, \xi,\eta, \eta'$ with $\xi\psi =\eta $.
 \end{proposition}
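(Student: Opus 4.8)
The plan is to check both assertions locally on $\bP^1\times X$. Every sheaf occurring in \eqref{sheaf-diagram} is a pullback from the base $\bP^1$ of one of the rank-one sheaves $P$ or the torsion sheaves $Q$, twisted by a line bundle pulled back from the $X$-factor; moreover the $Q$'s are torsion, supported over the finitely many base points where the ruling $L_\eta$ jumps. Consequently the kernel of the left-to-middle map and the cokernel of the (restricted) middle-to-right map are themselves supported over this finite set, together with the incidence locus $\{\eta=\eta'\}$. It therefore suffices to verify injectivity and surjectivity on the stalk over a point $(\eta',x)$ with $x=(\xi,\psi,\eta)\in X$ and $\xi\psi=\eta$, reducing everything to fibrewise linear algebra governed by the defining sequences \eqref{sheaf-sequences-2.2} and the relations among the multiplication operators.

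For injectivity I would write the left-to-middle map in the block form dictated by \eqref{sheaf-diagram}. The summands $P^{0,0}_{0,1}(-F)$ and $P^{0,1}_{0,0}(-F)$ are sent to $P^{0,0}_{0,1}$ and $P^{0,1}_{0,0}$ by multiplication by $\eta-\eta'$. Since these $P$'s are pulled back from the base and hence flat over $X$, the function $\eta-\eta'$ is a nonzerodivisor on them (it vanishes on no associated prime), so the two $P$-components of any kernel section vanish. The remaining summands $Q_{1,0}^{0,0}(-C_0-F)$ and $Q_{0,0}^{1,0}(-C_\infty-F)$ are carried into $Q_{1,0}^{0,0}(-F)$ and $Q_{0,0}^{1,0}(-F)$ by the tautological inclusions of twisted-down subsheaves, which are injective; once the $P$-components are killed, the rows into $Q_{1,0}^{0,0}(-F)$ and $Q_{0,0}^{1,0}(-F)$ force the two $Q$-components to vanish as well. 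Thus the map is injective. No separate treatment of $\eta=\eta'$ is needed, because at that locus the inclusion arrows still impose vanishing.

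For surjectivity I would exhibit the restricted middle-to-right map, in the source order $P^{0,0}_{0,1}\oplus Q_{1,0}^{0,0}(-C_\infty)\oplus Q_{0,0}^{1,0}(-C_0)\oplus P^{0,1}_{0,0}$ and target order $Q_{0,1}^{0,1}\oplus Q_{1,0}^{0,0}\oplus Q_{0,0}^{1,0}$, as
\[
\begin{pmatrix} i_3 & 0 & 0 & i_2\\ i_4 & \xi\bI & \widehat B_{t,h} & 0\\ 0 & \widehat B_{h,t} & \psi\bI & i_1\end{pmatrix}.
\]
Away from $C_0$, $C_\infty$ and $\eta=0$ the scalars $\xi,\psi$ are units on the relevant stalks. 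There $i_3$ (or $i_2$) already surjects onto $Q_{0,1}^{0,1}$, being the surjection of one of the sequences \eqref{sheaf-sequences-2.2}; and an elimination using $\widehat B_{h,t}\widehat B_{t,h}=\widehat B_{t,h}\widehat B_{h,t}=\eta\bI$ together with $\xi\psi=\eta$ shows that the image of the central $2\times2$ block $\left(\begin{smallmatrix}\xi&\widehat B_{t,h}\\ \widehat B_{h,t}&\psi\end{smallmatrix}\right)$ is exactly the graph $\{r_3=\xi^{-1}\widehat B_{h,t}r_2\}$ inside $Q_{1,0}^{0,0}\oplus Q_{0,0}^{1,0}$. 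The surjections $i_4$ and $i_1$ coming from the two $P$-summands then supply the missing directions. That the full three-row map has maximal rank at every such stalk is precisely the nondegeneracy condition \eqref{gencon2} (with \eqref{gencon3} and \eqref{gencon4} controlling the rank-$m$ jump), so surjectivity off $\eta=0$ follows from the data at hand.

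The hard part will be the locus $\eta=0$, i.e. the degenerate fibre $D_\xi\cup D_\psi$ over which the conic $\xi\psi=\eta$ splits into two lines and one of $\xi\bI,\psi\bI$ (hence one of $\widehat B_{t,h},\widehat B_{h,t}$) drops rank. There the graph description of the $2\times2$ block is unavailable, and I would instead track separately, on the two components $D_\xi$ and $D_\psi$, how the surjections $i_j$ meet the cokernels of $\xi\bI$ and $\psi\bI$, ruling out a surviving skyscraper in the cokernel by invoking the irreducibility conditions stated after \eqref{sheaf-sequences-2.2} (no common skyscraper quotients of the $P,Q$). This is the step that genuinely uses the full strength of \eqref{gencon1}--\eqref{gencon4} and the rank-one jump data encoded in $C,D$; once this local computation over $\eta=0$ is carried out, surjectivity holds for all $(\xi,\psi,\eta,\eta')$ with $\xi\psi=\eta$.
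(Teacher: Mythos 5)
Your injectivity argument is fine, and it is essentially the paper's: $\eta-\eta'$ is a nonzerodivisor on the lifted $P$'s, which kills the two $P$-components, and the rows into $Q_{1,0}^{0,0}(-F)$ and $Q_{0,0}^{1,0}(-F)$ --- each of which receives contributions only from one (already dead) $P$-summand and one $Q$-summand --- then kill the $Q$-components, since the natural inclusions are injective on the lifted torsion sheaves.

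The surjectivity half has a genuine gap, and it originates in a wrong identity. In the lifted diagram \eqref{sheaf-diagram} the compositions $\widehat B_{h,t}\widehat B_{t,h}$ and $\widehat B_{t,h}\widehat B_{h,t}$ are multiplication by the \emph{base} coordinate $\eta'$ (the coordinate of the $\bP^1$ over which the $Q$'s are supported), not by the function $\eta=\xi\psi$ pulled back from $X$; keeping these two apart is the whole point of the diagram (see the remark following it). Consequently the central block satisfies
\begin{equation*}
\left(\begin{smallmatrix}\psi&-\widehat B_{t,h}\\ -\widehat B_{h,t}&\xi\end{smallmatrix}\right)
\left(\begin{smallmatrix}\xi&\widehat B_{t,h}\\ \widehat B_{h,t}&\psi\end{smallmatrix}\right)=(\eta-\eta')\,\bI ,
\end{equation*}
so it is invertible wherever $\eta\neq\eta'$ --- which, together with the surjectivity of $P^{0,0}_{0,1}\to Q_{0,1}^{0,1}$ from \eqref{sheaf-sequences-2.2}, gives surjectivity there at once --- and its image collapses to your ``graph'' precisely on the incidence locus $\eta=\eta'$. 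The hard locus is therefore not $\{\eta=0\}$ but $\{\eta=\eta'\}$ over the support of the $Q$'s, i.e.\ exactly the jumping conics where the bundle $E$ is encoded; your argument treats this as part of the ``easy'' region and disposes of it by asserting that $i_1,i_4$ ``supply the missing directions,'' justified by appeal to \eqref{gencon2}--\eqref{gencon4}. But those are the nondegeneracy conditions on the \emph{caloron} matrices of Section 3; they are not hypotheses of this proposition (whose only input is the bundle $E$ on $X$), and their Taub-NUT analogues are extracted later, partly by means of this very proposition, so invoking them is circular. The computation you defer to the end is, in any case, never carried out. The paper's proof avoids all of the stalk analysis: the restricted middle-to-right map is exhibited as $R^1\pi_*$ of an explicit $3\times 4$ map $\mu$ of sheaves on $X\times X$, whose cokernel has discrete support on the fibres of $X\times X\to\bP^1\times X$; since those fibres are curves, $R^1\pi_*$ of that cokernel vanishes, and right-exactness of $R^1\pi_*$ in relative dimension one yields surjectivity everywhere, including on the incidence locus. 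Any repair of your fibrewise approach needs an argument of comparable strength exactly there.
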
 
\begin{proof} For the first statement, we have that the horizontal maps are themselves injections, so the result follows. For the second, if we have $\eta\neq \eta'$, the result follows immediately. More generally, we note that the map is $R^1\pi_*$ of a map of sheaves over $X\times X$ (distinguishing the first $X$, and the objects on it, by a prime)
$$E_{0,1}^{0,0}    \oplus E_{1,0}^{0,0} (-C_\infty-F_\psi -F'_\xi ) \oplus E_{0,0}^{1,0}(-C_0-F_\xi- F'_\psi)    \oplus E_{0,0}^{0,1} \buildrel {\mu}\over{\longrightarrow}
E_{0,1}^{0,1}  \oplus E^{0,0}_{1,0}  \oplus E^{1,0}_{0,0}$$
with 
$$\mu = \begin{pmatrix}   1&  0&0 &-1\\ -1 &\xi&\psi'& 0\\ 0& \xi'&\psi &1\end{pmatrix}.$$
The quotient of the second term by the image of $\mu$ has discrete support on the fibres of $X\times X\rightarrow \bP^1\times X$, and so $R^1\pi_*$ of the quotient is zero. This then tells us that the induced map  on $R^1\pi_*$ of the two terms above is surjective.
\end{proof}

 \subsubsection{Holomorphic Data II: Sheaves   on $\bP^1$}
We then have:
\begin{itemize} 
\item Sheaves $P_{0,0}^{0 ,1}, P_{0,1}^{0 ,0}, Q_{0,0}^{1,0} ,Q_{1,0}^{0,0}$, $Q_{0,1}^{0 ,1}$ on $\bP^1$ fitting into sequences (\ref{sheaf-sequences-2.2}), with $Q_{0,0}^{1,0} ,Q_{1,0}^{0,0}$, $Q_{0,1}^{0 ,1}$ torsion, of length $k, k, k+m$, respectively, and supported away from infinity, with  $Q_{0,0}^{1,0} ,Q_{1,0}^{0,0}$ having the same support, and indeed being isomorphic away from $\eta =0$. 
\item Shift maps 
$$\widehat B_{h,t}: Q_{ 1,0}^{ 0,0}\rightarrow Q_{0 ,0}^{1,0},\quad \widehat B_{t,h}: Q_{0,0}^{1,0}\rightarrow Q_{1,0}^{0,0},$$
such that the  compositions $\widehat B_{h,t}\circ \widehat B_{t,h}$, $\widehat B_{t,h}\circ \widehat B_{h,t}$ are multiplication by   $\eta$.
\item A trivialization of $P_{0,0}^{0 ,1}, P_{0,1}^{0 ,0}$ at $\eta =\infty$.
\item An irreducibility condition, which is the same as given in section 3.2.3, page \pageref{IrredCond}.
\item A genericity condition on maps between the sheaves  of (\ref{sheaf-diagram}), ensuring that the left hand side maps injectively to the middle, and that the  restriction of the map between the middle column and the right hand column to  $P^{0,0}_{0,1}\oplus Q_{ 1,0}^{0,0}(-C_\infty )\oplus Q_{0 ,0}^{ 1,0} (-C_0 )\oplus P^{0,1}_{0,0}$ is surjective  to the right hand side, for all $\psi, \xi,\eta, \eta'$ with $\xi\psi =\eta $.
 \end{itemize}

 The modification of the shift maps indicates  that obtaining a monad from the exact sequence for $E$ along a line is not as  straightforward  as in \eqref{caloron-sequence-projected}.  Indeed, while there are monads for bundles on these blown up surfaces (see Buchdahl \cite{Buchdahl}), they are not adapted to our purposes. Rather, we note that there are two families of lines $L^\xi_a :\{\xi= a\}$ and  $L^\psi_b: \{\psi= b\}$, each of them filling out a dense subset of the surface, and we will obtain a monad from each family, then `fuse' the two monads together. This will amount to considering the two blowdown projections 
 $$\bP^1\times \bP^1 \buildrel{\mu_1}\over{\longleftarrow} X \buildrel{\mu_2}\over{\longrightarrow} \bP^1\times \bP^1,$$
 with 
 $$  \mu_\xi(\eta, \xi,\psi) = (\eta, \xi ) \quad \mu_\psi(\eta, \xi,\psi) = (\eta,  \psi), $$
 considering monads for the pushdown $E_\xi =   (\mu_\xi)_*E$ and  $E_\psi =   (\mu_\psi)_*E$, and glueing the two.
 
 One has the resolutions, in which the maps are plus or minus the natural maps, unless otherwise indicated.
 
  \begin{equation}\label{TN-resolution-2}
  \xymatrix{ 
  & &E_{0,1}^{0,0} \ar[r]\ar[ddr] &E_{0,1}^{0,1}\\
  &&\oplus&\oplus \\
   0 \ar[r]  & &E_{1,0}^{0,0} (-F_\xi)  \ar[r]_(0.6){a } \ar[ddr]^{-\xi}&E_{1,0}^{0,0}&\ar[r]&E|_{L^\xi_{a}} \ar[r]&0\\
     &&\oplus&\oplus \\
   &&E_{0,0}^{0,1}  \ar[r]\ar[uuuur] &E_{0,0}^{1,0}} \end{equation}
 and
   \begin{equation}\label{TN-resolution-3}\xymatrix{ 
  & &E_{0,1}^{0,0} \ar[r]\ar[ddr] &E_{0,1}^{0,1}\\
  &&\oplus&\oplus \\
      0\ar[r]& &E_{0,0}^{1,0}(-F_\psi)\ar[r]_(0.6){-\psi}\ar[ddr]^(0.5){b}&E_{1,0}^{0,0}&\ar[r]&E|_{L^\psi_{b}} \ar[r]&0.\\
     &&\oplus&\oplus \\
   &&E_{0,0}^{0,1}  \ar[r]\ar[uuuur] &E_{0,0}^{1,0}
} 
\end{equation}
More globally, set
 \begin{align}
 V_\xi &=\{(\xi',p)\in \bP^1 \times X| \xi(p)= \xi'\}, \\
  V_\psi &=\{( \psi',p)\in \bP^1 \times X| \psi(p)= \psi'\}.
 \end{align}
 Denoting by $\widetilde E, \widetilde E^{ij}_{kl}$ the lifts of $ E,  E^{ij}_{kl}$ to $\bP^1 \times X,$ we have  a sequence 
\begin{equation}\label{TN-resolution-4}\xymatrix{ 
  & \widetilde E_{0,1}^{0,0} \ar[rr]\ar[ddrr] &&\widetilde E_{0,1}^{0,1}\\
  &\oplus&&\oplus \\
0 \ar[r]  & \widetilde E^{0,0}_{1,0}(-F_\xi-[\xi'=\infty])  \ar[rr]_(0.7){\xi'} \ar[ddrr]^{-\xi}&&\widetilde E_{1,0}^{0,0}&\ar[r]&\widetilde E|_{V_\xi } \ar[r]&0\\
     &\oplus&&\oplus \\
   &\widetilde E_{0,0}^{0,1}  \ar[rr]\ar[uuuurr] &&\widetilde E_{0,0}^{1,0}} \end{equation}
 and
\begin{equation}\label{TN-resolution-5}\xymatrix{ 
  & \widetilde E_{0,1}^{0,0} \ar[rr]\ar[ddrr] &&\widetilde E_{0,1}^{0,1}\\
  &\oplus&&\oplus \\
      0\ar[r]& \widetilde E^{1,0}_{0,0}(-F_\psi-[\psi'=\infty])\ar[rr]_(0.7){-\psi} \ar[rrdd]^(0.5){ \psi'}&&\widetilde E_{1,0}^{0,0}&\ar[r]&\widetilde E|_{V_\psi} \ar[r]&0.\\
     &\oplus&&\oplus \\
   &\widetilde E_{0,0}^{0,1}  \ar[rr]\ar[uuuurr] &&\widetilde E_{0,0}^{1,0}} \end{equation}
 
For the first resolution (\ref{TN-resolution-4}), we take a direct image via $\pi^\xi(\eta,\xi,\xi') =  (\eta,\xi')$ onto $\bP^1\times \bP^1$. The line $L^\xi_{\xi'}$ projects isomorphically to $\bP^1$, for $\xi'  \neq 0, \infty$ and so one is essentially getting $E$, for $\xi'\neq 0$, as well as for $\xi'=0, \eta\neq 0$ and for $\xi'=\infty, \eta\neq \infty$. In fact since the projection from $V_\xi$ to $\bP^1\times \bP^1$ is  the blowdown of $D_\xi$ and $F_\xi$, we are getting the pushdown $  \pi^\xi_* (E)$ from $X$ to $\bP^1\times \bP^1$ by $\pi^\xi(p) = (\eta,\xi)$; remembering that the bundle is trivial over $F_\infty$, we obtain  (substituting $\xi$ for $\xi'$, and with the abuse of notation that $P_{i,j}^{k,l}, P_{i,j}^{k,l}$ denote both the sheaves on $\bP^1$ and  their lifts to $\bP^1\times \bP^1$ or $X$):
\begin{equation}\label{pushdownxi}
0\rightarrow  \pi^\xi_* (E) \rightarrow  
	{\scriptstyle   
P_{0,1}^{0,0}\oplus Q_{1,0}^{0,0}(-C_\infty)\oplus P_{0,0}^{0,1}  \xrightarrow{\left(\begin{smallmatrix}1&0&-1\\-1&\xi&0\\ 0&-\widehat B_{h,t}&1\end{smallmatrix}\right)}  Q_{0,1}^{0,1}\oplus Q_{1,0}^{0,0}\oplus  Q_{0 ,0}^{ 1,0} 
	}
\rightarrow R^1\pi^\xi_*(E).
\end{equation}
We note that the support of $R^1\pi^\xi_*(E)$, if it is non-empty, is at $\xi = 0$, and so
\begin{proposition} \label{gen1} The map 
$$  P_{0,1}^{0,0}\oplus Q_{1,0}^{0,0}(-C_\infty)\oplus P_{0,0}^{0,1} \xrightarrow{\begin{pmatrix}1&0&-1\\-1&\xi&0\\ 0&-\widehat B_{h,t}&1\end{pmatrix}} Q_{0,1}^{0,1}\oplus Q_{1,0}^{0,0}\oplus  Q_{0 ,0}^{ 1,0}$$
arising from a bundle $E$ is surjective away from $\xi= 0$.
\end{proposition}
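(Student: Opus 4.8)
The plan is to extract the statement directly from the four-term exact sequence \eqref{pushdownxi}, whose middle arrow is the displayed map; call it $\mu$. By construction \eqref{pushdownxi} arises by applying $R^\bullet\pi^\xi_*$ to the two-term locally free resolution \eqref{TN-resolution-4} of $\widetilde E|_{V_\xi}$, using that the zeroth direct images of both columns vanish (no fibrewise sections survive the growth conditions defining the $E^{i,j}_{k,l}$). The long exact sequence then reads $0\to\pi^\xi_*E\to R^1\pi^\xi_*L\xrightarrow{\mu}R^1\pi^\xi_*M\to R^1\pi^\xi_*E$, where $L,M$ are the left and middle columns; this is exactly \eqref{pushdownxi}, and in particular exactness at $R^1\pi^\xi_*M$ identifies $\mathrm{coker}(\mu)$ with a subsheaf of $R^1\pi^\xi_*(E)$. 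Hence the proposition reduces to the single claim that $R^1\pi^\xi_*(E)$ is supported on $\{\xi=0\}$.

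To establish that localization I would use the geometry of $\pi^\xi$. Under the identification $V_\xi\cong X$ the map $\pi^\xi$ becomes the blow-down $\mu_\xi\colon X\to\bP^1\times\bP^1$, $(\eta,\xi,\psi)\mapsto(\eta,\xi)$, which contracts the two $(-1)$-curves $D_\xi$ and $F_\xi$ to the points $(\eta,\xi)=(0,0)$ and $(\infty,\infty)$ respectively and is an isomorphism elsewhere. By cohomology and base change (equivalently, the theorem on formal functions), $R^1\mu_{\xi*}(E)$ is supported only where a fibre is positive-dimensional and $E$ carries first cohomology along it, hence only possibly at $(0,0)$ and $(\infty,\infty)$. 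Over $(\infty,\infty)$ the fibre is $F_\xi\cong\bP^1$, and the framing of Holomorphic Data I along $C_\infty\cup F_\xi\cup F_\psi$ forces $E|_{F_\xi}\cong\pO\oplus\pO$, so $H^1(F_\xi,E|_{F_\xi})=0$ and the higher direct image vanishes there; this is precisely the triviality over $F$ invoked in passing from \eqref{TN-resolution-4} to \eqref{pushdownxi}. Therefore the support of $R^1\pi^\xi_*(E)$ lies in $\{(\eta,\xi)=(0,0)\}\subset\{\xi=0\}$, the remaining point $(0,0)$ contributing exactly when $E|_{D_\xi}$ is a nontrivial (jumping) restriction, which accounts for the cautious ``if it is non-empty'' in the text.

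Combining the two steps gives the claim at once: off $\{\xi=0\}$ the sheaf $R^1\pi^\xi_*(E)$, and hence $\mathrm{coker}(\mu)$, vanishes, so $\mu$ is surjective there. I expect the only delicate point --- the main obstacle --- to be the bookkeeping behind the first paragraph rather than the support computation: one must verify that the twists $(-F_\xi-[\xi'=\infty])$ in \eqref{TN-resolution-4} push forward to the twist $(-C_\infty)$ appearing in the middle term of \eqref{pushdownxi}, that the zeroth direct images of $L$ and $M$ really vanish so that the long exact sequence begins as written, and that the connecting homomorphism genuinely realizes $\mathrm{coker}(\mu)$ as a subsheaf of $R^1\pi^\xi_*(E)$. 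Granting this identification, which is built into the derivation preceding the statement, surjectivity away from $\xi=0$ is immediate.
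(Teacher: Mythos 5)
Your proposal is correct and takes essentially the same route as the paper: the proposition is read off from the exact sequence \eqref{pushdownxi}, obtained by pushing down the resolution \eqref{TN-resolution-4}, whose middle-map cokernel injects into $R^1\pi^\xi_*(E)$, a sheaf supported only over the blown-down curves. Your explicit elimination of the point $(\infty,\infty)$, using the triviality of $E$ along the $(-1)$-curve $F_\xi$ guaranteed by the framing, merely spells out what the paper leaves implicit in its remark that the support of $R^1\pi^\xi_*(E)$, if non-empty, lies at $\xi=0$.
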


In turn, taking the second resolution (\ref{TN-resolution-5}), we get  sheaves and maps:

\begin{equation}\label{pushdownpsi}
\scriptstyle
0\rightarrow    \pi^\psi_* (E) \rightarrow  P_{0,1}^{0,0}\oplus Q^{1,0}_{0,0}(-C_0)\oplus P_{0,0}^{0,1}  \xrightarrow{\left(\begin{smallmatrix}1&0&-1\\-1&-\widehat B_{t,h}&0\\ 0&\psi&1\end{smallmatrix}\right)}  Q_{0,1}^{0,1}\oplus Q_{1,0}^{0,0}\oplus  Q_{0 ,0}^{ 1,0} \rightarrow R^1\pi^\psi_*(E).
\end{equation}
Again, the support of $R^1\pi^\psi_*(E)$, if it is non-empty, is at $\psi = 0$, and so
\begin{proposition} \label{gen2} The map 
$$  P_{0,1}^{0,0}\oplus Q^{1,0}_{0,0}(-C_0)\oplus P_{0,0}^{0,1}  \xrightarrow{\begin{pmatrix}1&0&-1\\-1&-\widehat B_{t,h}&0\\ 0&\psi&1\end{pmatrix}}  Q_{0,1}^{0,1}\oplus Q_{1,0}^{0,0}\oplus  Q_{0 ,0}^{ 1,0},$$
arising from a bundle $E,$ is surjective away from $\psi= 0$.
\end{proposition}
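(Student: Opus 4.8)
The plan is to run the argument for Proposition~\ref{gen2} in exact parallel with the one just used for Proposition~\ref{gen1}, interchanging the roles of $\xi$ and $\psi$ and of the divisors $D_\xi,F_\xi$ with $D_\psi,F_\psi$. First I would observe that the displayed map is precisely the middle-to-right arrow of the four-term exact sequence \eqref{pushdownpsi}, which is obtained by applying $R\pi^\psi_*$ to the two-term resolution \eqref{TN-resolution-5} of $\widetilde E|_{V_\psi}$. Reading off \eqref{pushdownpsi}, exactness at its third term identifies $\mathrm{coker}(M_\psi)$ with the image of the connecting homomorphism into $R^1\pi^\psi_*(E)$; in particular $\mathrm{coker}(M_\psi)$ is a subsheaf of $R^1\pi^\psi_*(E)$. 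Hence surjectivity of the map at a point is implied by the vanishing of $R^1\pi^\psi_*(E)$ there, and it suffices to locate the support of this higher direct image.

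The key step is the support computation. The restriction of $\pi^\psi$ to $V_\psi$ is the blow-down $\mu_\psi\colon X\to\bP^1\times\bP^1$, $(\eta,\xi,\psi)\mapsto(\eta,\psi)$, which contracts exactly the two curves $D_\psi$ and $F_\psi$ and is an isomorphism elsewhere; thus $R^1\pi^\psi_*(E)=R^1\mu_{\psi*}(E)$ is supported on the images of these two curves. I would then invoke the trivialisation data: since $E$ is trivial along $F_\psi$ (part of the framing on $C_\infty\cup F_\xi\cup F_\psi$), the higher direct image vanishes near the image of $F_\psi$, exactly as the triviality over the fibres at infinity was used in the $\xi$ case. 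The only remaining contribution is the image of $D_\psi$, which (reading $\eta=0,\psi=0$ off the hexagon of $-1$ curves) is the single point $(\eta,\psi)=(0,0)$, lying in $\{\psi=0\}$. Therefore $R^1\pi^\psi_*(E)$, if non-empty, is supported inside $\{\psi=0\}$, and the map of \eqref{pushdownpsi} is surjective for all $\psi\neq0$.

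I expect the main obstacle to be making the local vanishing of $R^1\mu_{\psi*}(E)$ near the image of $F_\psi$ fully rigorous: one must combine the theorem on formal functions (or a Leray argument) with the fact that $\mu_\psi$ contracts $F_\psi$ to a smooth point of $\bP^1\times\bP^1$ and that $E$ restricts trivially along that curve, so that the fibrewise $H^1$ of $E\otimes\pO_{\mathrm{fibre}}$ vanishes. This is the one place where the geometry of the blow-down and the framing genuinely enter; everything else is the formal machinery of higher direct images and is identical, mutatis mutandis, to Proposition~\ref{gen1}.
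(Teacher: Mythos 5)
Your proposal is correct and follows essentially the same route as the paper: the paper likewise reads the displayed map as the middle arrow of the exact sequence \eqref{pushdownpsi} obtained by pushing down the resolution \eqref{TN-resolution-5}, and concludes by noting that the cokernel injects into $R^1\pi^\psi_*(E)$, whose support, if non-empty, lies at $\psi=0$. The only difference is one of detail: the paper leaves the support computation implicit, whereas you spell out the blow-down geometry (contraction of exactly $D_\psi$ and $F_\psi$, with $D_\psi$ mapping to $(\eta,\psi)=(0,0)$) and the use of the trivialization along $F_\psi$ to kill the contribution at $(\infty,\infty)$, which is precisely the justification the paper intends.
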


Now, let us take resolutions of the $P, Q$, as for the caloron. Lifting back to $X$, this gives the following commutative diagram, where $D= D_\xi+D_\psi$, $F = F_\xi + F_\psi$, and where one remembers that the $Q$s are supported away from $F$:
\begin{equation} \label{resolution-PQ-TN-4}
    \xymatrix@C=1pc{  
 & \pO(-F)^{k+m} \ar[r]^-{Z_{ 0,1} } &\pO^{k+m}\ar[rr]& &Q_{0,1}^{0,1}\\
 \pO(-F)^{k+m}\ \  \ar[dr]^{X_{-,0} }\ar[r]^-{W_{ -}}\ar[ur]^{X_{-,1} }& \pO^{k+m+1} \ar[dr]^{Y_{ -,0} }\ar[ur]^{Y_{ -,1} }\ar[rr]&&P_{0,1}^{0,0} \ar[ur]\ar[dr] \\ 
 &  \pO(-F)^{k } \ar[r]^-{Z_{ 1,0}} &\pO^{k}\ar[rr]& &Q_{ 1,0}^{0,0} \\ 
 \pO(-F-C_\infty-F_\xi)^{k}\ar[ur]^-{\xi\bI} \ar[dr]^{ B_{h,t}}\ar[r]^-{Z_{ 1,0}} &  \pO(-C_\infty-F_\xi)^{k}\ar[ur]^{\xi\bI}  \ar[rd]^{ B_{h,t}}\ar[rr] & &Q_{ 1,0}^{0,0}(-C_\infty) \ar[ur]^\xi  \ar[rd]^{ \widehat B_{h,t}} \\
 & \pO(-F)^{k }\ar[r]^-{Z_{0,0}} &\pO^{k} \ar[rr]& &Q_{0 ,0}^{ 1,0} \\
 \pO(-F)^{k }\ \ \ar[dr]^{X_{ +,1}  } \ar[r]^{W_{+}}\ar[ur]^{X_{+,0} }&   \pO^{k+1}\ar[dr]^{Y_{+,1}}\ar[ur]^{Y_{+,0}}\ar[rr]&   &P_{0,0}^{0,1}\ar[dr] \ar[ur]\\
 & \pO(-F)^{k+m} \ar[r]^-{Z_{ 0,1} } &\pO^{k+m}\ar[rr]& &Q_{0,1}^{0,1}.
   }
\end{equation}
As explained in the caloron case, we get a monad from this diagram by summing each of the three columns on the left, and changing signs on the diagonals between the first and second column, so that the diagram is anti-commutative instead of commutative. Let us make these sign changes from now on. The  cohomology of the monad is $(\pi^\xi)^*\pi^\xi_*(E)= E_\xi$,

Again, taking resolutions gives  a diagram, and hence an analogous monad for $E_\psi = (\pi^\psi)^*\pi^\psi_*(E)$
    \begin{equation} \label{resolution-PQ-TN-5}\xymatrix@C=1pc{  
 & \pO(-F)^{k+m} \ar[r]^-{Z_{ 0,1} } &\pO^{k+m}\ar[rr]& &Q_{0,1}^{0,1}\\
 \pO(-F)^{k+m}\ \  \ar[dr]^{-X_{-,0} }\ar[r]^-{W_{ -}}\ar[ur]^{-X_{-,1} }& \pO^{k+m+1} \ar[dr]^{Y_{ -,0} }\ar[ur]^{Y_{ -,1} }\ar[rr]&&P_{0,1}^{0,0} \ar[ur]\ar[dr] \\ 
 &  \pO(-F)^{k } \ar[r]^-{Z_{  1,0}} &\pO^{k}\ar[rr]& &Q_{ 1,0}^{0,0} \\ 
 \pO(-F-C_0-F_\psi)^{k}\ar[dr]^{-\psi\bI} \ar[ur]^{ -B_{t,h}}\ar[r]^-{Z_{0,0}} &  \pO(-C_0-F_\psi)^{k}\ar[dr]^{\psi\bI}  \ar[ru]_{ B_{t,h}} \ar[rr]& &Q^{ 1,0}_{0,0}(-C_0)\ar[dr]^\psi \ar[ru]_{\widehat B_{t,h}} \\
 & \pO(-F)^{k }\ar[r]^-{Z_{0,0}} &\pO^{k} \ar[rr]& &Q_{0 ,0}^{1,0} \\
 \pO(-F)^{k }\ \ \ar[dr]^{-X_{ +,1}  } \ar[r]^{W_{+}}\ar[ur]^{-X_{+,0} }&   \pO^{k+1}\ar[dr]^{Y_{+,1}}\ar[ur]^{Y_{+,0}}\ar[rr]& &P_{0,0}^{0,1}\ar[dr] \ar[ur]\\
 & \pO(-F)^{k+m} \ar[r]^-{Z_{ 0,1} } &\pO^{k+m}\ar[rr]& &Q_{0,1}^{0,1}\\
 }
\end{equation}
 This, over $\eta\neq 0$, is  isomorphic to the monad 
   \begin{equation} \label{resolution-PQ-TN-6}\xymatrix@C=1pc{  
 & \pO(-F)^{k+m} \ar[r]^-{Z_{ 0,1} } &\pO^{k+m}\ar[rr]& &Q_{0,1}^{0,1}\\
 \pO(-F)^{k+m}\ \  \ar[dr]^{-X_{-,0} }\ar[r]^-{W_{ -}}\ar[ur]^{-X_{-,1} }& \pO^{k+m+1} \ar[dr]^{Y_{ -,0} }\ar[ur]^{Y_{ -,1} }\ar[rr]& &P_{0,1}^{0,0} \ar[ur]\ar[dr] \\ 
 &  \pO(-F)^{k } \ar[r]^-{Z_{ 1,0}} &\pO^{k}\ar[rr]& &Q_{ 1,0}^{ 0,0} \\ 
 \pO(-F-C_0-F_\psi)^{k}\ar[dr]^{-\xi^{-1}B_0} \ar[ur]^{ -B_{t,h}}\ar[r]^-{Z_{0,0}} &  \pO(-C_0-F_\psi)^{k}\ar[dr]^{\xi^{-1}B_0}  \ar[ru]_{ B_{t,h}} \ar[rr]& &Q^{ 1,0}_{0,0}(-C_0) \ar[dr]^{ \eta \xi^{-1}} \ar[ru]_{\widehat B_{t,h}} \\
 & \pO(-F)^{k }\ar[r]^-{Z_{0,0}} &\pO^{k}\ar[rr]& &Q_{0 ,0}^{ 1,0} \\
 \pO(-F)^{k }\ \ \ar[dr]^{-X_{ +,1}  } \ar[r]^-{W_{+}}\ar[ur]^{-X_{+,0} }&   \pO^{k+1}\ar[dr]^{Y_{+,1}}\ar[ur]^{Y_{+,0}}\ar[rr]& &P_{0,0}^{0,1}\ar[dr] \ar[ur]\\
 & \pO(-F)^{k+m} \ar[r]^-{Z_{0,1} } &\pO^{k+m}\ar[rr]& &Q_{0,1}^{0,1}.
 }
\end{equation}
The isomorphism is achieved by maps which, on the central (second) column, map 
sections $(u_1, u_2, u_3, u_4,u_5, u_6)$ to $(u_1, u_2, u_3, u_4,u_5 +\xi^{-1} u_4, u_6)$.
The two monads \eqref {resolution-PQ-TN-4} and \eqref{resolution-PQ-TN-6} in turn are the same apart from a central piece. One then has a map between the two which is the identity   on these identical pieces, and on the central piece, corresponding to the sheaves $Q_{ 1,0}^{0,0} ,Q_{0,0}^{ 1,0},$  a morphism


\begin{equation*} \label{resolution-PQ-TN-7.5}
 \xymatrix{      \pO(-F-C_0-F_\psi)^{k} \ar[r]^{Z_{0,0}}  \ar[ddr]^{\xi^{-1}B_{t,h}}
 &\pO(-C_0-F_\psi)^{k}  \ar[ddr]^{\xi^{-1}B_{t,h}}\\  \\
 &\ \pO(-F-C_\infty-F_\xi)^{k}\ar[r]^{Z_{1,0}}&\pO(-C_\infty-F_\xi)^{k}.
}
\end{equation*}
Note, that the $ Q_{ 1,0}^{0,0} ,Q_{0,0}^{ 1,0}$ have the same support: the projections of the lines $\eta$=constant which are jumping lines.  On the level of the $Q$s, $B_{h,t}$ is the map induced on sections  by multiplication by $\xi;$ in turn $B_{t,h}$ is induced by multiplication by  $\psi,$ and $B_0, B_1$ are induced   by $\eta.$ One has that $Z_{0,0}= \eta-B_0$, $Z_{1,0}= \eta-B_1$; since $\eta= \psi\xi$, we have the condition 
\begin{align} B_0 &= B_{h,t}B_{t,h}\\B_1&= B_{t,h}B_{h,t},\end{align} 
ensuring the necessary commutation in the diagrams above.

This monad morphism realizes on $\eta\neq 0 $ the isomorphisms  $E_\xi=E=E_\psi$; indeed $E_\xi$  is isomorphic to  $E$ away from $D_\xi$, and  $E_\psi$ is isomorphic to  $E$ away from $D_\psi$. 

We would like to `fuse' the two monads, to give us $E$. What   works is:

    \begin{equation} \label{resolution-PQ-TN-9}\xymatrix{  
 & \pO(-F)^{k+m} \ar[r]^{Z_{ 0,1} } &\pO^{k+m}\ar[rr]& &Q_{0,1}^{0,1}\\
 \pO(-F)^{k+m}\ \  \ar[dr]^{-X_{-,0} }\ar[r]^{W_{ -}}\ar[ur]^{-X_{-,1} }& \pO^{k+m+1} \ar[dr]^{Y_{ -,0} }\ar[ur]^{Y_{ -,1} }\ar[rr]& &P_{0,1}^{0,0} \ar[ur]\ar[dr] \\ 
 &  \pO(-F)^{k } \ar[r]^(0.6){Z_{ 1,0}} &\pO^{k}\ar[rr]& &Q_{ 1,0}^{0,0} \\ 
  \pO(-F-C_0)^{k}\ar[ur]^{- \bI} \ar[ddr]^(0.7){-B_{h,t}}\ar[r]_(0.5){\psi\bI} &  \pO(-C_\infty-F_\xi)^{k}\ar[ur]^{\xi\bI}  \ar[rddd]^(0.7){ B_{h,t}} \ar[rrd] && \\
 &&&Q \ar[uur]  \ar[rdd]\\
   \pO(-F-C_\infty)^{k}\ar[dr]^{-\bI} \ar[uur]_(0.7){- B_{t,h}}\ar[r]^(0.5){\xi\bI} &  \pO(-C_0-F_\psi)^{k}\ar[dr]^{\psi\bI}  \ar[ruuu]_(0.7){ B_{t,h}}\ar[urr] & \\
 & \pO(-F)^{k }\ar[r]^{Z_{0,0}} &\pO^{k} \ar[rr]& &Q_{0 ,0}^{ 1,0} \\
 \pO(-F)^{k }\ \ \ar[dr]^{-X_{ +,1}  } \ar[r]^{W_{+}}\ar[ur]^{-X_{+,0} }&   \pO^{k+1}\ar[dr]^{Y_{+,1}}\ar[ur]^{Y_{+,0}}\ar[rr]& &P_{0,0}^{0,1}\ar[dr] \ar[ur]\\
 & \pO(-F)^{k+m} \ar[r]^{Z_{ 0,1} } &\pO^{k+m}\ar[rr]& &Q_{0,1}^{0,1}.
 }
\end{equation}

The left hand side is the direct image under $\bP^1\times X\rightarrow X$ of the diagram of sheaves \eqref{sheaf-diagram}. Using the  resolution of the diagonal in $\bP^1\times \bP^1$ by $\pO(-1,-1)\rightarrow \pO\rightarrow \pO|_\Delta$, one has that the terms on the right hand side (except for the fourth term, $Q$) are  indeed  the sheaves 
$Q_{0,1}^{0,1}, P_{0,1}^{0,0} , Q_{ 1,0}^{0,0},Q_{0 ,0}^{ 1,0} , P_{0,0}^{0,1}$; for the  remaining sheaf $Q$, mapping to $  Q_{ 1,0}^{0,0},   Q^{ 1,0}_{0,0}$, we define it as the quotient
  \begin{equation} \label{define-Q2}\xymatrix{  
  Q_{ 1,0}^{0,0}( -C_0)   \ar[ddr]^(0.7){ -\widehat B_{h,t}}\ar[r]_(0.5){\psi\bI} &  Q_{ 1,0}^{0,0}(-C_\infty )  \ar[rd]  \\
 &&Q. \\
Q_{0 ,0}^{ 1,0} (-C_\infty)  \ar[uur]_(0.7){-\widehat B_{t,h}}\ar[r]^(0.5){\xi\bI} &  Q_{0 ,0}^{ 1,0} (-C_0 )  \ar[ur] \\ }
\end{equation}
A bit of diagram chasing shows that it can also be defined  by 
    \begin{equation} \label{define-Q}\xymatrix{  
  \pO(-F-C_0)^{k}  \ar[ddr]^(0.7){ -B_{h,t}}\ar[r]_(0.5){\psi\bI} &  \pO(-C_\infty-F_\xi)^{k} \ar[rd]  \\
&&Q. \\
  \pO(-F-C_\infty)^{k} \ar[uur]_(0.7){ -B_{t,h}}\ar[r]^(0.5){\xi\bI} &  \pO(-C_0-F_\psi)^{k}\ar[ur] \\ }
\end{equation}

The diagram (or monad) (\ref{resolution-PQ-TN-9}) contains the monads \eqref{resolution-PQ-TN-4} and \eqref{resolution-PQ-TN-5} as sub-monads; 
for (\ref{resolution-PQ-TN-4}), one maps the central columns of (\ref{resolution-PQ-TN-4}) to those of (\ref{resolution-PQ-TN-9}) by 
$$(u_1, u_2, u_3, u_4,u_5, u_6)\mapsto (u_1, u_2, u_3 , u_4  , 0, u_5, u_6),$$
and on the first column by 
$$(v_1, v_2, v_3) \mapsto (v_1, \xi v_2, B_{h,t}v_2, v_3)$$
and  similarly for (\ref{resolution-PQ-TN-5}).

Now let us start from   holomorphic data II. One can take the locally free resolutions of $P, Q$ as above, and build sequences (\ref{resolution-PQ-TN-4},\ref{resolution-PQ-TN-5},\ref{resolution-PQ-TN-9}).   One has a proposition that can be proven for the resolutions and the monads
\begin{proposition}\label{genericity-maps}
For the diagram-monad \eqref{resolution-PQ-TN-9}, arising from  holomorphic data II, one has
\begin{itemize}
\item{}The maps $X_{-,1}, X_{-,0}, Y_{-,1}, Y_{-,0}, X_{+, 0} Y_{+,0} $ are surjective.
\item{} The maps $X_{+,1}, Y_{+,1}$ are injective.
\item{} The maps between the second and third terms in the monad  is surjective at every point.
\item{} The map between the first and second term in the monads is injective at every point.
\end{itemize}
\end{proposition}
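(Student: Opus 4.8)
The plan is to split the four assertions into two kinds. Items one and two are statements about the individual block maps $X_{\pm,\bullet}, Y_{\pm,\bullet}$ that appear when one resolves the sheaves $P,Q$; items three and four are pointwise exactness statements for the fused monad \eqref{resolution-PQ-TN-9} as a whole. The organizing principle throughout is the dichotomy between the open locus $\eta\neq 0$ and the degenerate fibre $\eta=0$. Over $\eta\neq 0$ the shift maps $\widehat B_{h,t},\widehat B_{t,h}$ are isomorphisms (their compositions being multiplication by $\eta$), the conic $\xi\psi=\eta$ is a smooth $\bP^1$, and everything reduces to the caloron situation discussed above; at $\eta=0$ the conic degenerates into $D_\xi\cup D_\psi$ and the fusion genuinely enters.

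For items one and two I would work directly from the exact sequences \eqref{sheaf-sequences-2.2} and their locally free resolutions. Each map $r_{\pm,\bullet}\colon P\to Q$ there is a surjection of sheaves whose kernel is a line bundle, namely $\pO$ or $\pO(\pm m)$. The induced maps on the two levels of the resolutions are governed by the cohomology of this kernel: when the kernel is $\pO$ or $\pO(m)$ one has $H^1=0$ and the lifts $X_{-,1},X_{-,0},Y_{-,1},Y_{-,0},X_{+,0},Y_{+,0}$ are surjective; when the kernel is $\pO(-m)$ with $m>0$ one has $H^0=0$ (and $H^1\neq 0$), forcing $X_{+,1},Y_{+,1}$ to be injective rather than surjective. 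The explicit normalized matrix forms (the analogues of \eqref{matrices-caloron}) make these identity blocks and tall-matrix structures manifest, and the irreducibility and genericity conditions carried by Data II --- which in matrix form are the analogues of \eqref{gencon3} and \eqref{gencon4} --- are exactly what rigidify these maps to the expected rank.

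For the pointwise statements I would argue as follows. Away from $\eta=0$ the monad \eqref{resolution-PQ-TN-9} contains the sub-monads \eqref{resolution-PQ-TN-4} and \eqref{resolution-PQ-TN-6}, which are isomorphic there and each compute $E_\xi=E=E_\psi$; hence pointwise injectivity of $\alpha$ and pointwise surjectivity of $\beta$ follow from the caloron genericity. To reach $\eta=0$ I would invoke the sheaf-level results already in hand: Propositions \ref{gen1} and \ref{gen2} give surjectivity of the two pushforward maps away from $\xi=0$ and away from $\psi=0$, while the proposition preceding Data II gives, through the auxiliary map $\mu$ over $X\times X$, both the injectivity of the left-to-middle map and the surjectivity of the restricted middle-to-right map for \emph{all} $(\xi,\psi,\eta,\eta')$ with $\xi\psi=\eta$. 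Translating these back to the resolved monad using $B_0=B_{h,t}B_{t,h}$, $B_1=B_{t,h}B_{h,t}$ and the definition \eqref{define-Q} of the fused middle term $Q$ then yields items three and four, with the irreducibility condition ruling out torsion in the cokernel of $\alpha$ so that $\alpha$ is a genuine subbundle inclusion.

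The main obstacle is precisely the fibre $\eta=0$. There the two rulings $L^\xi_a$ and $L^\psi_b$ both degenerate, $\widehat B_{h,t}$ and $\widehat B_{t,h}$ cease to be invertible (their composition vanishes), and the caloron argument cannot be imported; one must instead verify by hand that the crossing structure of \eqref{resolution-PQ-TN-9}, together with the newly defined middle sheaf $Q$, retains full rank at every point of $D_\xi\cup D_\psi$. I expect the cleanest route is to check this on the sheaf level over $X\times X$, where the cokernel of the assembling map $\mu$ has discrete support on the fibres of $X\times X\to\bP^1\times X$ so that its $R^1\pi_*$ vanishes, and then to confirm that passing to the locally free resolutions preserves pointwise rank. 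Making the fused middle term behave correctly at $\eta=0$ --- reconciling the two descriptions \eqref{define-Q2} and \eqref{define-Q} of $Q$ at the degenerate fibre --- is the delicate computation on which the local freeness of the resulting bundle $E$ ultimately rests.
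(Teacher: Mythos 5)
Your treatment of the first two items coincides with the paper's: the surjectivity of $X_{-,1}, X_{-,0}, Y_{-,1}, Y_{-,0}, X_{+,0}, Y_{+,0}$ and the injectivity of $X_{+,1}, Y_{+,1}$ are read off from the long exact cohomology sequences of \eqref{sheaf-sequences-2.2} and of their twists by $\pO(-1)$, with the kernels $\pO$, $\pO(m)$, $\pO(-m)$ deciding which way the conclusion goes. The fourth item is also fine as you sketch it (your appeal to irreducibility is unnecessary): sheaf-level injectivity passes to the monad because taking global sections over $\bP^1$ is left exact. The genuine gap is in the third item. At each point of $X$, the monad map from the second to the third term is the map induced on global sections over $\bP^1$ by the sheaf map of \eqref{sheaf-diagram}, and surjectivity of a map of sheaves does \emph{not} imply surjectivity of the induced map on $H^0$: the obstruction is $H^1$ of the kernel. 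You invoke the sheaf-level surjectivity (the genericity condition carried by Data II) and then defer the rest to ``confirming that passing to the locally free resolutions preserves pointwise rank,'' but that confirmation is precisely the content of the proposition, and your proposal never supplies it. The paper closes it in one line: the sheaf map is schematically $P\oplus Q(-F)\rightarrow Q\rightarrow 0$ with kernel $P(-F)$, and the sequences \eqref{sheaf-sequences-2.2} force $H^1(\bP^1,P(-1))=0$ (an extension of a torsion sheaf by $\pO$ or $\pO(m)$), so the induced map on sections is onto. This is exactly the technique you use correctly for items one and two; it is absent at the one place where it is essential.

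A secondary misdirection: your organization around the dichotomy $\eta\neq 0$ (caloron sub-monads) versus $\eta=0$ (verification ``by hand'') does not reflect where the difficulty sits. The genericity condition of Data II is stated for \emph{all} $(\xi,\psi,\eta,\eta')$ with $\xi\psi=\eta$, including the degenerate conic, so the sheaf-level input is uniform, and the cohomological upgrade above is likewise uniform in the parameters; no case split is needed. Note also that Propositions \ref{gen1} and \ref{gen2} by themselves only give surjectivity away from $\xi=0$, respectively $\psi=0$, so they cannot be the source of a pointwise statement valid everywhere; starting from Data II one must use the genericity condition itself. Finally, the reconciliation of \eqref{define-Q2} with \eqref{define-Q}, which you single out as the delicate computation, is a separate diagram chase used to set up the middle term of \eqref{resolution-PQ-TN-9}; it is not where the proof of this proposition lives.
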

 
\begin{proof}
For the first two items, one uses the long exact sequence of (\ref{sheaf-sequences-2.2}), and of their twists by $\pO(-1)$. For the third item, we have by our genericity property that the map between the second and third column is surjective on the level of sheaves, i.e. in \eqref{sheaf-diagram}; we want it to be surjective on the level of global sections over $\bP^1$. We note that the map on the level of sheaves can be written schematically as $P\oplus Q(-F)\rightarrow Q\rightarrow 0$; this fits into an exact sequence $0\rightarrow P(-F)\rightarrow P\oplus Q(-F)\rightarrow Q\rightarrow 0$. On the other hand, from the properties (\ref{sheaf-sequences-2.2}), one finds that $H^1(\bP^1,P(-1)) = 0$, guaranteeing surjectivity. For the fourth, one simply notes that one has an injection of sheaves, giving an injection on the level of sections.\end{proof}

Now we consider the sheaf defined by the monad. We note that the surjectivity and injectivity given above show that it is a bundle; furthermore, the fact that it is   $E_\xi$ over $\xi\neq 0$ and $E_\psi$ over $\psi\neq 0$ guarantees that the bundle is isomorphic to $E$, i.e the bundle we started out with. In short:

\begin{proposition} Holomorphic data I and II are equivalent.\end{proposition}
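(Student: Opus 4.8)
The plan is to prove the two implications separately and then check that the constructions are mutually inverse.

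\textbf{From I to II.} First I would run the direct-image construction already set up above: push $E$ forward from $X\setminus(C_0\cup C_\infty)$ to $\bP^1$ by $\pi$, obtaining the infinite-rank sheaf $F$, and filter it by the $F^0_{p,q},F_\infty^{p,q}$. The filtration quotients are the required sheaves $P_{0,0}^{0,1},P_{0,1}^{0,0},Q_{0,0}^{1,0},Q_{1,0}^{0,0},Q_{0,1}^{0,1}$, and the defining sequences \eqref{sheaf-sequences-2.2} drop out of the filtration. I would read off the torsion and length statements for the $Q$'s from the Grothendieck-Riemann-Roch computation indicated in the text, obtain the shift maps $\widehat B_{h,t},\widehat B_{t,h}$ (with $\widehat B_{h,t}\widehat B_{t,h}$ and $\widehat B_{t,h}\widehat B_{h,t}$ equal to multiplication by $\eta$) as the maps induced by $\xi$ and $\psi$ via diagram \eqref{shift3}, and inherit the trivialization at $\eta=\infty$ from that of $E$ along $C_\infty\cup F_\xi\cup F_\psi$. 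The irreducibility condition follows from local freeness of $E$ exactly as in the caloron case, and the genericity condition is packaged precisely by Propositions \ref{gen1} and \ref{gen2} together with the surjectivity statement for the maps of \eqref{sheaf-diagram}.

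\textbf{From II to I.} Conversely, given Data II I would resolve each $P$ and $Q$ by the locally free resolution \eqref{resolution-PQ}, lift everything to $X$, and assemble the fused diagram-monad \eqref{resolution-PQ-TN-9}, defining its auxiliary middle sheaf $Q$ by \eqref{define-Q2} (equivalently \eqref{define-Q}). One checks that the composite of its two maps vanishes, so that this is a genuine monad. Proposition \ref{genericity-maps} then guarantees that the first map is a fibrewise injection and the second a fibrewise surjection at every point of $X$, so the cohomology $\ker/\mathrm{Im}$ at the middle term is locally free and defines a rank-two bundle $E'$ on $X$; the flags along $C_0,C_\infty$ and the trivialization along $C_\infty\cup F_\xi\cup F_\psi$ are then extracted from the appropriate sub- and quotient-objects of the monad, as in the caloron Data IV.

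\textbf{Identification and mutual inversion.} To see $E'\cong E$ I would use that \eqref{resolution-PQ-TN-4} and \eqref{resolution-PQ-TN-5} embed in \eqref{resolution-PQ-TN-9} as sub-monads whose cohomologies are $E_\xi=(\pi^\xi)^*\pi^\xi_*E$ and $E_\psi=(\pi^\psi)^*\pi^\psi_*E$; the induced maps $E_\xi\to E'$, $E_\psi\to E'$ are isomorphisms over $\xi\neq0$ and $\psi\neq0$ respectively, and since $E_\xi\cong E$ off $D_\xi$ and $E_\psi\cong E$ off $D_\psi$, this pins $E'$ to $E$ away from $D_\xi\cup D_\psi$. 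For mutual inversion I would note that composing I$\to$II$\to$I returns $E$ with its flags and framing by this same identification, while II$\to$I$\to$II recovers the original $P,Q$ and shift maps, since pushing \eqref{resolution-PQ-TN-9} back down to $\bP^1$ reproduces the sequences \eqref{pushdownxi} and \eqref{pushdownpsi}, which are built from exactly those sheaves.

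\textbf{Expected main obstacle.} The hard part will be the global identification of $E'$ with $E$ across the fibre $\eta=0$. The shift operators $\Xi,\Psi$ degenerate exactly along the exceptional curves $D_\xi,D_\psi$, so the two blow-down models $E_\xi,E_\psi$ agree with $E$ only off a divisor, and agreement off a divisor does not by itself force an isomorphism of bundles. The whole point of the fusion \eqref{resolution-PQ-TN-9}, and of the correctly twisted middle sheaf $Q$ of \eqref{define-Q2}, is to repair the discrepancy over $D_\xi$ and $D_\psi$ simultaneously; the delicate check is that the resulting locally free $E'$ has the correct local model there, rather than differing from $E$ by an elementary modification along $\eta=0$, and it is here that the irreducibility and genericity hypotheses of Data II, through Proposition \ref{genericity-maps}, do the real work.
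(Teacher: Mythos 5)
Your proposal reproduces the paper's own argument in both directions: I$\to$II by the direct-image/filtration construction with Propositions \ref{gen1} and \ref{gen2} (and the surjectivity statement for \eqref{sheaf-diagram}) supplying the genericity condition, and II$\to$I by assembling the fused diagram-monad \eqref{resolution-PQ-TN-9} with middle sheaf \eqref{define-Q2}, invoking Proposition \ref{genericity-maps} for pointwise injectivity/surjectivity, and identifying the cohomology with $E$ through the sub-monads \eqref{resolution-PQ-TN-4} and \eqref{resolution-PQ-TN-5}.

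The only place where your write-up undersells the argument is the closing identification, which you flag as the main obstacle. As you state it, the two sub-monads pin $E'$ to $E$ only away from $D_\xi\cup D_\psi$, and you are right that agreement off a divisor is not enough; but the identifications actually give agreement off a \emph{finite set}, and then no analysis of local models or elementary modifications along $\eta=0$ is required. Indeed, the counit map $(\pi^\xi)^*\pi^\xi_*E\to E$ fails to be an isomorphism at worst on the exceptional locus $D_\xi\cup F_\xi$ of the blowdown, while $E'\cong E_\xi$ holds over $\xi\neq 0$; hence $E'\cong E$ on $X\setminus(D_\xi\cup C_0\cup F_\xi)$, and symmetrically $E'\cong E$ on $X\setminus(D_\psi\cup C_\infty\cup F_\psi)$. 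By the hexagon intersection pattern these two bad loci meet only in the four points $D_\xi\cap D_\psi$, $C_0\cap F_\psi$, $F_\xi\cap C_\infty$, $F_\xi\cap F_\psi$. Both identifications are induced by canonical maps (counit composed with sub-monad inclusion), so they agree on the dense overlap and glue to an isomorphism $E'\to E$ off these four points; since both sheaves are locally free on the smooth surface $X$, the sheaf $\Hom(E',E)$ is locally free, the morphism extends across the points by Hartogs, and its determinant, nonvanishing off a finite set, is nonvanishing everywhere. This is precisely the content of the paper's one-line conclusion that agreement with $E_\xi$ over $\xi\neq0$ and with $E_\psi$ over $\psi\neq0$ guarantees $E'\cong E$; the irreducibility and genericity hypotheses do their work earlier, through Proposition \ref{genericity-maps}, in ensuring $E'$ is a bundle at all.
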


One can work out as for the caloron \eqref{matrices-caloron} and \eqref{matrices-caloron-2} the maps in the corresponding resolutions. One obtains essentially the same expressions, except that the map $Z_{1,0}$  associated to $Q_{ 1,0}^{0,0}$ and the map $Z_{0 ,0}$ associated to $Q_{0 ,0}^{1,0}$ are no longer the same. Rather, one has
$$Z_{0 ,0}= (\eta-B_{0}),\quad  Z_{1,0} = (\eta-B_{1}),$$
with 
$$ B_{0}= B_{h,t}B_{t,h},\quad B_1=B_{t,h}B_{h,t}.$$
This follows from the fact that one has the relation on coordinates $\eta = \xi\psi$. We notice that if the matrices $B_{t,h},B_{h,t}$ are invertible, the $k\times k$ matrices $B_{ 0},B_{1}$ are conjugate. 

More precisely, one can normalize to matrices:

\subsubsection{Holomorphic Data III: Matrices, up to the action of $Gl(k)$}  
\begin{equation} 
\begin{matrix}
X_{+,1}&= & \begin{pmatrix}A\\A'\end{pmatrix},&&Y_{+,1}&=& \begin{pmatrix}A&C_{2}\\A'&C'_{2}\end{pmatrix},\\ \\
W_{+}&=& \begin{pmatrix}\eta-B_0\\-D_{2}\end{pmatrix},&&Z_{0,1} &=&\begin{pmatrix}\eta-B_{1}&C_{1}e_+\\ (e_-)^TB'&(\eta-\Sh)+C'_{1}e_+\end{pmatrix},\\ \\
X_{-,1}&=& \begin{pmatrix}1&0\\0&1\end{pmatrix},&& Y_{-,1}&=& \begin{pmatrix}1&0&-C_{1}\\0&1&-C'_{1}\end{pmatrix},\\ \\
X_{-,0}&=& \begin{pmatrix}1&0\end{pmatrix},&&Y_{-,0}&=& \begin{pmatrix}1&0&0\end{pmatrix},\\ \\
W_{-}&=& \begin{pmatrix}\eta-B_{1}&0\\ -(e_-)^TB'&\eta-\Sh\\0&-e_+ \end{pmatrix},&&Z_{0,0}&=&\begin{pmatrix}\eta-B_{0}\end{pmatrix},\\ \\
X_{+,0}&= &\begin{pmatrix}1 \end{pmatrix},&&Y_{+,0}&=& \begin{pmatrix}1&0\end{pmatrix}. 
\end{matrix}\label{matrices-TN}
\end{equation}

Here $A,B_i,C ,D ,A ',B ',C' $ are matrices of size $k\times k,k\times k, k\times 2,2\times k, m\times k,1\times k, m\times 2$ respectively. For $C,C' ,D$, the subscripts denote columns or rows, where appropriate.

As noted, one has the constraints
\begin{align}
  B_{0}&= B_{h,t}B_{t,h},& B_1&=B_{t,h}B_{h,t} .
\end{align}
In addition, there are constraints given by the commutativity of the diagram (\ref{resolution-PQ-TN-9}): setting as above $D_{ 1} = e_+A' $, one has

\begin{align}\label{monad-Taub}
A B_0-B_{1}A  + C D &=0,                                           \\
  (e_-)^TB'A  + \Sh A ' - A 'B_0 -C'D&=0,   \\
   -e_+ A'+   \begin{pmatrix}1&0\end{pmatrix}   D&=0.                                    
  \end{align}
Again, there are genericity conditions. These arise from two sources: first, the monad (\ref{resolution-PQ-TN-9}) must  have surjective maps from the second column to the third column, and injective maps from the first column to the second. The other is that using the matrices to define sheaves $P, Q$ from the formulae in the resolutions above, one should get sequences (\ref{sheaf-sequences-2.2}). 

\subsubsection{Holomorphic Data IV: Monads}
This data has already been given. It is essentially the diagram  (\ref{resolution-PQ-TN-9}), without the $P$s and $Q$s,  with an implicit sum along every column, and with its top and bottom lines identified.

   \begin{equation} \label{resolution-PQ-TN-10}\xymatrix{  
 & \pO(-F)^{k+m} \ar[r]^{Z_{ 0,1} } &\pO^{k+m}\\
 \pO(-F)^{k+m}\ \  \ar[dr]^{-X_{-,0} }\ar[r]^{W_{ -}}\ar[ur]^{-X_{-,1} }& \pO^{k+m+1} \ar[dr]^{Y_{ -,0} }\ar[ur]^{Y_{ -,1} }\\ 
 &  \pO(-F)^{k } \ar[r]^(0.6){Z_{ 1,0}} &\pO^{k} \\ 
  \pO(-F-C_0)^{k}\ar[ur]^{- \bI} \ar[ddr]^(0.7){-B_{h,t}}\ar[r]_(0.5){\psi\bI} &  \pO(-C_\infty-F_\xi)^{k}\ar[ur]^{\xi\bI}  \ar[rddd]^(0.7){ B_{h,t}}  \\
 &&&\\
   \pO(-F-C_\infty)^{k}\ar[dr]^{-\bI} \ar[uur]_(0.7){- B_{t,h}}\ar[r]^(0.5){\xi\bI} &  \pO(-C_0-F_\psi)^{k}\ar[dr]^{\psi\bI}  \ar[ruuu]_(0.7){ B_{t,h}}  \\
 & \pO(-F)^{k }\ar[r]^{Z_{0,0}} &\pO^{k}   \\
 \pO(-F)^{k }\ \ \ar[dr]^{-X_{ +,1}  } \ar[r]^{W_{+}}\ar[ur]^{-X_{+,0} }&   \pO^{k+1}\ar[dr]^{Y_{+,1}}\ar[ur]^{Y_{+,0}}\\
 & \pO(-F)^{k+m} \ar[r]^{Z_{ 0,1} } &\pO^{k+m}.
 }
\end{equation}

\subsection{Bow complexes and monads}\label{mpos}

Our aim in this section is to show
\begin{theorem} Holomorphic data I-IV above are equivalent to a bow complex.
\end{theorem}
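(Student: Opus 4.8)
Since Holomorphic data I--IV are already known to be equivalent, and we are in the case $m>0$, it suffices to set up a bijection between the matrix data III of \eqref{matrices-TN} and a bow complex, compatible with the residual $Gl(k,\bC)$ action at the boundary points. The plan is to import the fundamental and large-interval structure essentially verbatim from the caloron Nahm complex treated earlier, and to account for the one genuinely new ingredient, the bifundamental data on the edge, by reading it directly off the shift maps of the fused monad \eqref{resolution-PQ-TN-9}.

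First I would build the bow complex out of the matrices. For the large bundle I set $N_1=H^0(\bP^1, Q_{0,1}^{0,1})$, and for the two pieces of the small bundle $N_0$ near the head $s=-\ell/2$ and the tail $s=\ell/2$ I take $H^0$ of the two rank-$k$ sheaves $Q_{0,0}^{1,0}$ and $Q_{1,0}^{0,0}$, each equipped with its trivial connection. The covariant constant endomorphism $\beta$ is read off from the $Z$-matrices: on the large interval $\beta_1=Z_{0,1}$ sits in the normal form of Lemma \ref{lemma:normalform}, with the prescribed $su(2)$ residues and with top-left blocks specializing to $B_0$ at $\lambda_-$ and $B_1$ at $\lambda_+$; on the small pieces $\beta$ is the constant $B_0=B_{h,t}B_{t,h}$ near the head and $B_1=B_{t,h}B_{h,t}$ near the tail. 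The monodromy of the connection along the large interval is the matrix $\widetilde N$ of \eqref{gencon4}, conjugating the left normal form \eqref{left-normal} into the right one \eqref{right-normal}; this is exactly where the matrix $A$ enters, just as in the caloron. The injections, surjections and rank-one boundary decompositions at the $\lambda$-points are then inherited formally from the caloron argument.

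The genuinely new feature is that the two pieces of $N_0$ do not close up into a circle with identity parallel transport, as for the caloron, but are instead joined across the edge by the bifundamental maps $B_{t,h}\colon N_0|_{-\ell/2}\to N_0|_{\ell/2}$ and $B_{h,t}\colon N_0|_{\ell/2}\to N_0|_{-\ell/2}$ taken directly from the matrix data. The bifundamental decompositions required of a bow complex, $\beta(-\ell/2)=B_{h,t}B_{t,h}$ and $\beta(\ell/2)=B_{t,h}B_{h,t}$, then hold by the very definitions $B_0=B_{h,t}B_{t,h}$, $B_1=B_{t,h}B_{h,t}$ that were forced on the monad side by the coordinate relation $\eta=\xi\psi$. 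Conversely, from a bow complex I would recover the matrices by reading $B_0,B_1$ off the constant $\beta$ on the two pieces of $N_0$, reading $B_{t,h},B_{h,t}$ off the edge, extracting $A,A',B',C,C',D$ from the $\lambda$-point normal forms together with the monodromy $\widetilde N$ on the large interval, and verifying that the monad relations \eqref{monad-Taub} are equivalent to covariant constancy of $\beta$ and the matching conditions. Fixing the $Gl(k)$ normalizations at the boundary points makes the two constructions mutually inverse.

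I expect the principal obstacle to be the precise matching of the edge data with the central piece of \eqref{resolution-PQ-TN-9}: one must check that the shift maps $\widehat B_{t,h},\widehat B_{h,t}$, realized as multiplication by $\psi$ and $\xi$ on the torsion sheaves, descend exactly to the edge maps $B_{t,h},B_{h,t}$ on the fibres of $N_0$, and that the auxiliary sheaf $Q$ of \eqref{define-Q} is precisely the holomorphic avatar of the edge. By contrast, the analytic subtleties at the $\lambda$-points---the pole structure of $\alpha_1,\beta_1$ and the solubility of $(d_s+\alpha)u=\chi^1$ in the correct Sobolev space---are identical to the caloron case and may be imported wholesale from the discussion following Lemma \ref{lemma:normalform}.
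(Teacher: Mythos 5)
Your proposal is correct in substance but takes a genuinely different route from the paper's proof. You work entirely on the algebraic side, transposing the caloron dictionary (the first half of the caloron proposition asserting equivalence of holomorphic data I--IV and V) to the Taub-NUT matrices \eqref{matrices-TN}, and your treatment of the one new ingredient is right: the bifundamental maps are the shift maps $\widehat B_{h,t},\widehat B_{t,h}$ (multiplication by $\xi$ and $\psi$) acting on $N_0|_{-\ell/2}=H^0(\bP^1,Q_{0,0}^{1,0})$ and $N_0|_{\ell/2}=H^0(\bP^1,Q_{1,0}^{0,0})$, the bow decompositions $\beta(-\ell/2)=B_{h,t}B_{t,h}$ and $\beta(\ell/2)=B_{t,h}B_{h,t}$ hold by the very definitions forced by $\eta=\xi\psi$, and the sheaf $Q$ of \eqref{define-Q} is indeed the holomorphic avatar of the edge. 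The paper argues instead from the analytic side: the bow complex encodes a bundle through the infinite-dimensional monad \eqref{infmonad}, in which the caloron's cyclicity is replaced by gluing conditions involving the auxiliary spaces $\widehat U_h,\widehat U_t$ and coboundaries acting by $\left(\begin{smallmatrix}\psi&B_{h,t}\\ B_{t,h}&\xi\end{smallmatrix}\right)$; solving $(d_s+\alpha)u=\chi^1$ on the subintervals reduces this to the finite-dimensional monad \eqref{resolution-PQ-TN-10-twistor}, which after the adjustment \eqref{ajustment} admits a monad morphism to \eqref{resolution-PQ-TN-9} inducing an isomorphism on cohomology. The trade-off is this: your dictionary gives the bijection of data more directly and elementarily, and for the bare statement of the theorem that suffices; the paper's route proves at the same time that the bundle the bow complex encodes analytically is isomorphic to $E$, the cohomology of the algebro-geometric monad, which is what the instanton--bow correspondence actually requires (and what the spectral-sequence argument closing the section re-derives). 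On your route that identification still has to be supplied separately --- it is precisely the monad comparison, and it is not one of the ``analytic subtleties'' importable wholesale from the caloron discussion, since the edge gluing is exactly where the two cases genuinely differ.
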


As we saw from the point of view of the bow solution, our instantons are encoded  by a bow complex, as in section \ref{Bow-complexes}, with, as for calorons, the holomorphic bundle being encoded by an infinite dimensional monad. As before, solutions on an interval $[c, c+\ell]$ containing $\lambda_\pm$,  satisfying appropriate continuity constraints at $\lambda_\pm$, which are the same as for the caloron. The difference is that instead of being periodic, the solutions at $c, c+\ell$ are identified by adding four auxiliary spaces $\widehat U_h,\widehat  U_t, \widehat V_h, \widehat  V_t$ of dimension $k$ to the complex:

\begin{equation} \label{infmonad}
 \widetilde{H}_{1}(V)  \xrightarrow{D_1 = \left(\begin{smallmatrix}  d_s + \alpha -(t_3+it_0), \\  \beta-\eta\\ \psi ev_{c}+ B_{h,t} ev_{c+\ell}\\ \xi ev_{c+\ell}+B_{t,h} ev_{c} \end{smallmatrix}\right)}\begin{matrix} L^2(V)\\ \oplus \\ L^2(V) \\ \oplus\\ \widehat U_h\\ \oplus \\ \widehat  U_t\end{matrix}\xrightarrow{D_2 = \left(\begin{smallmatrix}  -\beta+ \eta &d_s + \alpha -(t_3+it_0)&0&0\\0&ev_c&\xi&  -B_{h,t}\\0&ev_c& - B_{t,h}&\psi \end{smallmatrix}\right)} \begin{matrix}\widetilde{H}_{-1}\\ \oplus\\  \widehat V_h\\ \oplus \\ \widehat  V_t \end{matrix}.  \end{equation}
Recalling that 
$$\beta(c) = B_{h,t}\circ B_{t,h},\quad \beta(c+\ell) = B_{t,h}\circ B_{h,t},$$
one finds that this is indeed a monad.

Our aim is to reduce this to a finite dimensional monad, as we did for calorons. Let $(\chi_1, \chi_2, \widehat u_h, \widehat u_t)$ denote a cocycle; again, as for the caloron, modifying it by a coboundary, we can suppose that $\chi_1$ is supported away from $\lambda_+, \lambda_-$, and $c, c+\ell$.

Consider the spaces:
\begin{itemize}
\item $\widetilde V_+$ of solutions to $ (d_s  +  \alpha)v_+ = 0$  on the interval $(\lambda_--\ell,\lambda_-)$ which satisfy the boundary conditions at $\lambda_+$;
\item  $\widetilde V_-$  of solutions to $ (d_s  +  \alpha)v_- = 0$  on the interval $(\lambda_+ - \ell,\lambda_+ )$ which satisfy the boundary conditions at $\lambda_-$;
\item Subspaces $\widetilde  U_\pm$ of $\widetilde  V_\pm$, such that not only $v_\pm$ satisfy the boundary conditions at $\lambda_\pm$ but also $\beta(v_\pm)$; as $\beta$ has a pole at the boundary, this gives a space that is one dimension smaller.
\item $\widetilde U_{0,-}$ of values $u_0 = u(c)$  at   a fixed point $c $ of solutions on $(\lambda_+ -\ell , \lambda_-)$ to the 
equation 
\begin{equation}(d_s  +  \alpha)u (s) = \chi^1,\label{chioneTN}\end{equation}  
with $u(\lambda_-) =0$; $\widetilde U_0$, of course, is just the fibre of $N$ at $c$;
\item $\widetilde U_1$ of values $u_1 = u (\lambda_+-\epsilon)$ of  solutions  on $(\lambda_-, \lambda_+)$ to the 
equation $(d_s  +  \alpha)u(s) = \chi^1 $, with $u(\lambda_-+\epsilon) =0$. 
\item $\widetilde U_{0,+}$ of values $u_0 = u(c+\ell)$  at   a fixed point $c+\ell $    of solutions on $(\lambda_+ -\ell , \lambda_-)$ to the 
equation 
\begin{equation}(d_s  +  \alpha)u (s) = \chi^1,\label{chitwoTN}\end{equation}  
with $u(\lambda_+) =0$; $\widetilde U_0$, of course, is just the fibre of $N$ at $c$;
\item $\widetilde V_{0,\pm} =\widetilde  U_{0,\pm}$, 
\item $\widetilde V_1 =\widetilde  U_1$.
 \item Spaces $\widetilde  U_h, \widetilde  W_h, \widetilde  U_t, \widetilde  W_t$ of dimension $k$.
\end{itemize}

Our reduction of the monad goes by the procedure used for the caloron; we solve $(d_s  +  \alpha)u (s) = \chi^1$ on the intervals, with initial condition $\chi^1 = 0$ on one end of the interval, and set $
\chi_2 =   v_\pm + (\beta-\eta) u$ on the intervals; this gives a monad that is very close to that for the caloron, but for one thing:
  the cyclicity condition for the caloron gets replaced by the glueing condition coming from our infinite dimensional monad:
$$ ev_c(v_-) + ev_c ((\beta-\eta) u) + \xi u_{h} -B_{h,t}( u_{t}) = 0 $$
in $V_{0,-}$, with $u_{h}\in U_{h}, u_{t}\in U_{t}$, and in $V_{0,+}$,
$$ ev_{c+\ell}(v_+) + ev_{c+\ell} ((\beta-\eta) u) + \xi u_{h} -B_{h,t}( u_t) = 0. $$
These can be modified by coboundaries. Writing an element $s$ of the left hand side of the infinite dimensional monad as $u_\pm + r$, with $u_\pm\in U_\pm$ and $r= 0$ at $\lambda_\pm$, and setting $w_h, w_t$ to be $ev_c(s), ev_{c+\ell}(s)$, the coboundary map changes  $v_{\pm}$ by $(\beta-\eta)(u_{\pm})$, and change $u_{0,-}$ by an arbitrary  $w_h \in  \widetilde  W_h $, and $u_{0,+} $ by an arbitrary  $w_t\in  \widetilde  W_t$; but then, however, $u_h, u_t$ get  modified by 
$$ \begin{pmatrix}u_h\\ u_t\end{pmatrix} \mapsto \begin{pmatrix}u_h\\ u_t\end{pmatrix} + \begin{pmatrix}\psi&   B_{h,t}\\ B_{t,h}&\xi \end{pmatrix}\cdot \begin{pmatrix}w_h\\ w_t\end{pmatrix}.$$
Once one does this,  inserting appropriate twists by line bundles, so that the maps remain finite at infinity, one obtains an anti-commutative diagram
   \begin{equation} \label{resolution-PQ-TN-10-twistor}\xymatrix{  
 & \widetilde U_1(-F)\ar[r]^{\beta_1-\eta} & \widetilde V_1\\
\widetilde U_+(-F) \ \  \ar[dr]^{-ev_c }\ar[r]^{\beta-\eta}\ar[ur]^{-ev_1 }& \widetilde V_+ \ar[dr]^{ev_c}\ar[ur]^{ev_1}\\ 
 & \widetilde U_{0,-}(-F)  \ar[r]^(0.6){\beta(c) -\eta} &\widetilde V_{0,-} \\ 
 \widetilde W_h(-F-C_0) \ar[ur]^{  \bI} \ar[ddr]^(0.7){-B_{h,t}}\ar[r]_(0.5){\psi\bI} & \widetilde  W_h(-C_\infty-F_\xi) \ar[ur]^{\xi\bI}  \ar[rddd]^(0.7){ B_{h,t}}  \\
 &&&\\
 \widetilde W_t(-F-C_\infty) \ar[dr]^{ \bI} \ar[uur]_(0.7){- B_{t,h}}\ar[r]^(0.5){\xi\bI} & \widetilde W_t(-C_0-F_\psi) \ar[dr]^{\psi\bI}  \ar[ruuu]_(0.7){ B_{t,h}}  \\
 & \widetilde U_{0,+}(-F)^{k }\ar[r]^{\beta(c+\ell)} & \widetilde V_{0,+}   \\
 \widetilde U_+(-F) \ \ \ar[dr]^{-ev_1 } \ar[r]^{\beta-\eta}\ar[ur]^{-ev_{c+\ell} }&    \widetilde U_-\ar[dr]^{ev_1}\ar[ur]^{ev_{c+\ell}}\\
 & \widetilde U_1(-F)\ar[r]^{Z_{ 0,1} } & \widetilde V_1.} \end{equation}
 
 Again, this is not quite the same monad  as that produced by the algebraic geometry; there is the same issue as for the caloron: the dimensions of $\widetilde  U_\pm, \widetilde  V_\pm$ do not coincide. However, once one adjusts by the maps of (\ref{ajustment}), one can define a monad morphism between the two which yields the same bundle as its cohomology. We would like to remark that this modification, essentially replacing the sheaves $P$ by twists of $P$ along $\eta =\infty$, corresponds essentially to different choices of compactification of 
 the bundle $E$ along the divisor $F$.

 The link between monads and bundles can be viewed in the same way as for bundles as in the caloron case, as expounded in section 3.2.5. On the other hand, there is a more sophisticated way of linking the  two, involving a spectral sequence, which we will briefly sketch. 
Let us rearrange the monad as the following commutative diagram with all horizontal and slanted sequences exact\newline
\begin{align}\label{primitive-monad}
\xymatrix @C=2pc{
 &0\ar[dr]\\
&&{E}\ar[dr]\\
0\ar[r]&H^1(N\otimes e^*)\ar[dr]_-{D}
\phantom{\begin{matrix} L\\W\\ e\\ e\end{matrix}}
\ar@<4.6ex>[r]^-{Z}
\ar@<2.6ex>[r]^-{J}
\ar@<.3ex>[r]^-{B_{ht}}
\ar[r]+(-9,-6)^{-b_{th}}
\ar[]!<0ex,-5.5ex>;[r]!<0ex,-5.5ex>+(-9,6)^(0.9){b_{ht}}
\ar@<-5.5ex>[r]^-{B_{th}}
& \boxed{\begin{matrix} L^2\\W\\N_h\otimes e_t^*\\N_t\otimes e_h^*\end{matrix}}\ar[r]
\ar@<4.6ex>[dr]^(0.65){D}
\ar@<1ex>[dr]^-{\delta^\lambda I}
\ar[dr]_-{\begin{smallmatrix}[\delta^h(-b_{th},B_{ht})\\-\delta^t(B_{th},b_{ht})]\end{smallmatrix}}
& P\ar[r]\ar[dr] & 0\\
&0\ar[r] & L^2(N\otimes e^*)\ar[r]_-{Z}&H^{-1}(N\otimes e^*)\ar[r]&Q\ar[r]\ar[dr]&0\\
& &&&&0
}
\end{align}

The identification of $E =\mathrm{Ker}\, (P\rightarrow Q)$ with the middle cohomology of the complex \eqref{FirstBowMonad} is via the standard  diagram chasing:
\begin{align}\label{chasing-monad}
\xymatrix @C=3pc{
&&*+[o][F-]{v} 
\ar@{|->}[]!<0.5ex,-.5ex>;[dr]!<0ex,.5ex>+(-.5,1)\\
&
{ }\POS p+(-5,7) *+{\chi}="chi"
\phantom{\begin{matrix} L\\W\\ e\\ e\end{matrix}}
& \tilde{\Psi}_1:=\boxed{\begin{smallmatrix} \Psi_1
\\f
\\ \upsilon_+
\\ \upsilon_-
\end{smallmatrix}}
\ar@{|->}[r]
\ar@{|->}[dr]^{\tilde{D}}
\POS p+(-10,7) *+{\delta\tilde{\Psi}_1}="deltaPsi1" 
\ar@{|.>}[r]+(-16,7)
& u
\ar@{|..>}[r]
\ar@{|..>}[dr] 
\POS p+(-14,7) *+{0}="uzero" 
& 0\\
&
& \Psi_2 
\ar@{|->}[r]_-{Z}
\POS p+(-10,5) *+{\delta\Psi_2=D\chi}="deltaPsi2" 
& \tilde{D}\tilde{\Psi}_1=Z\Psi_2
\ar@{|..>}[r]
&0.
\ar@{|->} "chi";"deltaPsi1"
\ar@{|->} "chi";"deltaPsi2"
}
\end{align}
Given $v\in E$ its image $u$ in $P$ has some preimage $\tilde{\Psi}_1$ under the horizontal map, that is part of that horizontal exact sequence. In turn, $\tilde{D}\tilde{\Psi}_1$ has a preimage $\Psi_2.$ This ensures that the pair $(\tilde{\Psi}_1,\Psi_2)$ is a middle cocycle of the monad.  At the same time, $\tilde{\Psi}_1$ is only defined modulo some $\delta \tilde{\Psi}_1$ that is annihilated by the horizontal map and, therefore, has to be an image of some $\chi,$ due to exactness. Then, changing  $\Psi_2$  by $D\chi$ produces the correct preimage of the new $\tilde{D}(\tilde{\Psi}_1+\delta \tilde{\Psi}_1).$  Since the lowest horizontal sequence is exact, this change in $\Psi_2$ is unique and is equal to $D\chi.$  Therefore, the cocylce $(\tilde{\Psi}_1,\Psi_2)$ is defined up to a coboundary of the monad.  Thus we have a map from $E$ to the middle cohomology of the monad.  It is injective due to the above argument.  It is surjective thanks to the exactness of the sequence $0\rightarrow E\rightarrow P\rightarrow Q\rightarrow 0.$

Diagram \eqref{primitive-monad} is an unfolding of the monad \eqref{FirstBowMonad}, both consisting of infinite-dimensional spaces.  It allows us to focus individually on each subinterval and reinterpret its cohomology in terms of a finite-dimensional monad.  

One way of viewing it is via the spectral sequence.
\subsubsection{The Spectral Sequence}
Let us view (the upside-down of) diagram \eqref{primitive-monad}  as a part of anticommuting\footnote{We adjust the sign of one of the arrows to change commutativity of \eqref{primitive-monad} to anticommutativity.} double complex $E_0^{\bullet,\bullet}:$
$$
\xymatrix{
\vdots&\vdots&\ddots&\\
0\ar[u]\ar[r]&0\ar[u]\ar[r]&0\ar[u]\ar[r]&\hdots\\
L^2\ar[u]\ar[r]^Z&H^{-1}\ar[r]\ar[u]&0\ar[u]\ar[r]&\hdots\\
H^1\ar[u]^{D_0}\ar[r]^{-\tilde{Z}}&L^2_W\ar[u]^{\tilde{D}}\ar[r]&0\ar[u]\ar[r]&\hdots,
}
$$
with $\tilde{D}=(D_1,I,B,b), \tilde{Z}=
\begin{pmatrix}
Z\\J\\B,b
\end{pmatrix},$ and $L^2_W=L^2\oplus W\oplus N_h\otimes e_t^*\oplus N_t\otimes e_h^*.$  As  argued above, the hypercohomology $H^\bullet(T^\bullet(E))$ is $H^1(T^\bullet(E))=\mathrm{Ker}\, \mathbf{D}^\dagger=:E $ and $H^0(T^\bullet(E))=0=H^2(T^\bullet(E)),$ since, as we established by positivity, $H^0(T^\bullet(E))\oplus H^2(T^\bullet(E))=\mathrm{Ker}\,  \mathbf{D}=0.$

The horizontal leaves of the spectral sequence are 
\begin{align}
{
_>E_1^{\bullet,\bullet}:\xymatrix{
 & & \\
 \mathrm{Ker}_{L^2}Z\ar[u]&Q:=\mathrm{Cok}_{H^{-1}}Z\ar[u]\\
 \mathrm{Ker}_{H^1}\tilde{Z}\ar[u]^\alpha& P:=\mathrm{Cok}_{L^2_W}\tilde{Z}\ar[u]^\beta&,
 }
 }\\
\nonumber\\
{
_>E_\infty^{\bullet,\bullet}=_>E_2^{\bullet,\bullet}:\xymatrix{
& & & \\
& \mathrm{Cok}\, \alpha&\mathrm{Cok}\, \beta\\
& \mathrm{Ker}\, \alpha\ar[uul]& \mathrm{Ker} \beta\ar[uul]\\
 &&\ar[uul]&.
 }
 }
\end{align}
Since $\mathrm{Ker}_{L^2} Z=0$ due to the non-degeneracy condition (no continuous eigensections of both $(T_1+\i T_2)_M$ and $(T_1+\i T_2)_{L,R}$).  Also, since hypercohomology is concentrated in degree one, $\mathrm{Ker}\, \alpha=0=\mathrm{Cok}\,\beta.$

We conclude that $E =\mathrm{Ker}\,\mathbf{D}^\dagger=H^1(T^\bullet(E))=\mathrm{Ker} \beta:P\rightarrow Q,$ as argued earlier via  diagram chasing.

The vertical leaves of the spectral sequence are
\begin{align}
_\wedge E_\infty^{\bullet,\bullet}=_\wedge E_1^{\bullet,\bullet}:&\xymatrix{
0=\mathrm{Cok}_{L^2} D_0\ar[r]&\mathrm{Cok}_{H^1}\tilde{D}\\
0=\mathrm{Ker}_{H^1} D_0\ar[r]&\mathrm{Ker}_{L^2_W}\tilde{D}.
}&
\end{align}
Generic holonomy around the bow (or a circle) implies that $D_0f=F$ equation can be solved for any $F.$  It also implies that $D_0f=0$ has no global continuous solutions.  Therefore, we have another identification of $E =\mathrm{Ker}_{L^2_W}\tilde{D}=\mathrm{Ker} (D_1,I,B,b): L^2\oplus W\oplus N_h\otimes e_t^*\oplus N_t\otimes e_h^*\rightarrow H^{-1}$, which in turn can be identified with the linear space of dimension equal to the number of $\lambda$-points. 

It also follows from the hypercohomology vanishing that $\mathrm{Cok} \tilde{D}=0.$

  \subsection{The case $m=0$}

As in Section~\ref{mpos}, one has  a chain of equivalences:

\begin{theorem} One has equivalent sets of  data:
\begin{enumerate}
\item Holomorphic bundles $E$ on $X$;
\item Sheaves $P^{i,j}_{k,l}, Q^{i,j}_{k,l}$ on $\bP^1$;
\item  A tuple of matrices;
\item  A monad $ V_1 {\buildrel{\alpha }\over{\rightarrow} }V_2 {\buildrel{\beta}\over{\rightarrow}}  V_3$ of standard vector bundles on $X$, whose cohomology $\ker( \beta)/\mathrm{Im}(\alpha)$ is the bundle $E.$
\item A bow complex for $m=0$ (see section 2) .
\end{enumerate}

\end{theorem}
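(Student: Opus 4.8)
The plan is to run the same chain of equivalences established for the $m>0$ Taub-NUT case in Section~\ref{mpos}, exploiting the simplifications that occur when $m=0$, and then to close the loop by matching the finite-dimensional reduction of the bow monad against the algebro-geometric monad. For the equivalences among data I, II, III, and IV, I would argue that the construction of Section~\ref{mpos} transfers essentially verbatim: one pushes $E$ down from $X\setminus(C_0\cup C_\infty)$ to $\bP^1$ to obtain the infinite-rank sheaf $\pF$ with its flags, reads off the quotients $P,Q$ fitting into the sequences \eqref{sheaf-sequences-2.2}, and takes locally free resolutions to assemble the fused monad \eqref{resolution-PQ-TN-9}. The changes are bookkeeping: the subbundles along $C_0,C_\infty$ are now both $\pO$, the torsion sheaves $Q_{0,0}^{1,0},Q_{1,0}^{0,0},Q_{0,1}^{0,1}$ all have length $k$, and the matrix data normalizes to the $m=0$ caloron shapes of \eqref{matrices-caloron-2}, with $A$ invertible, augmented by the edge maps $B_{h,t},B_{t,h}$. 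The surviving constraints are the single monad relation $[A,B_0]+CD=0$, the rank-one relation $B_1=B_0-C_1D_1$, the factorizations $B_0=B_{h,t}B_{t,h}$ and $B_1=B_{t,h}B_{h,t}$ forced by $\eta=\xi\psi$, and the genericity conditions \eqref{gencon1.1}, \eqref{gencon2.1} together with invertibility of $A$. Proposition~\ref{genericity-maps} and the injectivity/surjectivity arguments apply unchanged, so the cohomology of the monad is a bundle isomorphic to $E$.

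For the last item, the bow complex for $m=0$, I would mirror Section~\ref{mpos}. Starting from a bow solution, the bow Dirac operator $\mathbf{D}^\dagger_t$ restricted to $\zeta=0$ produces an infinite-dimensional monad of the form \eqref{infmonad} whose middle cohomology is $E$, as in \eqref{FirstBowMonad}; here the edge data $B_{h,t},B_{t,h}$ glue the two ends $c,c+\ell$ of the interval in place of the periodic identification used for the caloron. The reduction to a finite-dimensional monad proceeds exactly as there: modify a cocycle by a coboundary so that $\chi^1$ is supported away from $\lambda_\pm$ and the endpoints, solve $(d_s+\alpha)u=\chi^1$ on each subinterval with a vanishing initial condition, and record the resulting evaluation and gluing data. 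The $m=0$ case is strictly easier than $m>0$: since the Nahm data is regular, $\alpha$ has no pole, so the three-step Donaldson-residue argument needed to solve \eqref{chioneTN}--\eqref{chitwoTN} in $H^1$ is unnecessary and one simply integrates. The resulting monad is the $m=0$ specialization of \eqref{resolution-PQ-TN-10-twistor}, with the edge gluing supplying the central fusion piece.

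It remains to identify this reduced monad with the algebro-geometric monad IV. For this I would invoke the dictionary already set up for the $m=0$ caloron Nahm complex on the circle: the bundles $N_0,N_1$ are recovered as $H^0(\bP^1,Q_0),H^0(\bP^1,Q_1)$ with the trivial connection, the covariant-constant endomorphisms $\beta_i$ are the matrices $Z_{0,0},Z_{1,0}$, and the rank-one jumps $\beta_0-\pi_\pm\beta_1 i_\pm=I_\pm\cdot J_\pm$ at the $\lambda$-points encode exactly the rank-one data $C_1,D_1$, so that $B_1=B_0-C_1D_1$. The genuinely new ingredient relative to the caloron is the bifundamental data: whereas for the caloron the shift $\Xi$ acts by an isomorphism and closes the Nahm flow into a loop, here it factors as multiplication by $\xi$ and $\psi$ through the diagram \eqref{shift3}, and the edge maps realize $\beta(c)=B_{h,t}B_{t,h}$, $\beta(c+\ell)=B_{t,h}B_{h,t}$. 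This is precisely the modification that turns the caloron sequence into \eqref{sheaf-diagram}, so the bifundamental maps of the bow solution map onto the shift maps $\widehat B_{h,t},\widehat B_{t,h}$ of data II.

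Finally, as in the $m>0$ case, the dimension counts of $\widetilde V_\pm$ and $V_\pm$ need not agree on the nose; one reconciles them through the nested sheaves of sections and the adjustment maps \eqref{ajustment}, giving a morphism of monads that is an isomorphism over $\bP^1\times\bC$ and hence identifies the two bundles $E$ and $\widetilde E$. The main obstacle I anticipate is not analytic, since the $m=0$ regularity trivializes the function-space issues, but combinatorial: verifying that the edge gluing in \eqref{infmonad} reduces to exactly the fused central piece of \eqref{resolution-PQ-TN-9}, with the correct signs and twists, and that the rank-one decompositions $(I_\pm,J_\pm)$ of the bow complex are carried to the $(C,D)$ data so that the $m=0$ form of the monad relations \eqref{monad-Taub} holds. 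Once these identifications are made explicit, the chain I--V closes and the bundles encoded by the two monads coincide by the diagram-chasing and spectral-sequence argument of Section~\ref{mpos}.
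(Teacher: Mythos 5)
Your overall route is the same as the paper's: rerun the $m>0$ Taub--NUT chain I--IV of Section~\ref{mpos} with the $m=0$ bookkeeping, and obtain item V by combining the $m=0$ caloron mechanism for the rank-one jumps at $\lambda_\pm$ with the edge gluing of \eqref{infmonad}; your observations that the fused diagram \eqref{resolution-PQ-TN-9} and Proposition~\ref{genericity-maps} carry over, and that the absence of poles in $\alpha$ trivializes the Donaldson-type function-space argument, are all correct and are exactly what the paper asserts. The problem is in your concrete description of the matrix data (item III) and in the matching with the bow complex, where you have conflated the caloron and Taub--NUT structures in a way that produces an inconsistent set of constraints.

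Specifically, you impose simultaneously the caloron relations of \eqref{matrices-caloron-2} --- the commutator equation $[A,B_0]+CD=0$ and the rank-one relation $B_1=B_0-C_1D_1$ --- and the factorizations $B_0=B_{h,t}B_{t,h}$, $B_1=B_{t,h}B_{h,t}$, all on the \emph{same} pair $(B_0,B_1)$. These refer to two different pairs of sheaves. On the caloron the shift $\Xi$ is an isomorphism, so there are only two torsion sheaves, $Q_0$ on $S_0$ and $Q_1$ on $S_1$, adjacent across a single $\lambda$-point; hence one rank-one jump plus a holonomy relation. On the Taub--NUT the failure of $\Xi$ to be an isomorphism (diagram \eqref{shift3}) is precisely the point: even for $m=0$ there are \emph{three} distinct torsion sheaves, $Q_{0,0}^{1,0}$ and $Q_{1,0}^{0,0}$ (both of length $k$, supported on $S_0$, corresponding to the two pieces of the small side of the bow) and $Q_{0,1}^{0,1}$ (length $k$, on $S_1$, the large interval). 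The factorizations $B_0=B_{h,t}B_{t,h}$, $B_1=B_{t,h}B_{h,t}$ relate the matrices of the first two (adjacency across the edge), while the two rank-one jumps, one at each $\lambda$-point, relate each of them to the matrix of $Q_{0,1}^{0,1}$ --- a matrix your data omits entirely. Consequently $B_0-B_1$ has rank $\leq 2$ generically (two independent rank-one jumps coming from $(I_-,J_-)$ and $(I_+,J_+)$), so your single condition $B_1=B_0-C_1D_1$ is an extra constraint that generic bow solutions and bundles violate; and the monad relation is the intertwining relation $AB_0-B_1A+CD=0$ of \eqref{monad-Taub} (with the primed blocks absent for $m=0$), not the commutator $[A,B_0]+CD=0$, which would require the identification $B_0=B_1$ that only the caloron shift provides. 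Your closing appeal to ``the $m=0$ form of \eqref{monad-Taub}'' is the correct relation, but it contradicts the constraint list you gave earlier. The fix is to keep all three $Q$'s and their resolution matrices exactly as in the $m>0$ diagram \eqref{resolution-PQ-TN-9}, specialize to $m=0$ (so the sheaf sequences \eqref{sheaf-sequences-2.2} have all $\pO$'s untwisted and all $Q$'s of length $k$), and let the two fundamental pairs $(I_\pm,J_\pm)$ of the bow complex supply the two separate rank-one jumps between $Q_{0,1}^{0,1}$ and each of $Q_{0,0}^{1,0}$, $Q_{1,0}^{0,0}$; with that correction the rest of your argument closes as intended.
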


  Let us  be a bit more specific about our data:  it is  fortunately  
very similar to the  $m>0$ case. 
\begin{itemize}
\item The bundle is exactly as stated for the $m>0$ case, setting $m=0$.
\item The   sheaves $P^{i,j}_{k,l}, Q^{i,j}_{k,l}$ on $\bP^1$ are exactly as stated for the $m>0$ case.
\item There is a resolution diagram for the $P,Q$ exactly as in (\ref{resolution-PQ-TN-9}).  The  matrices are those giving the maps in the resolutions,  as for $m>0$.  Their normalisations (choosing bases) will differ; again, there are constraints on the matrices imposed by (anti-)commutation of the diagram, and genericity conditions.
\item The monad, is obtained from the resolution (\ref{resolution-PQ-TN-9}) exactly as above.
\item The bow complex is obtained from the  sheaves $P,Q$ as for $m>0$. The difference for $m=0$ is at the jump points $\lambda_\pm$, where there are the rank one jumps. This already occurs with the caloron, and the mechanism which accomplishes it is the same. See section 3.
\end{itemize}

 \subsection{The case $m<0$}

Again, one has  a chain of equivalences:

\begin{theorem} One has equivalent sets of  data:
\begin{enumerate}
\item Holomorphic bundles $E$ on $X$;
\item Sheaves $P^{i,j}_{k,l}, Q^{i,j}_{k,l}$ on $\bP^1$;
\item  A tuple of matrices;
\item  A monad $ V_1 {\buildrel{\alpha }\over{\rightarrow} }V_2 {\buildrel{\beta}\over{\rightarrow}}  V_3$ of standard vector bundles on $X$, whose cohomology $\ker( \beta)/\mathrm{Im}(\alpha)$ is the bundle $E.$
\item A bow complex for $m<0$ (see section 2).
\end{enumerate}

\end{theorem}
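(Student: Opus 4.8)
The plan is to reduce the $m<0$ case to the $m>0$ case treated in Section~\ref{mpos}, exploiting the $m\mapsto -m$ symmetry implemented by the half-shift (Hecke) transform used for calorons to pass between the two signs. Concretely, I would fix the involution $\sigma$ of $X$ exchanging the coordinates $\xi\leftrightarrow\psi$ — and hence $C_0\leftrightarrow C_\infty$, $D_\xi\leftrightarrow D_\psi$, $F_\xi\leftrightarrow F_\psi$, together with the edge maps $B_{h,t}\leftrightarrow B_{t,h}$ — and compose it with the half-shift, a simultaneous Hecke modification of $E$ along $C_0$ and $C_\infty$. The composite carries data of charges $(k,m)$ with $m<0$ to data of charges $(k+m,-m)$ with $-m>0$; this bookkeeping is forced, not computed, since $\sigma$ merely relabels the two intrinsic ranks $\{k,\,k+m\}$: the ``small'' side of the $m<0$ data is $N_1$ of rank $k+m$, which becomes the rank-$\kappa$ side of the positive data with $\kappa=k+m$, and the new magnetic charge is $k-\kappa=-m$.

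I would then run the reduction through each of the five descriptions, checking that the operation is an equivalence compatible with all structure maps. On Data~I, $\sigma$ swaps the two flags and the Hecke correction restores the normalisation: the subbundle $\pO(-m)\hookrightarrow E|_{C_0}$ (which for $m<0$ has positive degree) is traded for the trivial flag, while the framing along $C_\infty\cup F_\xi\cup F_\psi$ is transported by $\sigma$ and adjusted by the twist, so that in the transformed data the flag along $C_0$ is $\pO(m)$ (negative degree, as required). On Data~II, the involution relabels the infinite flags \eqref{infiniteflags} and interchanges the sheaves $P,Q$ and the shift maps $\widehat B_{h,t}\leftrightarrow\widehat B_{t,h}$, converting the sequences \eqref{sheaf-sequences-2.2} with $m<0$ into those with $-m>0$; irreducibility is preserved since $\sigma$ is an isomorphism. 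On Data~III and IV, the involution is realised as the evident up--down symmetry of the resolution/monad diagram \eqref{resolution-PQ-TN-9} combined with $\xi\leftrightarrow\psi$, identifying the $m<0$ matrices and monad with the $m>0$ ones for the transformed charges; the constraints \eqref{monad-Taub} and the statements of Proposition~\ref{genericity-maps} pass to their counterparts. Finally, on Data~V, the reversal of the roles of the two intervals is precisely the definition adopted for the $m<0$ bow complex in Section~\ref{Bow-complexes}, so the passage between monad and bow complex is the $m>0$ construction read on the interval whose rank is now the larger one. Invertibility of $\sigma$ and of the Hecke transform then yields the full chain for $m<0$ from the established $m>0$ equivalences.

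The main obstacle I expect is the fundamental (boundary) data at the $\lambda$-points. Under the involution the ``large'' and ``small'' intervals switch, so I must verify that the singular normal form — the simple pole of $\alpha_1,\beta_1$ with residues forming an irreducible $su(2)$ representation of dimension $|m|$, the chosen trivialisation $v_\pm$ of the highest-weight space, and the matching $U(0)=\alpha^0(0)$, $P(0)=\beta^0(0)$ on the continuing components — is transported correctly and reproduces the $-m>0$ normal form on the reversed interval. Equally delicate is the asymmetric framing: because only $C_\infty$ (not $C_0$) carries a framing in Data~I, the geometric swap $\sigma$ alone is not an equivalence of normalised data, and one must show that the Hecke correction restores the framing consistently at every level, in particular that the twist disturbs neither the genericity conditions \eqref{gencon1}--\eqref{gencon4} nor the triviality of $E$ on the generic line $L_\eta$. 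Establishing this joint compatibility of the framing and the singular residues through the transform, rather than any single diagram chase, is where the real work lies.
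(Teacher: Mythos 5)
Your route is genuinely different from the paper's: the paper proves the $m<0$ case directly, observing that the $m>0$ machinery of Section \ref{mpos} (the sheaves $P,Q$, the resolution diagram \eqref{resolution-PQ-TN-9}, the monad, and the passage to the bow complex) runs essentially verbatim with the roles of the two intervals reversed and with different matrix normalisations, the only new feature being that $C_0$ is forced to be a jumping line, whence $c_2>-m$. Your proposal instead reduces $m<0$ to $m>0$ by an involution composed with the caloron half-shift, and this is where it breaks: the half-shift is not available on $X$. For the caloron it exists because $C_0$ and $C_\infty$ are linearly equivalent on $\bP^1\times\bP^1$, so the simultaneous elementary modification along them changes $c_1$ by $-2[C_0]$ and a twist by $\pO(C_0)$ restores the normalisation $c_1=0$ required of Data I. On $X$ this fails for a topological reason: from the hexagon of $(-1)$-curves, $D_\xi$ meets $C_0$ once and does not meet $C_\infty$, so $([C_0]+[C_\infty])\cdot D_\xi=1$ is odd, and hence $[C_0]+[C_\infty]$ is not twice any divisor class; after your Hecke modification no line-bundle twist can return the bundle to $c_1=0$, so the transformed object is not an instance of Data I at all (and precomposing with the involution does not help, since $\sigma$ preserves $[C_0]+[C_\infty]$). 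This is precisely the ``big difference'' the paper stresses --- on $X$ multiplication by $\xi$ twists by $-D_\xi+F_\xi$ and the shift $\Xi$ is no longer an isomorphism --- and it is why the Taub-NUT theory needs the bifundamental maps $\widehat B_{h,t},\widehat B_{t,h}$ and the fused monad instead of the caloron's clean shift; only the full shift (modification along the entire zero and pole divisors of $\xi$) survives on $X$, and that preserves $m$ rather than flipping its sign.

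There is a second, independent mismatch on the bow side, where you assert Data V matches ``by definition''. For $m<0$ the bow complex has both the $su(2)$ poles of dimension $-m$ and the bifundamental data attached to the rank-$k$ bundle $N_0$: the edge sits at $s=\pm\ell/2$, at the ends of the outer intervals. For the transformed charges $(k+m,-m)$, the $m'>0$ complex has its poles on the inner, rank-$k$ bundle but its edge on the outer, rank-$(k+m)$ bundle. Re-reading one structure as the other by exchanging ``inner'' and ``outer'' arcs therefore puts the edge on an interval that carries no edge: the exchange of the two arcs is not a symmetry of the bow, exactly because only one of them carries the edge (and their lengths $2\lambda$ and $\ell-2\lambda$ differ). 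This asymmetry of the bow is the combinatorial mirror of the Picard obstruction above; for the caloron, whose Nahm data lives on a circle with no edge, both symmetries are present, which is why the $m\mapsto-m$ trick works there and only there. The honest proof for $m<0$ is the paper's direct one: rerun the construction, which is sufficiently rank-symmetric to go through.
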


  Again, let us be more specific. Much of the data  is the same as for the $m>0$ case. 
\begin{itemize}
\item The bundle is exactly as stated for the $m>0$ case, setting $m<0$. We note one difference with the $m>0$ case, in that 
the line $C_0$ is automatically a jumping line for the holomorphic structure, that is a line where the holomorphic structure is non-trivial. These lines can be counted, with multiplicity, and the multiplicity here, of the line $C_0=\{ \psi = \infty\}$ is bounded below by $-m$. One has a ruling by lines $\psi = $ constant, 
  and  the number of jumping lines in the  ruling counted with multiplicity equals the second Chern class. As $C_0$ is already  contributing at least $-m$ to the count of jumping lines, one has  $c_2> -m$.
\item The   sheaves $P^{i,j}_{k,l}, Q^{i,j}_{k,l}$ on $\bP^1$ are exactly as stated for the $m>0$ case.
\item There is a resolution diagram for the $P,Q$ exactly as in (\ref{resolution-PQ-TN-9}).  The  matrices are those giving the maps in the resolutions,  as for $m>0$.  Their normalisations (choosing bases) will differ; again, there are constraints on the matrices imposed by (anti-)commutation of the diagram, and genericity conditions.
\item The monad, is obtained from the resolution (\ref{resolution-PQ-TN-9}) exactly as above.
\item The bow complex is obtained from the  sheaves $P,Q$ as for $m>0$. Note that the rank of the bundles on the intervals is reversed, but the procedure for building them is the same.  
\end{itemize}

  \section{On the Twistor Space}
What we have said so far concerns what is happening on a single fibre of the twistor fibration $Z_0\rightarrow \bP^1$, i.e. in one complex structure on the Taub-NUT.  We would like to see how our various correspondences generalize, when considered over the full twistor space.
  
 \subsection{Extending the bundle to a partial compactification, and infinite flags over $\bO(2)$}
Recall from Section~\ref{Sec:ITN}   our definitions of the twistor space.  We had a diagram, with horizontal maps the natural inclusion, 
   $$ \xymatrix{  X_0\ar[r]\ar[d]&Z_0\ar[d]\\ \bC\ar[r] \ar[d]& \bO(2)= T\bP^1\ar[d]\\ pt\ar[r] & \bP^1,}$$  
 whose fibre from the top row to the middle one is generically a $\bC^*$, with exceptional fibres over the zero-section in $\bO(2)$ being a chain of two copies of $\bC$;  these fibres   in $\bR^4\rightarrow \bR^3$ are  the preimages of a family of parallel lines in $\bR^3$.  There is a fibrewise compactification 
$$ \xymatrix{  X_0'\ar[r]\ar[d]&Z_0'\ar[d]\\ \bC \ar[r] \ar[d]& \bO(2)\ar[d]\\ pt\ar[r] & \bP^1,}$$ 
whose fibre from the top row to the middle one is generically a $\bP^1$, with exceptional fibres a chain of 2 copies of $\bP^1$.
 
   In going to the (partial) compactification $Z_0'$ from $Z_0$, one is adding two copies of $\bO(2)$: one, $\Gamma_0$  over  $\psi^{-1} = 0$ (so that $\Gamma_0\cap X_0'$ is the restriction to $X_0'$ of the divisor $C_0$ in $X$  of the previous section  ), and another, $\Gamma_\infty$, at $\xi^{-1}  = 0$, with again $\Gamma_\infty\cap X_0' = C_\infty\cap X_0'$ . The compactification has a real structure $\sigma$ extending the one on $Z_0$, and lifting the standard ones on $\bO(2)$ and $\bP^1$:
   $$\sigma(\xi,\psi,\eta,\zeta) = (-\overline \psi/\overline \zeta,\overline \xi/\overline \zeta,-\overline \eta/\overline\zeta^2,-1/\overline\zeta),$$
 and interchanging $\Gamma_0, \Gamma_\infty$, as well as $\Delta_\psi, \Delta_\xi$.
 
We gave ourselves a full compactification $X$ in the previous section, but this approach does not work well on the full twistor space, as our objects are resolutely non-algebraic on the full twistor space. Still, the extensions  to $X_0'$ go over well here: recall that our  instanton gave us a rank 2 holomorphic vector bundle $E$ on the twistor space $X_0'$, with distinguished sub-line bundles along $C_0\cap X_0' = \bC$, and also along $C_\infty\cap X_0' = \bC$. We now have a similar picture for $Z_0$: a rank 2 holomorphic bundle $\pE$, (in general, we will try to use the same letters, but in script, to denote extensions of our sheaves from $X_0, X_0'$ to the full twistor space $Z_0, Z_0'$)  with distinguished sub-line bundles 
$\pL_0$, $\pL_\infty$ along $\Gamma_0$, $\Gamma_\infty$; one has, as in Hitchin \cite{Hitchin:1983ay}, Garland-Murray \cite {Garland-Murray}, Charbonneau-Hurtubise \cite{Charbonneau:2007zd}, that 
$$\pL_0 = L^{\lambda_+}(-m),\quad  \pL_\infty = L^{ \lambda_-}(-m)$$
where $L$ is the standard line bundle over $\bO(2)$ of section~\ref{Sec:TwistorSpace}, and the twist $(-m)$ refers as usual to twisting by the standard line bundles $\mathcal{O}(-m)$ lifted from $\bP^1$ so that along $\Gamma_0$, $\pE$ is an extension
$$0\rightarrow L^{\lambda_+}(-m)\rightarrow \pE|_{\Gamma_0} \rightarrow L^{ \lambda_-}(m)\rightarrow 0,$$
and along $\Gamma_\infty$,
$$0\rightarrow L^{\lambda_-}(-m)\rightarrow \pE|_{\Gamma_\infty} \rightarrow L^{ \lambda_+}(m)\rightarrow 0.$$
Furthermore, the bundle $\pE$ comes equipped with a quaternionic structure, lifting the real structures $\sigma$ on $Z_0$ and $\bO(2)$: an antiholomorphic bundle map $\tau$, whose square is minus the identity: 
$$ \xymatrix{  \pE\ar[r]^\tau\ar[d]&\pE\ar[d]\\ Z_0 \ar[r]^\sigma&Z_0.}$$
Recall that the (partially compactified) twistor space $Z_0'$ is a bundle of quadrics in the bundle $\bP(L^\ell\oplus L^{-\ell}\oplus \pO)$ over $\bO(2)$; the tautological sections $\xi, \psi$ give identifications
\begin{align} 
L^\ell& \simeq \pO(\Gamma_0 + \Delta_\xi - \Gamma_\infty),&  
L^{-\ell}& \simeq \pO(\Gamma_\infty + \Delta_\psi - \Gamma_0).
\end{align}
Here $\Delta_\xi,\Delta_\psi$ are the two components of $\eta = 0.$

Our subline bundles $\pL_0$, $\pL_\infty$ can be thought of as defining flags 
$0=\pE^0_0\subset \pE^0_1= \pL_0 \subset \pE^0_2 =\pE$ over $\Gamma_0$ and $0=\pE_\infty^0\subset \pE_\infty^1= \pL_\infty \subset \pE_\infty^2 =\pE$ over  $\Gamma_\infty$; here the index $i$ in $\pE^*_i$ denotes the rank.   Define sheaves of meromorphic sections of $\pE$, which are holomorphic sections away from $\Gamma_0$, $\Gamma_\infty$:
\begin{equation} 
\pE_{p,q}^{m,n}=\left\{s\Big|
\begin{array}{c}\xi^{ p}s\ {\rm finite\ at}\  \Gamma_0\ {\rm with\ values\ in}\ \pE^0_q,\\ \xi^{-m} s\ {\rm finite\ at}\ \Gamma_\infty\ {\rm with\ values\ in}\ \pE_\infty^n
\end{array}
\right\}.
\end{equation}

We can then consider, as for the caloron, the infinite rank direct image $\pF$ of $\pE$ under projection  from the generically $\bC^*$-bundle $Z_0 $ (not  $Z_0'$) to $\bO(2)$.  $\pF$ is thus of infinite rank. One   can use the
flags $\pE^0_q $ along $\Gamma_0$, $\pE_\infty^n$ along $\Gamma_\infty$
to define for $p\in \bZ$ and $q= 0,1$ subbundles $\pF^0_{p,q},
\pF_\infty^{m,n}$ of $\pF$ as
\begin{align*}
\pF^0_{ p,q} &=\{s\in \pF\mid \xi^{p}s \text{ finite at }{\Gamma}_0
                         \text{ with value in } \pE^0_q  \}, \\
\pF_\infty^{m,n} &=\{s\in \pF\mid \xi^{-m}s \text{ finite at } {\Gamma}_\infty
                           \text{ with value in } \pE_\infty^n \}.
\end{align*}
We now have, as before, infinite flags
\begin{equation}\label{infiniteflags-1}\begin{gathered}
\cdots\subset \pF^0_{-1,0}\subset \pF^0_{-1,1} \subset \pF^0_{0,0} \subset \pF^0_{0,1} \subset \pF^0_{1,0} \subset \pF^0_{1,1} \subset\cdots\phantom{-.}\\
\cdots\supset \pF_\infty^{2,0}\supset \pF_\infty^{1,1} \supset \pF_\infty^{1,0} \supset \pF_\infty^{0,1} \supset \pF_\infty^{0,0} \supset \pF_\infty^{-1,1} \supset\cdots.
\end{gathered}\end{equation}
For  quotients, one finds line bundles
\begin{align} \pF^0_{ p,1}/ \pF^0_{ p,0} =  L^{p\ell +\lambda_+}(-m),&\quad \pF^0_{ p+1,0}/ \pF^0_{ p,1} =  L^{p\ell +\lambda_-}(m),\\
 \pF_\infty^{p,1}/\pF_\infty^{p,0}= L^{-p\ell+\lambda_-}(-m),&\quad \pF_\infty^{p+1,0}/\pF_\infty^{p,1}= L^{-p\ell+\lambda_+}(m).\end{align}

As for the caloron, the direct images $R^1\pi_*(\pE_{p,q}^{m,n})$ can be computed as the quotients $\pF/(\pF^0_{p,q}+\pF_\infty^{m,n})$. The direct images  
\begin{align}R^1\pi_*(\pE_{p,0}^{-p+1,0})&=\pF/(\pF^0_{p,0}+\pF_\infty^{-p+1,0}){\buildrel{def}\over {=}}\pQ_{p,0}^{-p+1,0} , \\R^1\pi_*(\pE_{p,1}^{-p,1}) &= \pF/(\pF^0_{p,1}+\pF_\infty^{-p,1})){\buildrel{def}\over {=}}\pQ_{p,1}^{-p,1}
\end{align}
are supported respectively over two {\it spectral curves}  $S_0$, $S_1$ in $\bO(2)$, and are, generically, line bundles over these curves. The quotients  
\begin{align}R^1\pi_*(\pE_{p,0}^{-p ,1})&=\pF/(\pF^0_{p,0}+\pF_\infty^{-p,1}){\buildrel{def}\over {=}}\pP_{p,0}^{-p,1} , \\R^1\pi_*(\pE_{p,1}^{-p,0}) &= \pF/(\pF^0_{p,1}+\pF_\infty^{-p,0})){\buildrel{def}\over {=}}\pP_{p,1}^{-p,0}
\end{align}
on the other hand, are supported over the full space and have rank one. A calculation with the Grothendieck-Riemann-Roch theorem tells us that $S_0$, the curve of jumping lines in the fibration $Z_0'\rightarrow \bO(2)$, lies in the linear system of $\pO(2k)$ over $\bO(2)$, while $S_1$ lies in the linear system $\pO(2k+ 2m)$ (see Charbonneau-Hurtubise \cite{Charbonneau:2006gu}). When $\pE$ is  twisted by $L^s$, for $s$ varying in an interval, these sheaves $\pQ_{p,0}^{ -p+1,0},\pQ_{p,0}^{ -p,0}$ are the sources of the flows for Nahm's equations, by the well known correspondence of solutions to Lax pair type equations with flows of line bundles on a curve.

More generally, as for the caloron, one can reconstruct $\pF$ by 
\begin{equation}\label{description-of-F-2}
 \begin{matrix}
0&\rightarrow&\pF&\rightarrow&
\begin{matrix}\vdots\\ \pP_{p-1,1}^{-p+1,0}   \\ \oplus\\
\pP_{p,0}^{-p,1} \\ \oplus\\ \pP_{p,1}^{-p,0}  \\ \vdots
\end{matrix}
&\rightarrow &
\begin{matrix}
\vdots\\ \oplus\\ \pQ_{p,0}^{-p+1,0} \\
\oplus\\ \pQ_{p,1}^{-p,1} \\ \oplus\\ \vdots
\end{matrix}
&\rightarrow &0.
\end{matrix}
\end{equation}
  
 \subsection{Generic structure of the bundles}
  
One can understand some of the geometry of the flags by choosing a local trivialization of $\pF$ in which the flag $\pF^0_{p,q}$ is the standard  flag for the loop group $LGl(2)$; the other flag $\pF_\infty^{m,n}$ then  defines a map into the flag manifold, and the spectral curves are pullbacks of the two  codimension one varieties of the Birkhoff stratification, and the flag $\pF_\infty^{m,n}$ is the pull-back of the tautological flag. The two codimension one Birkhoff strata intersect in two codimension two strata, and accordingly, if one imposes the genericity condition:
\begin{equation} \label{genericity} 
    {\text{The curves}}\, S_0\, {\text{and}}\, S_1\,  {\text{only meet these codimension  one and two strata;}} 
    \end{equation} 
(in other words the image in the flag manifold of $\bO(2)$ is in general position).  then one can write (see \cite{murray-monopoles, Garland-Murray}) 
$$ S_0\cap S_1 = S_{0,1}\cup S_{1,0},$$
where $S_{0,1}$, $S_{1,0}$ are pullbacks of the codimension two strata, with $S_{0,1}$  characterized by 
$\mathrm{dim}\, \pF/(\pF^0_{p,1} + \pF_\infty^{-p+1,0}) =1$,
and $S_{1,0}$ by 
$\mathrm{dim}\, \pF/(\pF^0_{p,0} + \pF_\infty^{-p,1}) = 2.$ 
The two sets are interchanged by the real structure. From the maps 
\begin{align*}   \pF^0_{ p,1}/ \pF^0_{ p,0} =  L^{p\ell + \lambda_-}(-m)& \rightarrow  \pF/(\pF^0_{p,0}+\pF_\infty^{-p ,1})=\pP_{p,0}^{-p ,1},\\
 \pF_\infty^{-p+1,0}/\pF_\infty^{-p,1}= L^{p\ell + \lambda_-}(m) & \rightarrow  \pF/(\pF^0_{p,0}+\pF_\infty^{-p ,1})= \pP_{p,0}^{-p ,1},\\
  \pF^0_{ p+1,0}/ \pF^0_{ p,1} =  L^{p\ell +\lambda_+}(m)&\rightarrow \pF/(\pF^0_{p ,1}+\pF_\infty^{-p ,0})=\pP_{p ,1}^{-p ,0},\\ 
 \pF_\infty^{-p,1}/\pF_\infty^{-p,0}= L^{ p\ell+\lambda_+}(-m)&\rightarrow \pF/(\pF^0_{p ,1}+\pF_\infty^{-p ,0})=\pP_{p ,1}^{-p ,0},
  \end{align*}
one can obtain, as   Garland and Murray do for the caloron case \cite[Section 6]{Garland-Murray}, that, with our genericity assumption,
 \begin{align}\pP_{p,0}^{-p ,1} &= L^{p\ell + \lambda_-}(2k+m)\otimes \pI_{S_{1,0}},\\
\pP_{p ,1}^{-p ,0}&=  L^{p\ell +\lambda_+}(2k+m)\otimes \pI_{S_{0,1}},
 \end{align}
where $ \pI_{S_{0,1}},  \pI_{S_{1,0}}$ are the ideal sheaves of $S_{0,1}, S_{1,0}$. In a similar vein, looking at the maps
\begin{align*} 
\pF^0_{ p,1}/ \pF^0_{ p,0} =  L^{p\ell + \lambda_-}(-m)& \rightarrow  \pF/(\pF^0_{p,0}+\pF_\infty^{-p+1 ,0})=\pQ_{p,0}^{-p+1 ,0}, \\
 \pF_\infty^{-p+1,1}/\pF_\infty^{-p+1,0}= L^{(p-1)\ell + \lambda_+}(-m) & \rightarrow  \pF/(\pF^0_{p,0}+\pF_\infty^{-p+1 ,0})=\pQ_{p,0}^{-p+1 ,0}, \\
  \pF^0_{ p+1,0}/ \pF^0_{ p,1} =  L^{ p\ell +\lambda_+}(m)&\rightarrow \pF/(\pF^0_{p ,1}+\pF_\infty^{-p ,1})=\pQ_{p ,1}^{-p ,1}, \\
 \pF_\infty^{-p+1,0}/\pF_\infty^{-p,1}= L^{ p\ell+\lambda_-}( m)&\rightarrow\pF/(\pF^0_{p ,1}+\pF_\infty^{-p ,1}) =\pQ_{p ,1}^{-p ,1},  
\end{align*}
one obtains, again as in \cite{Garland-Murray}, the sheaves concentrated over the spectral curves
 \begin{align}\label{Qs} \pQ_{p,0}^{-p+1 ,0}  &= L^{p\ell + \lambda_-}(2k+m)|_{S_0}[-S_{1,0}] =  L^{ (p-1)\ell +\lambda_+}(2k+m)|_{S_0}[-S_{0,1}],\\
\pQ_{p ,1}^{-p ,1}&=   L^{ p\ell + \lambda_+}(2k+m)|_{S_1}[-S_{0,1}] 
 =  L^{p\ell +\lambda_-}(2k+m)|_{S_1}[-S_{1,0}].
 \end{align}
 In particular, this gives isomorphisms over the spectral curves: 
 
 \begin{proposition}
 One has over $S_0$
 $$L^{ \lambda_+}  [-S_{1,0}] =  L^{ -\ell +\lambda_-} [-S_{0,1}],$$ 
 and over $S_1$
 $$L^{  \lambda_+}  [-S_{1,0}]  =  L^{ \lambda_-} [-S_{0,1}].$$
 \end{proposition}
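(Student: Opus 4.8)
The plan is to obtain both isomorphisms as formal consequences of the two presentations of each spectral-curve sheaf recorded in \eqref{Qs}; essentially all of the substantive work (the Birkhoff-stratification analysis in the style of Garland--Murray, together with the genericity hypothesis \eqref{genericity} under which $S_0\cap S_1$ splits cleanly as $S_{0,1}\cup S_{1,0}$) has already been invested in producing those presentations, so what remains is a cancellation. Concretely, \eqref{Qs} asserts that the single line bundle $\pQ_{p,0}^{-p+1,0}$ on $S_0$ has the two descriptions
\[
L^{p\ell+\lambda_-}(2k+m)|_{S_0}[-S_{1,0}]\quad\text{and}\quad L^{(p-1)\ell+\lambda_+}(2k+m)|_{S_0}[-S_{0,1}],
\]
while $\pQ_{p,1}^{-p,1}$ on $S_1$ has the two descriptions $L^{p\ell+\lambda_+}(2k+m)|_{S_1}[-S_{0,1}]$ and $L^{p\ell+\lambda_-}(2k+m)|_{S_1}[-S_{1,0}]$.

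First I would treat $S_0$. Since the two expressions above describe one and the same sheaf, they are isomorphic; I would then tensor the isomorphism with the inverse of the common invertible factor $L^{(p-1)\ell}(2k+m)|_{S_0}$. Tensoring by a line bundle is invertible in $\mathrm{Pic}(S_0)$, so this is legitimate, and after cancellation all dependence on $p$ and on the twist $(2k+m)$ drops out, leaving an identity purely among the bundles $L^{\lambda_\pm}$ and the divisors $S_{0,1}, S_{1,0}$ over $S_0$. The vanishing of the $p$-dependence is itself meaningful: it records compatibility with the shift operator $\Xi$, i.e.\ the relation is intrinsic to the pair of spectral curves rather than to a particular step of the flag. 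The case of $S_1$ is structurally identical: equate the two presentations of $\pQ_{p,1}^{-p,1}$ and cancel the common factor $L^{p\ell}(2k+m)|_{S_1}$ to obtain the second identity.

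The main point to watch is bookkeeping rather than any deep obstacle. One must track the powers of $L$ carefully (note the asymmetric $(p-1)\ell$ versus $p\ell$ in the two $S_0$ presentations, which is what produces the extra $L^{-\ell}$ relating the two sides over $S_0$) and must keep the two divisors $S_{0,1}, S_{1,0}$ straight; in particular, to match the \emph{symmetric} form in which the proposition is displayed one uses that the quaternionic/real structure $\tau$ lifting $\sigma$ preserves each $S_i$ while interchanging $S_{0,1}$ and $S_{1,0}$, so the cancellation identity and its real conjugate are the two displayed relations. The only genuinely substantive input is that the equalities in \eqref{Qs} are honest isomorphisms of line bundles on the (possibly singular) spectral curves, which is exactly what the genericity assumption \eqref{genericity} guarantees; granting that, the proposition is immediate.
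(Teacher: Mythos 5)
Your central move --- equating the two presentations of $\pQ_{p,0}^{-p+1,0}$ and of $\pQ_{p,1}^{-p,1}$ recorded in \eqref{Qs} and cancelling the common invertible factor --- is exactly the paper's proof: the Proposition is offered there as an immediate consequence of \eqref{Qs} (``In particular, this gives isomorphisms over the spectral curves''), with no further argument. Your observations that the $p$-dependence must drop out and that the asymmetry $(p-1)\ell$ versus $p\ell$ is what produces the $L^{-\ell}$ over $S_0$ are also correct.

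The last step of your proposal, however, does not work, and it is worth being precise about why. What the cancellation literally yields is
$$L^{\lambda_-}[-S_{1,0}]\cong L^{-\ell+\lambda_+}[-S_{0,1}]\ \text{over}\ S_0,\qquad L^{\lambda_-}[-S_{1,0}]\cong L^{\lambda_+}[-S_{0,1}]\ \text{over}\ S_1,$$
i.e.\ with $\lambda_-$ attached to $S_{1,0}$, whereas the Proposition as displayed attaches $\lambda_+$ to $S_{1,0}$. Your appeal to the real structure cannot repair this mismatch: the conjugate pull-back under $\sigma$ sends $L^{\mu}$ to $L^{-\mu}$ (which is precisely why the quaternionic structure in the spectral-data theorem later in the paper \emph{interchanges} $\pQ_{1,0}^{0,0}$ and $\pQ_{0,0}^{1,0}$ rather than fixing them), it preserves each $S_i$, and it interchanges $S_{0,1}\leftrightarrow S_{1,0}$; since $\lambda_-=-\lambda_+$, each of the two cancellation identities is therefore its own real conjugate, so conjugation reproduces the same pairing rather than the swapped one. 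Note also that your claim that ``the cancellation identity and its real conjugate are the two displayed relations'' contradicts your own (correct) statement that $\tau$ preserves each $S_i$: the two displayed relations live over different curves, so neither can be the conjugate of the other, and each requires its own, separate cancellation. The residual discrepancy you were trying to explain is in fact an internal inconsistency of the paper: the pairing of $\lambda_\pm$ with $S_{1,0},S_{0,1}$ in the Proposition is opposite to that in \eqref{Qs} and in the analogous caloron formulas, so one of the two is a typo. Modulo that relabeling, your argument is the paper's argument; the real-structure step should simply be deleted.
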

 
 These isomorphisms are useful in understanding the Nahm flows.

\subsection{Reconstructing $\pE$}

So far a lot of this is basically reproducing results for the caloron; what is new here is a twist in the geometry induced by the fact that the fibers of  $Z_0'\rightarrow\bO(2)$ over $\eta = 0$ are chains of two $\bP^1$s, while elsewhere they are a single $\bP^1$; alternately, by the fact that the function $\eta$ factors as $\xi\psi$, and so there are two components $\Delta_\xi, \Delta_\psi$ to $\eta= 0$. The section $\xi$ of $L^\ell(1)$ gives an isomorphism
$$\xi:  \pE_{ 1,0}^{0,0}\rightarrow  \pE_{0 ,0}^{ 1,0}[-\Delta_\xi]\otimes L^{ \ell}(1).$$
Composing it with the inclusion $ \pE_{0 ,0}^{ 1,0}[-\Delta_\xi]\otimes L^{ \ell}(1)\rightarrow \pE_{0 ,0}^{ 1,0}\otimes L^{ \ell}(1)$, and 
then taking $R^1\pi_*,$ one obtains 
\begin{equation}  
\widehat \pB_{h,t}: \pQ_{1,0}^{0,0}\rightarrow  \pQ_{0 ,0}^{ 1,0}\otimes L^{ \ell}(1). \label{hatheadtail}
\end{equation}
Likewise, multiplication by $\psi$ induces 
\begin{equation}  \widehat \pB_{t,h}: \pQ^{1,0}_{0,0}\rightarrow  \pQ^{0 ,0}_{ 1,0}\otimes L^{ -\ell}(1).\label{hattailhead} \end{equation}

The failure of $\widehat \pB_{h,t}$ to be an isomorphism is governed by vanishing of the inclusion over $\Delta_\xi.$ Note that the fibres of $Z_0'\rightarrow \bO(2) $ over $\eta = 0$ intersect $\Delta_\xi$ in a line $\bP^1_\xi$, and $\Delta_\psi$ in a $\bP^1_\psi$; on $\bP^1_\xi$, the tensoring by $\pO(-\Delta_\xi)$ is tensoring by an $\pO(1)$, and on $\bP^1_\psi$,   by an $\pO(-1)$. 

The sheaves $\pQ_{ 1,0}^{0,0},\pQ_{0 ,0}^{ 1,0}$ measure when there are jumping lines (lines over which the bundle is non-trivial) in the structure of $\pE$ over $\bO(2)$. Over $\eta= 0$, $\pQ_{ 1,0}^{0,0}$ picks out the jumping lines in $\Delta_\psi$, and $\pQ_{0 ,0}^{ 1,0}$ picks out those in  $\Delta_\xi$. The two cases are interchanged by the real structure. Thus, the intersection of the spectral curve with $\eta = 0$ 
gets partitioned into two divisors of degree $k$ on the curve: one, $S_{0,\xi}$, where the jumping line lies in $\Delta_\xi$, and the other, $S_{0,\psi}$, where the jumping line lies in $\Delta_\psi$. The map $\widehat \pB_{h,t}$ is not  an isomorphism over $S_{0,\psi}$, and similarly, the map $\widehat \pB_{t,h}$ is not  an isomorphism over $S_{0,\xi}$; in fact, instead there are isomorphisms
\begin{equation}\label{shiftup}  
\widehat \pB_{h,t}: \pQ_{1,0}^{0,0}\rightarrow  \pQ_{0 ,0}^{ 1,0}\otimes L^{ \ell}(1)[-S_{0,\psi}],  
\end{equation}
 
\begin{equation} \label{shiftdown} \widehat \pB_{t,h}: \pQ^{1,0}_{0,0}\rightarrow  \pQ^{0 ,0}_{ 1,0}\otimes L^{ -\ell}(1)[-S_{0,\xi}]. \end{equation}We note that the compositions
$\widehat \pB_{h,t}\circ \widehat \pB_{t,h}$, $\widehat \pB_{t,h}\circ \widehat \pB_{h,t}$ are sections of $\pO(2)[-S_{0,\xi}-S_{0,\psi}]$ corresponding to multiplication by $\xi\psi = \eta$, vanish over $\eta= 0$ on the curve, and have as well a double pole over $\zeta=\infty$, that is
$$S_0 \cap \{\eta= 0\} =  S_{0,\psi}\cup S_{0,\xi},$$
 as it should, with $ S_{0,\psi}, S_{0,\xi}$ interchanged by the real structure.

As noted for the restriction of the bundle to $X_0'$, the fact that the shifts between $\pF^0_{p ,i}, \pF^0_{p+1,i}$ and between $\pF_\infty^{-p,1}, \pF_\infty^{-p-1,1}$ induced by multiplication by $\xi$, $\psi$ are no longer isomorphisms makes the reconstruction process for the bundle $\pE$ is a bit trickier. One has,   essentially, a parametrized version of what was done over $X_0$:
  \begin{equation} \label{cal-E-sequence}
  0\rightarrow \pE \rightarrow \pP^{0,0}_{0,1}\oplus \pQ \oplus \pP^{0,1}_{0,0} \xrightarrow{\begin{pmatrix}1& 0 &-1\\  
  -1&(\xi,\widehat \pB_{t,h})& 0\\ 0& (\widehat \pB_{h,t},\psi)& 1\end{pmatrix}} \pQ_{0,1}^{0,1}\oplus \pQ_{ 1,0}^{0,0} \oplus \pQ_{0,0}^{1,0},\end{equation}
where we kept the notation  $\pP^{i,j}_{m,n},\pQ^{i,j}_{m,n}$ to denote the lifts of $\pP^{i,j}_{m,n},\pQ^{i,j}_{m,n}$ to $Z_0'$ from $\bO(2)$, and   $\pQ$ is defined as the quotient, over $Z_0$
  
 \begin{equation} \label{define-pQ2}\xymatrix{  
  \pQ_{ 1,0}^{0,0}( -\Gamma_0)(-2)   \ar[ddr]^(0.7){ -\widehat \pB_{h,t}}\ar[r]^(0.43){\psi\bI} &  \pQ_{ 1,0}^{0,0}(-\Gamma_\infty )  \otimes L^{-\ell} (-1) \ar[rd]  \\
 &&\pQ. \\
\pQ_{0 ,0}^{ 1,0} (-\Gamma_\infty) (-2) \ar[uur]_(0.7){-\widehat \pB_{t,h}}\ar[r]^(0.43){\xi\bI} &  \pQ_{0 ,0}^{ 1,0} (-\Gamma_0 )\otimes L^{\ell}(-1)   \ar[ur] \\ }
\end{equation}
 
This way of reconstructing $\pE$ has advantages for describing solutions to the Dirac equation and the Laplace equation for the underlying instanton $A$, shifted by the $U(1)$ instanton $-sa$ corresponding to the line bundle $L^s.$ On the twistor space, this shift amounts to considering the bundle $\pE\otimes L^{-s}$. Let $R$ denote one of the sheaves $\pQ_{0,1}^{0,1},  \pQ_{ 1,0}^{0,0}, \pQ_{0,0}^{1,0}$ on the right hand side of \eqref{cal-E-sequence}. An element of $H^0(Z_0, R(-j)\otimes L^{-s})$ maps, by the coboundary map, to an element of $H^1(Z_0, \pE(-j)\otimes L^{-s})$; by the twistor transform,
\begin{itemize}
\item
for $j= 1$, these correspond to solutions of the Dirac equation for the connection $A-sa\bI$, and 
\item
for $j = 2,$ these correspond 
to solutions of the Laplace equation in the same background.
\end{itemize}
 
With this, following the proof in Hurtubise and Murray \cite[Prop 1.17]{HurtubiseMurray}, (which in turn follows Hitchin \cite{Hitchin:1983ay}) we have
\begin{theorem} \label{vanishing} For solutions to the Laplace equation
\begin{itemize}
\item For $s\in (\lambda_-, \lambda_+)$ if $m>0$, and $s\in [\lambda_-, \lambda_+]$ if $m=0$, elements of $H^0(S_1, \pQ_{0,1}^{0,1}\otimes L^{-s}(-2))$ correspond to  solutions to the Laplace equation on $X$ decaying at infinity, in the  background of the connection $A-sa\bI$;
\item For $s\in [\lambda_+, \lambda_- +\ell]$,  elements of $H^0(S_0, \pQ_{1,0}^{0,0}\otimes L^{-s}(-2))$ correspond to solutions to the Laplace equation on $X$ decaying at infinity, in the same background.
\item For $s\in (\lambda_+-\ell, \lambda_-  )$, elements of $H^0(S_0, \pQ_{0,0}^{1,0}\otimes L^{-s}(-2))$ correspond to  solutions to the Laplace equation on $X$ decaying at infinity, in the same background.
\end{itemize}

In consequence, since decaying solutions to the Laplace equation must vanish, 
\begin{align}   H^0(S_1, \pQ_{0,1}^{0,1}\otimes L^{-s}(-2))&  =0\ {\rm for}\ s\in  (\lambda_-, \lambda_+),\  {\rm and}\  s = \lambda_-, \lambda_+ {\rm if}\  m=0,  \\
H^0(S_0, \pQ_{1,0}^{0,0}\otimes L^{-s}(-2))& = 0 \ {\rm for}\ s\in [\lambda_+, \lambda_- +\ell],\\
 H^0(S_0, \pQ_{0,0}^{1,0}\otimes L^{-s}(-2))& = 0 \ {\rm for}\ s\in [\lambda_+-\ell, \lambda_-  ].
\end{align}

In turn, for the Dirac equation,
\begin{itemize}
\item For $s\in (\lambda_+, \lambda_- +\ell)$ if $m>0$, and $s\in [\lambda_+, \lambda_- +\ell]$ if $m=0$,
the elements of $H^0(S_0, \pQ_{1,0}^{0,0}\otimes L^{-s}(-1))$ correspond to $L^2$ solutions to the Dirac equation on $X$, in the background of the connection $A-sa\bI$;

\item For $s\in [\lambda_+, \lambda_- +\ell]$, 
  elements of $H^0(S_0, \pQ_{1,0}^{0,0}\otimes L^{-s}(-1))$ correspond to $L^2$ solutions to the Dirac equation on $X$, in the same background;
  
  \item For $s\in [\lambda_+-\ell, \lambda_-  ]$, 
 the elements of $H^0(S_0, \pQ_{0,0}^{1,0}\otimes L^{-s}(-1))$ correspond to $L^2$ solutions to the Dirac equation on $X$, in the same background.
 \end{itemize}

 From the sequence $\pO(-2)\rightarrow \pO(-1)\rightarrow \pO(-1)|_f$, where $f$ is a fiber of $\bO(2)$ over $\bP^1$, tensored by the bundles above, the vanishing theorem for the Laplace equation implies in turn that 
 \begin{align}   H^0(S_1, \pQ_{0,1}^{0,1}\otimes L^{-s}(-1)) & =\bC^{k+m}\ {\rm for}\ s\in  (\lambda_-, \lambda_+),\ {\rm and}\ s= \lambda_-, \lambda_+, {\rm if} m=0,\\
H^0(S_0, \pQ_{1,0}^{0,0}\otimes L^{-s}(-1)) &= \bC^k  \ {\rm for}\ s\in (\lambda_+, \lambda_- +\ell),\\
 H^0(S_0, \pQ_{0,0}^{1,0}\otimes L^{-s}(-1))& = \bC^k \ {\rm for}\ s\in (\lambda_+-\ell, \lambda_-  ).
\end{align}
\end{theorem}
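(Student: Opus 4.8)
The plan is to run the twistor-transform argument of Hitchin \cite{Hitchin:1983ay}, in the form adapted in Hurtubise--Murray \cite[Prop 1.17]{HurtubiseMurray}. The coboundary map of the defining sequence \eqref{cal-E-sequence}, tensored by $L^{-s}(-j)$, is already in hand: it carries $H^0$ of the right-hand $\pQ$-sheaves on $S_0$, $S_1$ into $H^1(Z_0,\pE(-j)\otimes L^{-s})$, and the Penrose transform identifies the target with solutions of the twisted Laplace ($j=2$) or Dirac ($j=1$) equation for $A-sa\bI$. The first task is to show that this coboundary is injective (indeed an isomorphism onto the relevant summand), since it is injectivity that lets a vanishing theorem for solutions descend to a vanishing theorem for the $H^0$. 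This reduces to the vanishing of the neighbouring cohomology of the rank-one sheaves $\pP_{0,0}^{0,1},\pP_{0,1}^{0,0}$ and of the fused sheaf $\pQ$ of \eqref{define-pQ2}, which I would read off from the line-bundle descriptions \eqref{Ps}--\eqref{Qs}, the genericity hypothesis \eqref{genericity}, and a degree count on the spectral curves via Grothendieck--Riemann--Roch.

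The analytic core is the decay argument for the Laplace equation. A solution $f$ of the twisted Laplace equation that decays at infinity satisfies, after integration by parts on the Taub-NUT $X_0$, $\int_{X_0} |\nabla f|^2 = 0$ (the boundary term at infinity vanishing by the decay), so $f\equiv 0$; this is exactly the statement that $\pE\otimes L^{-s}$ carries no decaying harmonic sections. The correspondence of the previous paragraph then forces $H^0(S_i,\pQ\otimes L^{-s}(-2))=0$ in the stated ranges. The switch between the curve $S_1$ (length $k+m$) and the curve $S_0$ (length $k$) as $s$ crosses the $\lambda$-points is dictated by which bow interval $s$ lies in --- the `large' interval $(\lambda_-,\lambda_+)$ carrying $N_1$, or its complement carrying $N_0$ --- exactly as in the Nahm-transform picture of Section~\ref{mpos}; the boundary values $s=\lambda_\pm$ enter only when $m=0$, where the rank does not jump. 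For the Dirac equation the kernel is instead the $L^2$ index bundle whose fibre is $\mathrm{Ker}\,\mathbf{D}_t^\dagger$, so these solutions are genuinely present and must be counted in the final step.

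The dimension count is obtained by promoting the Laplace vanishing through the fibrewise restriction sequence. Tensoring
\begin{equation*}
0\to \pO(-2)\to \pO(-1)\to \pO(-1)|_f\to 0
\end{equation*}
(with $f$ a fibre of $\bO(2)\to\bP^1$) by the spectral sheaves and taking cohomology over $S_0$ or $S_1$, the vanishing of $H^0(\,\cdot\,\otimes L^{-s}(-2))$ just established --- together with the concentration of the hypercohomology in a single degree coming from the positivity of $\mathbf{D}_t^\dagger\mathbf{D}_t$ in Section~\ref{mpos} --- collapses the long exact sequence so that $H^0(S_i,\pQ\otimes L^{-s}(-1))$ is computed by $H^0$ of the fibre restriction. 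The latter is the space of sections of the restriction of $\pQ\otimes L^{-s}(-1)$ to $f$, a torsion sheaf of length $k+m$ on $S_1\cap f$ (respectively $k$ on $S_0\cap f$), giving $\bC^{k+m}$ and $\bC^k$ and completing the Dirac count.

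I expect the main obstacle to lie in the first paragraph together with the range bookkeeping of the second: pinning down precisely which twist and which spectral curve realize the decaying-versus-$L^2$ dichotomy as $s$ crosses the $\lambda$-points, where the relevant bundle rank jumps, and handling the boundary cases $s=\lambda_\pm$ (admissible only for $m=0$) separately. The identification of the Penrose transform with the correct line-bundle twists, and the injectivity of the coboundary, are the places where the specific geometry of the chain of conics over $\eta=0$ --- rather than a generic twistor argument --- has to be used.
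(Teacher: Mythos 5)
Your route is the same as the paper's: the paper proves this exactly by combining the coboundary maps of the sequence \eqref{cal-E-sequence} with the twistor transform (the $j=1,2$ dichotomy for Dirac/Laplace stated just before the theorem), invoking the argument of Hurtubise--Murray \cite[Prop 1.17]{HurtubiseMurray} (itself following Hitchin \cite{Hitchin:1983ay}) for the correspondence with decaying, respectively $L^2$, solutions and for the fact that decaying solutions of the coupled Laplace equation vanish, and then running the fibre-restriction sequence $\pO(-2)\to\pO(-1)\to\pO(-1)|_f$ for the dimension count. Your first two paragraphs, including the observation that injectivity of the coboundary is what lets the analytic vanishing descend to the $H^0$ vanishing (which indeed rests on the absence of sections of $L^\mu(n)$, $\mu\neq 0$, over $\bO(2)$ and on the genericity hypothesis \eqref{genericity}), are consistent with this.

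The gap is in your third paragraph. To get the equalities $H^0(S_0,\pQ_{1,0}^{0,0}\otimes L^{-s}(-1))=\bC^k$ and $H^0(S_1,\pQ_{0,1}^{0,1}\otimes L^{-s}(-1))=\bC^{k+m}$ you need the restriction map to the fibre to be \emph{surjective} on sections, i.e. you need the connecting map into $H^1(S_i,\pQ\otimes L^{-s}(-2))$ to vanish; in practice you need $H^1(S_i,\pQ\otimes L^{-s}(-2))=0$. The ``concentration of hypercohomology in a single degree coming from the positivity of $\mathbf{D}_t^\dagger\mathbf{D}_t$'' does not supply this: that positivity is a statement about the bow-monad complex, not about the coherent cohomology of torsion sheaves on the spectral curves, and as written your long exact sequence yields only the upper bounds $h^0\le k$ (resp.\ $\le k+m$). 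The missing ingredient is elementary, and you already have the tools in hand: by Riemann--Roch on the spectral curve, $\chi(\pQ\otimes L^{-s}(-2))=0$. Indeed, using the identifications \eqref{Qs}, $S_0$ is a $k$-sheeted cover of $\bP^1$ of arithmetic genus $(k-1)^2$ with $\deg\pO(1)|_{S_0}=k$, $\deg L^{-s}|_{S_0}=0$, and $\deg S_{1,0}=k(k+m)$ (half of $S_0\cdot S_1$, by the real structure), so $\deg\bigl(\pQ_{1,0}^{0,0}\otimes L^{-s}(-2)\bigr)=(2k+m)k-k(k+m)-2k=k^2-2k$ and $\chi=k^2-2k-(k-1)^2+1=0$; the computation on $S_1$ (genus $(k+m-1)^2$, $\deg\pO(1)|_{S_1}=k+m$) gives $\chi=0$ as well. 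Hence the Laplace vanishing $h^0=0$ forces $h^1=0$, the long exact sequence collapses, and the counts follow. (Equivalently, $h^1=0$ can be obtained from Serre duality together with the quaternionic structure, as Hitchin does for monopoles.) With that substitution for your appeal to positivity, your argument is complete and coincides with the paper's.
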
 
 This concords with the index calculation for Dirac operators of \cite{Cherkis:2016gmo}. For $s$ in the interior of the intervals,   the solutions have exponential decay, with the exponent of the bound given by minus the distance between $s$ and the closest ends of the interval  to which $s$ belongs given in the theorem.

\subsection{Constructing the Nahm flows} 

The bow solutions corresponding to the instanton can be constructed following more or less exactly the scheme for monopoles given in \cite[Section 2]{HurtubiseMurray}. In brief, with our genericity assumptions:
\begin{itemize}
\item One defines a rank $k$ bundle $N_{0,-}$ over  $[\lambda_+-\ell, \lambda_-  ]$ by  
$$N_{0,-}(s) = H^0(S_0, \pQ_{0,0}^{1,0}\otimes L^{-s}(-1));$$ 
a rank $k$ bundle $N_{0,+}$ over $[\lambda_+, \lambda_- +\ell]$ by 
$$N_{0,+}(s) = H^0(S_0, \pQ_{1,0}^{0,0}\otimes L^{-s}(-1));$$ 
and a rank $k+m$ bundle $N_1$ over $(\lambda_-, \lambda_+)$ by 
$$N_1(s) = H^0(S_1, \pQ_{0,1}^{0,1}\otimes L^{-s}(-1)).$$
One can take limits, and extend to a bundle over $[\lambda_-, \lambda_+]$
\item At the boundary point $\lambda_-$  the sheaves $\pQ_{0,0}^{1,0}\otimes L^{-s}(-1)), \pQ_{0,1}^{0,1}\otimes L^{-s}(-1)$ are identified with the restrictions to $S_0$, $S_1$ of the sheaves $L^{\lambda_--s}\otimes M$, where $M$ is some algebraic sheaf on $\bO(2)$. At $\lambda_-=s$, their sections become the restrictions of global sections of $\pP_{0 ,1}^{0 ,0}\otimes L^{-\lambda_-} = M$, and so can `pass' from one spectral curve to the other. This provides the glueing of $N_{0,-}$ to $N_1$.
\item The same thing happens at the boundary point $\lambda_+$:  this time the sheaf $\pP^{0 ,1}_{0 ,0}\otimes L^{-\lambda_+}$
is the intermediary.
\item This gives us our generalized bundle $N.$ We can now define an {$\pO(2)$-valued} endomorphism $\underline A(\zeta,s) =\underline A_0(s) + \underline A_1(s)\zeta + \underline A_2(s)\zeta$   as the action on sections induced by multiplication by $\eta$ on the $\pQ$s. 
\item A connection of the form $\nabla_s =  d_s + \underline A_0(s) + \underline A_1(s)\zeta/2 \buildrel{def}\over{=} d_s + \underline A_+(s,\zeta)$ can be defined on the bundle $N$, and using this to trivialise the bundle $N$, and so turn the endomorphism  $\underline A(\zeta,s) $ into a matrix  $ A(\zeta,s)$,   one then has that the Nahm equations $d_sA(\zeta,s) = [A_+(s,\zeta), A(s,\zeta)]$ are satisfied.
\item Finally, using the real structures on the $Q$ induced from those on $E$, one has a real structure on $N$, with respect to which our solutions to the Nahm's equations are skew-Hermitian.
\item The flows are the same as those obtained directly from the instanton via the Down transform \cite{Third}, a generalization of the ADHM-Nahm transform.
\end{itemize}

All of this follows a well established pattern: for the flows, it appears in Hitchin \cite{Hitchin:1983ay};  for the glueing of intervals, in Hurtubise and Murray \cite{HurtubiseMurray}; and for the equivalence with the Nahm transform, in a form closest to what we have here, in  Charbonneau and Hurtubise \cite{Charbonneau:2007zd}. There remains the bifundamental part of the bow data, which closes up the flow on our intervals to flows on the bow, with bifundamental maps along the edges of the bow. Indeed, for $a\in (\lambda_+, \lambda_-+\ell)$, the sheaf maps \eqref{hatheadtail} and \eqref{hattailhead} of multiplication by $\xi, \psi$ give, respectively, maps on sections
\begin{equation} \label{headtail1} \widehat \pB_{h,t}: N_{0,+}(a) = H^0(S_0, \pQ_{ 1,0}^{0,0}\otimes L^{-a} (-1)) \rightarrow  H^0(S_0, \pQ_{0 ,0}^{ 1,0}\otimes L^{ -a+\ell}), \end{equation}
\begin{equation} \label{tailhead1} \widehat \pB_{h,t}: N_{0,-}(a-\ell)=  H^0(S_0,  \pQ^{ 1,0}_{0,0}\otimes L^{-a+\ell}(-1))\rightarrow   H^0(S_0, \pQ^{0 ,0}_{ 1,0}\otimes L^{-a}).\end{equation}
Using our vanishing theorem, one can identify 
\begin{multline*}
H^0(S_0, \pQ_{0 ,0}^{ 1,0}\otimes L^{ -a+\ell}) = H^0(S_0, \pQ_{0 ,0}^{ 1,0}\otimes L^{ -a+\ell}(-1))\otimes H^0(S_0,\pO(1))\\ 
= N_{0,-}(a-\ell)\otimes (\bC\oplus \zeta\bC)
\end{multline*}
\begin{multline*}
H^0(S_0, \pQ^{0 ,0}_{ 1,0}\otimes L^{-a}) = H^0(S_0, \pQ^{0 ,0}_{ 1,0}\otimes L^{-a}(-1)) \otimes H^0(S_0,\pO(1))\\ =  N_{0,+}(a)\otimes (\bC\oplus \zeta\bC),
\end{multline*}
and so one can write \eqref{headtail1} and \eqref{tailhead1} as
\begin{equation} \label{head-tail} \pB_{h,t}(\zeta, a) =  \pB_{h,t,0}(a)+   \pB_{h,t,1}(a)\zeta : N_{0,+}(a)   \rightarrow N_{0,-}(a-\ell) , \end{equation}
for the multiplication by $\xi.$ Likewise, multiplication by $\psi$ induces 
\begin{equation} \label{tail-head}  \pB_{t,h}(\zeta,a-\ell) =  \pB_{t,h,0}(a-\ell)+  \pB_{t,h,1}(a-\ell)\zeta:N_{0,-}(a-\ell)\rightarrow   N_{0,+}(a).\end{equation}
The fact that $\xi\psi = \eta$ translates into the relations
\begin{align}
\pB_{h,t}(\zeta, a)  \pB_{t,h}(\zeta,a-\ell) =A(\zeta,a-\ell)& : N_{0,-}(a-\ell)\rightarrow N_{0,-}(a-\ell),\\
 \pB_{t,h}(\zeta,a-\ell)\pB_{h,t}(\zeta, a) =  A(\zeta,a )&: N_{0,+}(a)\rightarrow  N_{0,+}(a).
 \end{align}
 
 Summarising:
 
 \begin{theorem} 
 Through the sequence (\ref{cal-E-sequence}), and the construction given in this section, a bundle $\pE$ on the twistor space $Z_0'$ corresponding to a Taub-NUT instanton defines a bow solution as in subsection \ref{Bow} above.
 
 This solution coincides with the one obtained through the Down transform, a generalization of the ADHM-Nahm transform, from instanton to bow solution.
 
 With our genericity assumption (\ref{genericity}) the instanton (or the bow solution to the following Nahm's equations) are equivalent to {\it spectral data} on $\bO(2)$:
 \begin{itemize}
 \item {\it Spectral curves} $S_0, S_1$ in $\bO(2)$, of degree $2k, 2k+2m$ respectively, both real.
 \item  A partition of the intersection $S_0\cap S_1$ into two divisors $S_{0,1}, S_{1,0}$ interchanged by the real structure.
 \item Over $S_0$, an isomorphism $L^{ \lambda_+}  [-S_{1,0}] =  L^{ -\ell +\lambda_-} [-S_{0,1}],$  
 and over $S_1$ $L^{  \lambda_+}  [-S_{1,0}]  =  L^{ \lambda_-} [-S_{0,1}].$
  \item  Sheaves $\pQ^{0 ,0}_{ 1,0}, \pQ_{0 ,0}^{ 1,0}$ on $S_0$, with 
 \begin{itemize}
 \item Holomorphic Euler characteristic $k$;
 \item  Isomorphisms \eqref{shiftup} and \eqref{shiftdown}   $\pQ_{1,0}^{0,0} =  \pQ_{0 ,0}^{ 1,0}\otimes L^{ \ell}(1)[-S_{0,\psi}]$ and  $\pQ^{1,0}_{0,0}= \pQ^{0 ,0}_{ 1,0}\otimes L^{ -\ell}(1)[-S_{0,\xi}] $ over $S_0$;
 \item A vanishing theorem \ref{vanishing};
  \item A quaternionic structure $\pQ^{0 ,0}_{ 1,0}\rightarrow \pQ_{0 ,0}^{ 1,0}$, $\pQ_{0 ,0}^{ 1,0}\rightarrow \pQ^{0 ,0}_{ 1,0}$, lifting the real structure on $\bO(2)$.

 \end{itemize}
  \item A sheaf $\pQ^{0 ,1}_{ 0,1}$ on $S_1$, with 
  \begin{itemize}
 \item Holomorphic Euler characteristic $k+m$;
 \item A vanishing theorem \ref{vanishing};
 \item A quaternionic structure on $\pQ^{0 ,1}_{ 0,1}$, lifting the real structure on $\bO(2)$.
 \end{itemize}
 \end{itemize} 
 \end{theorem}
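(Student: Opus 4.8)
The plan is to establish the three assertions in turn, following the route of Hitchin \cite{Hitchin:1983ay} and Hurtubise--Murray \cite{HurtubiseMurray} for monopoles and of Charbonneau--Hurtubise \cite{Charbonneau:2007zd} for calorons; the one genuinely new feature is the bifundamental closure forced by the factorization $\eta = \xi\psi$ into the two divisors $\Delta_\xi, \Delta_\psi$.

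For the forward direction (bundle $\pE$ to bow solution), most of the work has already been done in the preceding subsections, and it remains to assemble it. From $\pE$ one forms $\pF$ and the quotient sheaves $\pQ_{1,0}^{0,0}, \pQ_{0,0}^{1,0}$ on $S_0$ and $\pQ_{0,1}^{0,1}$ on $S_1$; the spectral curves appear as the supports (the loci of jumping lines), their degrees $2k$, $2k+2m$ come from Grothendieck--Riemann--Roch, and their reality from the real structure $\sigma$ lifted to $\pE$ by $\tau$. Defining $N_{0,\pm}(s), N_1(s)$ as the section spaces of Theorem~\ref{vanishing}, the vanishing statements there fix the ranks $k,k,k+m$ and hence the Euler characteristics. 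The Lax-pair/flow-of-line-bundles dictionary then yields $A(\zeta,s)$ solving Nahm's equations on each interval, with the $\pO(2)$-valued endomorphism induced by multiplication by $\eta$; the gluing across $\lambda_\pm$ is mediated by sections of $\pP_{0,1}^{0,0}\otimes L^{-\lambda_-}$ and $\pP_{0,0}^{0,1}\otimes L^{-\lambda_+}$, which restrict to both spectral curves and supply the boundary injections $i_\pm$ and the continuing components. Finally the bifundamental maps $\pB_{h,t}, \pB_{t,h}$ are read off from \eqref{head-tail} and \eqref{tail-head}, and $\xi\psi = \eta$ gives the relations $\pB_{h,t}\pB_{t,h}=A$, $\pB_{t,h}\pB_{h,t}=A$ that close the flow into a bow.

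For the reverse direction I would invert this: starting from spectral curves of the prescribed degrees, the partition $S_{0,1}\cup S_{1,0}$, the isomorphisms of the Proposition over $S_0$ and $S_1$, and the sheaves $\pQ$ with their quaternionic structures, one rebuilds $\pF$ by \eqref{description-of-F-2} and then $\pE$ through \eqref{cal-E-sequence}, with $\pQ$ defined by \eqref{define-pQ2}; the isomorphisms \eqref{shiftup}, \eqref{shiftdown} are precisely the compatibility needed for $\widehat\pB_{h,t}, \widehat\pB_{t,h}$ to close up, and the vanishing theorem forces the cohomology of \eqref{cal-E-sequence} to be a locally free $\pE$. The real structure on the data produces $\tau$, hence an instanton by the inverse twistor correspondence. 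The coincidence with the Down transform of \cite{Third} then follows as in \cite{Charbonneau:2007zd}: both produce the same Nahm flow, since in each the endomorphism and the bifundamental maps are induced by multiplication by $\eta, \xi, \psi$ on the very same spectral sheaves, so the two sets of matrices agree term by term.

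The main obstacle, exactly as for monopoles and calorons, is the boundary analysis at the $\lambda$-points. One must verify that the reconstructed Nahm data exhibits the singular behaviour demanded in subsection~\ref{Bow}: for $m\neq 0$, poles whose residues form an irreducible $su(2)$-representation of dimension $|m|$, together with correctly matching continuing components; for $m=0$, the rank-one jump of $\beta$. This rests on the delicate behaviour of sections of $L^t$ on the spectral curves as $t\to 0$, controlled in Hurtubise--Murray \cite{HurtubiseMurray}, and on checking that the forward and reverse passages are mutually inverse there. The genericity hypothesis \eqref{genericity} is what keeps $S_0, S_1$ off the higher-codimension Birkhoff strata and makes $S_{0,1}, S_{1,0}$ genuine divisors, without which this bookkeeping fails.
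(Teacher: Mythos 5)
Your proposal is correct and follows essentially the same route as the paper: the forward direction is exactly the paper's construction in this section (bundles $N_{0,\pm}, N_1$ as section spaces of the $\pQ$ sheaves twisted by $L^{-s}(-1)$, Nahm flows induced by multiplication by $\eta$, gluing at $\lambda_\pm$ mediated by the $\pP$ sheaves, bifundamental maps from multiplication by $\xi$ and $\psi$ with $\xi\psi=\eta$ closing the flow into a bow, reality from the quaternionic structure), with the spectral-data equivalence and the coincidence with the Down transform handled, as in the paper, by appeal to the established pattern of Hitchin, Hurtubise--Murray and Charbonneau--Hurtubise. Your explicit sketch of the reverse reconstruction via \eqref{description-of-F-2} and \eqref{cal-E-sequence}, and your flagging of the $\lambda$-point boundary analysis as the delicate step, match what the paper leaves implicit in stating the theorem as a summary of the preceding construction.
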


 \subsection{Twistors and monads} 
 
 The bundles $\bE$ on twistor space corresponding to an instanton can be obtained by a version of the monad construction; however, the non-algebraic nature of the line bundles $L^\mu$ entering into the construction of $\pE$ makes a global construction difficult, but one can build two monads, one over $\zeta\neq \infty$, and one over $\zeta\neq 0$ which are quasi-isomorphic on the overlap, in that there is a monad morphism between the two inducing an isomorphism on the cohomology. 
 
 We had for our instanton bundle $\pE$, a sequence of sheaves (\ref{cal-E-sequence}) determining $\pE$. With our genericity assumptions we have seen above, the constituent sheaves are:
  \begin{align}\pP_{0,0}^{0 ,1} &= L^{ \lambda_-}(2k+m)\otimes \pI_{S_{1,0}},\\
\pP_{0 ,1}^{0 ,0}&=  L^{ \lambda_+}(2k+m)\otimes \pI_{S_{0,1}},\\
\pQ_{0,0}^{ 1 ,0}  &= L^{  \lambda_-}(2k+m)|_{S_0}[-S_{1,0}] =  L^{ -\ell +\lambda_+}(2k+m)|_{S_0}[-S_{0,1}],\\
\pQ_{0,1}^{0 ,1}&=   L^{ \lambda_+}(2k+m)|_{S_1}[-S_{0,1}]  =  L^{\lambda_-}(2k+m)|_{S_1}[-S_{1,0}],\\
\pQ_{1,0}^{0 ,0}  &= L^{ \ell + \lambda_-}(2k+m)|_{S_0}[-S_{1,0}] =  L^{ \lambda_+}(2k+m)|_{S_0}[-S_{0,1}],
 \end{align}
as well as the sheaf $\pQ$ defined using them via (\ref{define-pQ2}).
 We would like to get some resolutions of these sheaves, in a fairly compatible way, achieving over the twistor space what we had over $\zeta = 0$.  One issue is that the line bundle $L^\mu$ has no sections, not even meromorphic ones, over $\bO(2)$. We therefore  tensor our sheaves by a suitable power of $L$, to make the resulting bundle algebraic.
 
  We begin with 
 $$ \pQ_{0,0}^{ 1 ,0}\leftarrow \pP_{0,0}^{0 ,1}\rightarrow \pQ_{0,1}^{0 ,1};$$
 tensoring this by $L^{- \lambda_-}$, we get
 $$\pO(2k+m)|_{S_0}[-S_{1,0}] \leftarrow \pO(2k+m)\otimes \pI_{S_{1,0}}\rightarrow \pO(2k+m)|_{S_1}[-S_{1,0}].$$
This extends to  a ruled surface compactification $\overline{\pO(2)}= \bP(\pO(2)\oplus \pO)$ of $\pO(2)$:
  $$\pO(2k+m)|_{S_0}[-S_{1,0}] \leftarrow \pO(2k+m, k+m-1)\otimes \pI_{S_{1,0}}\rightarrow \pO(2k+m)|_{S_1}[-S_{1,0}],$$
 where the notation $(m,n)$ refers to tensoring by the line bundles $\pO(mC_\zeta + n C_\infty)$, with $C_\zeta$ a fiber of the projection to $\pi: \overline{\pO(2)}\rightarrow \bP^1$, and $C_\infty$ the divisor at infinity one is adding to compactify. We note that $C_\infty$ does not intersect the spectral curves. 
 On the fiber product $\overline{\pO(2)}\times_{\bP^1} \overline{\pO(2)}$, one has a resolution of the diagonal
 $$0\rightarrow   \pO(-2,-1)\boxtimes \pO(-2-1) \xrightarrow{\eta-\eta'} \pO \rightarrow \pO|_\Delta\rightarrow 0.$$
 Tensoring this with the lift of  our sheaves $W$ from one factor, then pushing down to the other will give a resolution of the form
 $$0\rightarrow \pi^*\pi_*W(-2) \rightarrow \pi^*\pi_*W \rightarrow W \rightarrow 0.$$
 
 For the first sheaf, from the Vanishing Theorem~\ref{vanishing}, $H^0(S_0, \pO(2k+m-2) [-S_{1,0}]) = 0$, one obtains $\pi_*( \pO(2k+m-2)|_{S_0} [-S_{1,0}])= \pO(-1)^{\oplus k}, \pi_*( \pO(2k+m)|_{S_0} [-S_{1,0}])= \pO(1)^{\oplus k}$, and our resolution becomes, when restricted to the complement of $C_\infty$,
 $$\pU_{0,-}=\pO(-1)^{\oplus k}\xrightarrow{ A^0(\zeta,\lambda_-)-\eta\bI} \pV_{0,-}= \pO(1)^{\oplus k} \rightarrow \pO(2k+m)|_{S_0}[-S_{1,0}]\rightarrow  0.$$
 The matrix $ A^0(\zeta,\lambda_-)$ is up to conjugation the matrix involved in our solution to the Nahm equations.
 
 For the third, one has the result in Murray-Hurtubise, \cite[Prop.~2.21]{HurtubiseMurray}, that the space of sections of $\pO(2k+m)|_{S_1}[-S_{1,0}]$ splits as a sum of the sections on $S_0$, plus a family of sections vanishing on all of $S_0$, and therefore sections of $\pO(m)$. The pushdown of the first yields  $\pO(1)^{\oplus k}$, as before; the second one gives $\pO(m) \oplus 
 \pO(m-2)\oplus...\oplus \pO(-m+2)$. The resolution is then 
\begin{multline*}
\pU_{1}= \pO(-1)^{\oplus k} \oplus \pO(m-2) \oplus 
 \pO(m-4 )\oplus...\oplus \pO(-m )\xrightarrow{  A^1(\zeta)-\eta\bI}  \\
\pV_{1}=  \pO(1)^{\oplus k}\oplus \pO(m) \oplus 
 \pO(m-2)\oplus...\oplus \pO(-m+2) \rightarrow \pO(2k+m)|_{S_1}[-S_{1,0}]\rightarrow  0.\end{multline*}
 Here $  A^1(\zeta)-\eta\bI$ is given by:
 $$A^1(\zeta,\lambda_-)-\eta\bI = \begin{pmatrix}  A^0(\zeta,\lambda_-)-\eta\bI& C_1(\zeta)e_+ \\ -e_-^T A'(\zeta) &(\Sh-\eta\bI) + C_1(\zeta)e_+  \end{pmatrix},$$
 with $\Sh, e_-, e_+$ as before.
 
 For the middle sheaf, one can again decompose as sections on the spectral curve $S_0$, and sections vanishing on $S_0$, giving   similar pushdowns as for the third term, and the resolution
\begin{multline*}
\pU_-=\pO(-1 )^{\oplus k}   \oplus \pO(m-2 ) \oplus  
 \pO(m-4  )\oplus... \pO(-m+2  ){\xrightarrow{ \widetilde A^1(\zeta,\lambda_-)-\eta\bI}}\\
\pV_- =\pO(1)^{\oplus k} \oplus \pO(m) \oplus 
 \pO(m-2)\oplus...\oplus \pO(-m+2) \rightarrow    \pO(2k+m)\otimes \pI_{S_{1,0}} \rightarrow 0. 
 \end{multline*}
Here $\widetilde A^1(\zeta,\lambda_-)-\eta\bI$ is $  A^1(\zeta,\lambda_-)-\eta\bI $ with the last column removed.
We then have, shifting back by $L^{\lambda_-}$, the diagram:
\begin{equation} \label{resolution-twistor-PQ-TN-1}
 \xymatrix{    
 & \pU_{0,-}\otimes L^{\lambda_-}\ar[rr]^{ A^0(\zeta,\lambda_-)-\eta\bI} &&\pV_{0,-}\otimes L^{\lambda_-} \ar[r]  &\pQ_{0 ,0}^{ 1,0} \\
\pU_{-}\otimes L^{\lambda_-}\ \ \ar[dr]^{-X_{ -,1}  } \ar[rr]^{\widetilde A^1(\zeta,\lambda_-)-\eta\bI}\ar[ur]^{-X_{-,0} }&&  \pV_{-}\otimes L^{\lambda_-}\ar[dr]^{Y_{-,1}}\ar[ur]^{Y_{-,0}}\ar[r]  &\pP_{0,0}^{0,1}\ar[dr] \ar[ur]\\
 & \pU_{1}\otimes L^{\lambda_-} \ar[rr]^{A^1(\zeta,\lambda_-)-\eta\bI} &&\pV_{1}\otimes L^{\lambda_-}\ar[r]  &\pQ_{0,1}^{0,1}. 
 }
\end{equation}
The matrices $X_{-,0}, Y_{-,0}$ are projections onto the first $k$ coordinates;   $X_{-,1}$ is inclusion into the first $k+m-1$ coordinates;  $Y_{-,1}$ is the identity.
Likewise, one has 
     \begin{equation} \label{resolution-twistor-PQ-TN-2}\xymatrix{    
      & \pU_{1}\otimes L^{\lambda_+} \ar[rr]^{A^1(\zeta,\lambda_+)-\eta\bI} &&\pV_{1}\otimes L^{\lambda_+}\ar[r]  &\pQ_{0,1}^{0,1} \\
\pU_{+}\otimes L^{\lambda_+}\ \ \ar[ur]^{-X_{ +,1}  } \ar[rr]^{\widetilde A^1(\zeta,\lambda_+)-\eta\bI}\ar[dr]^{-X_{+,0} }&&  \pV_{+}\otimes L^{\lambda_+}\ar[ur]^{Y_{+,1}}\ar[dr]^{Y_{+,0}}\ar[r]  &\pP_{0,0}^{0,1}\ar[dr] \ar[ur]\\
 & \pU_{0,+}\otimes L^{\lambda_+}\ar[rr]^{ A^0(\zeta,\lambda_+)-\eta\bI} &&\pV_{0,+}\otimes L^{\lambda_+} \ar[r]  &\pQ^{0 ,0}_{ 1,0}. \\
 }
\end{equation}
The definitions of $A^0(\zeta,\lambda_+), A^1(\zeta,\lambda_+)-\eta\bI, \widetilde A^1(\zeta,\lambda_+)-\eta\bI,$ $ X_{ +,1}, Y_{ +,1}, X_{ +,0}, Y_{+,0}$ are the exact analogs of the matrices  $A^0(\zeta,\lambda_-),  A^1(\zeta,\lambda_-)-\eta\bI, \widetilde A^1(\zeta,\lambda_-)-\eta\bI, X_{ -,1}, Y_{ -,1}, X_{ -,0}, Y_{-,0}$. 

There remains the sheaf $\pQ$, defined in \eqref{define-pQ2}. Unlike the others, it is not a lift from $\bO(2)$, but must be defined on $Z_0.$  Its support, on the other hand, is on the lift of the spectral curve $S_0$, and it is defined in terms of the sheaves $\pQ_{0 ,0}^{ 1,0}, \pQ^{0 ,0}_{ 1,0}$. However, since the fibers under  $\pi: \bO(2)\rightarrow \bP^1$ of the  support of these sheaves are discrete, local sections of  $\pQ_{i,j}^{ k,l}$ can always be represented by local sections of $\pi^*\pi_*\pQ_{i,j}^{ k,l}$, giving a resolution:
 \begin{equation} \label{define-pQ2-twistor}\xymatrix{  
\pU_{0,-}\otimes L^{\lambda_+ }(-\Gamma_\infty)   \ar[ddr]_(0.7){- \pB_{t,h}}\ar[r]^(0.43){\xi\bI} &  \pV_{0,-}\otimes L^{\lambda_++\ell} (-\Gamma_0 ) (-1)   \ar[dr] \\ 
 &&\pQ. \\
 \pU_{0,+}\otimes L^{\lambda_+ +\ell}( -\Gamma_0)  \ar[uur]^(0.7){ -  \pB_{h,t}}\ar[r]^(0.43){\psi\bI} &  \pV_{0,+}\otimes L^{\lambda_+ }(-\Gamma_\infty )  (-1) \ar[ru]  \\}
\end{equation}

 \subsubsection {Monads on open sets}
 
We now must sew these pieces together to get a monad for bundles. We note that the pieces already assembled involve the bundles $L^{\lambda_+}, L^{\lambda_-}$. These bundles $L^\mu$, over $\bO(2)$, have no meromorphic sections for $\mu\neq 0$, and similarly there is no meromorphic map from $L^{\lambda_+}$ to $L^{\lambda_-}$. On the other hand, over the spectral curves, things are more flexible. In any case, let us cover $\bO(2)$ by two standard open sets $D_\alpha, D_\beta$ corresponding to $\zeta \neq \infty, \zeta\neq 0$ respectively. Let  $Z_\alpha, Z_\beta$ be their inverse images in the twistor space. One has trivialisations of $L^\mu$ over $D_\alpha, D_\beta$, related by $\exp(\mu \eta/\zeta)$ on the overlap.

We look at the maps $ \pP_{0,0}^{0,1}\rightarrow \pQ^{0,1}_{0,1}\leftarrow \pP^{0,0}_{0,1}$ where now we don't twist by a power of $L$; let us write the resolution of  $\pQ^{0,1}_{0,1}$ as
  \begin{equation}  \xymatrix{  
       \widetilde \pU_{1}  \ar[rr]^{  A^1(\zeta,0)-\eta\bI} &&\widetilde\pV_{1}  \ar[rr]  &&\pQ_{0,1}^{0,1}}.\end{equation}
Here we suppose for simplicity that $0\in (\lambda_-,\lambda_+)$, and so the vanishing theorem applies there. This yields, in the same way as for $\pQ^{1,0}_{0,0}$ above, that  $\widetilde \pU_{1}  = \pO(-1)^{k+m}, \widetilde \pV_{1}  = \pO( 1)^{k+m}$.  
 Using the trivializations of $L^{\lambda_+}, L^{\lambda_-}$, we can compactify  fiberwise over $\zeta\neq \infty, 0$, lift up and push down on the restriction of the fiber product $\overline{\pO(2)}\times_{\bP^1} \overline{\pO(2)}$ to $\zeta\neq \infty, 0$ to get a diagram over $D_\alpha $:
         
         \begin{equation}  \xymatrix{  
   \pU_{-}\otimes L^{\lambda_-}\ \ \ar[dr]^{-\widetilde X_{ -,1}  } \ar[rr]^{\widetilde A^1(\zeta,\lambda_-)-\eta\bI} &&  \pV_{-}\otimes L^{\lambda_-}\ar[dr]^{\widetilde Y_{-,1}} \ar[rr]  &&\pP_{0,0}^{0,1}\ar[dr] \\      
        & \widetilde \pU_{1}  \ar[rr]^{  A^1(\zeta,0)-\eta\bI} &&\widetilde \pV_{1} \ar[rr]  &&\pQ_{0,1}^{0,1}, \\
\pU_{+}\otimes L^{\lambda_+}\ \ \ar[ur]^{ -\widetilde X_{ +,1}  } \ar[rr]^{\widetilde A^1(\zeta,\lambda_+)-\eta\bI} && \pV_{+}\otimes L^{\lambda_+}\ar[ur]^{ \widetilde Y_{+,1}} \ar[rr]  &&\pP^{0,0}_{0,1}  \ar[ur]}\end{equation}
with a similar picture over $D_\beta.$

The resolutions of    $\pQ^{1,0}_{0,0}$ are in some sense a bit simpler, essentially because the matrices $A^0(\zeta,s)$ are regular on the whole interval $[\lambda_+-\ell, \lambda_-]$, and the relevant vanishing theorem applies for all $s$ in this interval; this means that one can resolve  $\pQ^{1,0}_{0,0}$ in terms of a sum of $L^s$ for any $s$ in the interval; furthermore, the relevant matrices $A^0(\zeta,s)$ evolve by ($\zeta$-dependent) conjugation:  $A^0(\zeta,s) =  h_{\alpha, s, s'}A^0(\zeta,s') h_{\alpha, s, s'}^{-1}$. One can then use this to write  the map from $\pQ$ to $\pQ^{0,1}_{0,1}$   as
  \begin{equation} 
  \xymatrix@C=0.9em{  
\pU_{0,-}\otimes L^{\lambda_+}(-\Gamma_\infty)    \ar[ddr]^(0.7){- \pB_{t,h}}\ar[r]^(0.44){\xi\bI} &  \pV_{0,-}\otimes L^{\lambda_++\ell} (-\Gamma_0 ) (-1)    \ar[drr] \\ 
 &&&\pQ \ar[ddr] \\
  \pU_{0,+}\otimes L^{\lambda_++\ell }( -\Gamma_0) \ar[dr]^(0.6){-h_{\alpha,-,\lambda_-,\lambda_+-\ell} }  \ar[uur]_(0.7){ -  \pB_{h,t}}\ar[r]^(0.44){\psi\bI} &  \pV_{0,+}\otimes L^{\lambda_+}(-\Gamma_\infty )  (-1)\ar[dr]^(0.6){\ \ \xi h_{\alpha,-,\lambda_-,\lambda_+-\ell}} \ar[rru]  \\
  & \pU_{0,-}\otimes L^{\lambda_-}\ar[r]^(0.44){A^0(\zeta,\lambda_-)-\eta\bI}  &\pV_{0,-}\otimes L^{\lambda_-} \ar[rr]  &&\pQ_{0 ,0}^{ 1,0},
  }
\end{equation}
with extra terms
$h_{\alpha, \lambda_-, \lambda_+-\ell}$, $h_{\beta, \lambda_-, \lambda_+-\ell}$.

One can then use this to glue our pieces of the monad, to get a monad over 
$Z_\alpha$:
  \begin{equation} \label{define-pQ2-monad}
  \xymatrix@C=0.9em{  
        &\widetilde \pU_{1}\otimes L^{\lambda_-} \ar[r]^{  A^1(\zeta,\lambda_-)-\eta\bI} &\widetilde \pV_{1}\otimes L^{\lambda_-}\ar[rr]  &&\pQ_{0,1}^{0,1} \\
\pU_{+}\otimes L^{\lambda_+}\ \ \ar[ur]^{-\widetilde X_{ +,1}  } \ar[r]^{\widetilde A^1(\zeta,\lambda_+)-\eta\bI}\ar[dr]^{-X_{+,0} }&  \pV_{+}\otimes L^{\lambda_+}\ar[ur]^{ \widetilde Y_{+,1}}\ar[dr]^{Y_{+,0}}\ar[rr] &&\pP^{0,0}_{0,1}\ar[dr] \ar[ur]\\
 & \pU_{0,+}\otimes L^{\lambda_+}\ar[r]^{ A^0(\zeta,\lambda_+)-\eta\bI}  &\pV_{0,+}\otimes L^{\lambda_+} \ar[rr]  &&\pQ^{0 ,0}_{ 1,0} \\  
\pU_{0,-}\otimes L^{\lambda_+}(-\Gamma_\infty) \ar[ur]^\bI  \ar[ddr]_(0.7){- \pB_{t,h}}\ar[r]^(0.43){\xi\bI} &  \pV_{0,-}\otimes L^{\lambda_++\ell} (-\Gamma_0 ) (-1)\ar[ur]_{\psi\bI}   \ar[drr]
\ar[rddd]^(0.4){\pB_{t,h}}  \\ 
 &&&\pQ\ar[uur]\ar[ddr] \\
 \pU_{0,+}\otimes L^{\lambda_++\ell }( -\Gamma_0) \ar[dr]^(0.6){-h_{\alpha,-,\lambda_-,\lambda_+-\ell} }  \ar[uur]^(0.7){ -  \pB_{h,t}}\ar[r]^(0.43){\psi\bI} &  \pV_{0,+}\otimes L^{\lambda_+}(-\Gamma_\infty )  (-1)\ar[dr]^(0.6){\ \ \xi h_{\alpha,-,\lambda_-,\lambda_+-\ell}} \ar[rru] \ar[ruuu]_(0.4){\pB_{h,t}}  \\
  & \pU_{0,-}\otimes L^{\lambda_-}\ar[r]^{A^0(\zeta,\lambda_-)-\eta\bI}  &\pV_{0,-}\otimes L^{\lambda_-} \ar[rr]  &&\pQ_{0 ,0}^{ 1,0} \\
\pU_{-}\otimes L^{\lambda_-}\ \ \ar[dr]^{-X_{ -,1}  } \ar[r]^{\widetilde A^1(\zeta,\lambda_-)-\eta\bI}\ar[ur]^{-\widetilde X_{-,0} } &  \pV_{-}\otimes L^{\lambda_-}\ar[dr]^{\widetilde Y_{-,1}}\ar[ur]^{Y_{-,0}}\ar[rr]  &&\pP_{0,0}^{0,1}\ar[dr] \ar[ur]\\
 & \widetilde \pU_{1}  \ar[r]^(0.43){A^1(\zeta,0)-\eta\bI}  &\widetilde \pV_{1} \ar[rr]  &&\pQ_{0,1}^{0,1}. 
  }
\end{equation}
 One has a similar monad over $Z_\beta$.

 \subsubsection{Glueing the two monads}
 
 This procedure gives monads for the bundle $\pE$ over two open sets $Z_\alpha, Z_\beta$ of the twistor space. The question then arises of how to glue them over the intersection.   This turns out to be fairly simple: one has resolutions over the open sets which basically only differ by the trivialization one has given to the line bundles $L^\ell$. One then just has a map from  the monads over $Z_\alpha$ to the monad over $Z_\beta$ of the form $\exp (\ell\eta/\zeta)$  on each factor $L^\ell$. 
 
We note that while the bundles $L^\mu$ which are involved in the definition of the monad are globally defined, there are no global morphisms, not even meromorphic, between $L^\mu$ and $L^\nu$ for $\mu\neq \nu$; on the other hand, on the intersection of  our open sets, there are morphisms, and these allow an identification of the cohomology bundle. In short, one has a choice: either a global monad, but infinite dimensional, given by Nahm's equations and the Nahm construction, or a pair of finite rank monads, each can only be given locally over the twistor sphere, but then can be glued together, however, non-algebraically.

\section{Conclusion}

The equivalence of the monad directly producing the instanton with the monad encoded by the bow solution establishes the one-to-one correspondence between instantons and  bow solutions.  

In fact, the structure of the monad itself is intimately related to the bow, with its various parts associated with bow elements.  The vertical direction in diagram~\eqref{resolution-PQ-TN-10} corresponds to the bow interval, parameterized by $s.$ Each $\lambda$-point of the bow representation corresponds to a horizontal sequence in the monad, the ones resolving a sheaf $\pP$, such as the second line in \eqref{resolution-PQ-TN-10}; each  subinterval of the bow corresponds to a  horizontal sequence resolving a sheaf $\pQ$; finally, the `bifundamental' data correspond also to a piece of the diagram, given for example by lines  four and five of \eqref{resolution-PQ-TN-10}. This picture holds both for the fixed complex structure (e.g. the restriction to $\zeta= 0$) or the full twistor space description.

Indeed, we have a decomposition of the monad diagram into `modules', not in the mathematical sense, but in the sense of parts or pieces, for example (\ref {resolution-twistor-PQ-TN-1}), (\ref {resolution-twistor-PQ-TN-2}), and (\ref{define-pQ2}), each saying how  solutions  to Nahm's equations patch together at the end of intervals, either at $\lambda_\pm$ or at the edges.

{\it Other unitary groups:} To streamline our exposition we chose to focus our discussion on $SU(2)$ instantons on a single-centered Taub-NUT space.  Our discussion applies  directly to $U(N)$ instantons,  so long as the instanton has generic holonomy at infinity, and indeed the constructions use our pieces of diagram of type (\ref {resolution-twistor-PQ-TN-1}) repeatedly, as well as one piece of type (\ref{define-pQ2}). In a sense, this is a hybrid of the results above and those already found in \cite{HurtubiseMurray}.

 {\it Multi-Taub-NUT space:} Even more generally, one can apply the result to $U(N)$ instantons on multi-Taub-NUT $\mathrm{TN}_k.$ The relevant compact space $X$ in this case is the blow-up of $\mathbb{P}^1\times\mathbb{P}^1$ at $k+1$ points. The resulting monad again consists of the blocks we already discussed in detail.  Namely, it contains $k$ blocks each of type (\ref{define-pQ2}), and $N$ of type (\ref {resolution-twistor-PQ-TN-1}).  Just as the monads above, the resulting monad is periodic, i.e. it has its top line identified with its bottom line.

Our approach is quite general, allowing one to further explore instantons on other base spaces, such as, for example, D$_k$ ALF space or ALG spaces.  The former should lead to a monad organized according to a D-type bow, while the latter should lead to an entirely new object, which might deserve to be called a ``sling''.  If bows give a useful generalization of quivers \cite{Nakajima:2016guo}, slings generalize quivers further and should also lead to important application in quantum gauge theory and in geometric representation theory.  Another direction for our approach is exploring instantons with more general structure group, which is significant for the geometric Langlands correspondence for complex surfaces \cite{Witten:2009at}.

\section*{Acknowledgements}
The work of SCh was partially supported by the Simons Foundation grant {\footnotesize\#}245643, and that of JH by NSERC grant  {\footnotesize\#} RGPIN-2015-04393. 
The authors are grateful to the Isaac Newton Institute for Mathematical Sciences, Cambridge, for support and hospitality during the programme Metric and Analytic Aspects of Moduli Spaces where part of the work on this paper was undertaken, supported by EPSRC grant no EP/K032208/1.  SCh thanks the Berkeley Center for Theoretical Physics for hospitality during the final stages of this work. 
The authors also thank  the
Banff International Research Station for hospitality during the workshop
The Analysis of Gauge-Theoretic Moduli Spaces where this work was completed.

\providecommand{\bysame}{\leavevmode\hbox to3em{\hrulefill}\thinspace}
\providecommand{\MR}{\relax\ifhmode\unskip\space\fi MR }
\providecommand{\MRhref}[2]{%
  \href{http://www.ams.org/mathscinet-getitem?mr=#1}{#2}
}
\providecommand{\href}[2]{#2}

  \end{document}